\documentclass[16pt]{amsart}
\usepackage{amssymb, latexsym, amsmath, amsfonts}

\newtheorem{thm}{Theorem}[section]
\newtheorem{cor}[thm]{Corollary}
\newtheorem{lem}[thm]{Lemma}
\newtheorem{prop}[thm]{Proposition}
\theoremstyle{definition}

\theoremstyle{remark}
\newtheorem{rem}[thm]{Remark}
\numberwithin{equation}{section}

\setlength{\oddsidemargin}{0in} \setlength{\evensidemargin}{0in}
\setlength{\textwidth}{6.3in} \setlength{\topmargin}{-0.2in}
\setlength{\textheight}{9.0in}

\newcommand{\mbf}{\mathbf}
\newcommand{\ra}{\rightarrow}
\newcommand{\z}{\zeta}
\newcommand{\pa}{\partial}
\newcommand{\ov}{\overline}
\newcommand{\sm}{\setminus}
\newcommand{\ep}{\epsilon}
\newcommand{\no}{\noindent}
\newcommand{\Om}{\Omega}
\newcommand{\cal}{\mathcal}
\newcommand{\ti}{\tilde}
\newcommand{\la}{\lambda}

\newcommand{\La}{\Lambda}
\newcommand{\al}{\alpha}
\newcommand{\be}{\beta}
\newcommand{\de}{\delta}
\newcommand{\sig}{\sigma}
\newcommand{\nab}{\nabla}
\newcommand{\om}{\omega}

\begin{document}
\title{Remarks on the metric induced by the Robin function}
\thanks{The second author was supported in part by a grant from UGC under DSA-SAP, Phase IV}
\subjclass{Primary: 32F45 ; Secondary : 31C10, 31B25}
\author{Diganta Borah and Kaushal Verma}
\address{Department of Mathematics,
Indian Institute of Science, Bangalore 560 012, India}
\email{diganta@math.iisc.ernet.in, kverma@math.iisc.ernet.in}

\begin{abstract}
Let $D$ be a smoothly bounded pseudoconvex domain in $\mbf C^n$, $n > 1$. Using $G(z, p)$, the Green function for $D$ with pole at $p \in D$ associated with
the standard sum-of-squares Laplacian, N. Levenberg and H. Yamaguchi had constructed a K\"{a}hler metric (the so-called $\La$--metric) using the Robin
function $\La(p)$ arising from $G(z, p)$. The purpose of this article is to study this metric by deriving its boundary asymptotics and using them to
calculate the holomorphic sectional curvature along normal directions. It is also shown that the $\La$-metric is comparable to the Kobayashi (and hence
to the Bergman and Carath\'{e}odory metrics) when $D$ is strongly pseudoconvex. The unit ball in $\mbf C^n$ is also characterized among all smoothly bounded
strongly convex domains on which the $\La$-metric has constant negative holomorphic sectional curvature. This may be regarded as a version of Lu-Qi
Keng's theorem for the Bergman metric.
\end{abstract}

\maketitle

\section{Introduction}

\no Let $D$ be a smoothly bounded domain in $\mbf C^n$, $n > 1$ with smooth defining function $\psi(z)$ so that $D = \{ \psi < 0\}$. For $p \in D$, let $G(z,
p)$ be the Green function for $D$ with pole at $p$ associated to the standard Laplacian
\[
\Delta = 4 \sum_{j = 1}^{n} \frac{\pa^2}{\pa z_j \pa \ov z_j}
\]
on $\mbf C^n \approx \mbf R^{2n}$. The notation for the Green function will be enhanced to $G_D(z, p)$ if the need to emphasize its dependence on $D$ arises.
Then $G(z, p)$ is the unique function satisfying $\Delta G(z, p) = 0$ on $D \sm \{p\}$, $G(z, p) \ra 0$ as $z
\ra \pa D$
and $G(z, p) - \vert z - p \vert^{-2n + 2}$ is harmonic near $p$. Let $H_p(z)$ be harmonic on $D$, continuous on $\ov D$ and such that
$H_p(z) = - \vert z - p \vert^{-2n + 2}$ for $z \in \pa D$. The existence of such a function is guaranteed by the solution to the Dirichlet problem and
\[
G(z, p) = \vert z - p \vert^{-2n + 2} + H_p(z)
\]
is then the Green function for $D$ with pole at $p$. Therefore
\[
\La(p) := \lim_{z \ra p} \Big( G(z, p) - \vert z - p \vert^{-2n + 2} \Big)
\]
exists for each $p \in D$ and is called the Robin constant at $p$ and the map $p \mapsto \La(p)$ is the Robin function for $D$. Said differently $\La(p)
= H_p(0)$. Hence it is possible to rewrite the expression for $G(z, p)$ as
\begin{equation}
G(z, p) = \vert z - p \vert^{-2n + 2} + \La(p) + h_p(z)
\end{equation}
where $h_p(z) = H_p(z) - H_p(0)$ is harmonic on $D$ and satisfies $h_p(p) = 0$ for all $p \in D$. The maximum principle implies that $\La(p) < 0$ for all $p
\in D$ and it was shown by Yamaguchi in \cite{Y} that $p \mapsto - \La(p)$ is a real analytic exhaustion function for $D$. The assertion that $\La(p)$ is
real analytic is entirely local and as such does not require any smoothness hypothesis on $\pa D$ while the latter statement that $-\La(p)$ is an exhaustion
only needs the domain to be one where the Dirichlet problem can be solved. These claims are therefore valid on a much larger class of bounded domains than the
smoothly bounded ones, the point here of course being that the smoothness assumption on $\pa D$ does not play any vital role so far. Further understanding of
the Robin function came through the work of Levenberg-Yamaguchi (\cite{LY}) where they derived explicit expressions for the complex Hessians of $-\La$
and $\log(-\La)$ on smoothly bounded domains using the technique of variation of domains. To describe this briefly, fix $p_0 \in D$ and $a \in \mbf C^n$
a non-zero tangent vector at $p_0$. Let $\Delta_{\rho} \subset \mbf C$ be a disc around the origin with radius $\rho > 0$ such that $p_0 + a t \in D$ for
all $t \in \Delta_{\rho}$. Consider the biholomorphism $T : \Delta_{\rho} \times D \ra \cal D = T(\Delta_{\rho} \times D)$ defined by
\[
T(t, z) = (t, z - a t).
\]
$\cal D$ is a locally trivial family of domains $D_t = T(\{t\} \times D)$ such that $p_0 \in D_t$ for all $t \in \Delta_{\rho}$. The question of
understanding the complex Hessian of say $-\La$ at $p_0$ along $a$, which tantamounts to varying the pole $p$ near $p_0$, is transformed by $T$ to a
situation where the domains vary (i.e., $t \mapsto D_t$) but the pole $p_0 \in D_t$ remains fixed. Let $\la(t)$ be the Robin constant for $D_t$ at $p_0$.
It is not difficult to see that $\la(t) = \La(p_0 + a t)$ and hence
\[
\sum_{\al, \be = 1}^n \frac{\pa^2 (- \La)}{\pa z_{\al} \pa \ov z_{\be}} (p_0) a_{\al} \ov a_{\be} = \frac{\pa^2 (- \la)}{\pa t \pa \ov t} (0).
\]
The problem now is to understand the variation of $\la(t)$ as a function of $t$ and this is addressed in \cite{LY}. A similar idea can be used to
describe the complex Hessian of $\log(-\La)$. To get a flavor of what's involved, the end results may be summarized as follows: let $p_0$ and $a$ be as
above. Then
\begin{multline}
\sum_{\al, \be = 1}^{n} \frac{\pa^2 (-\La)}{\pa z_{\al} \ov z_{\be}} (p_0) a_{\al} \ov a_{\be} = \frac{1}{(n - 1) \sig_{2n}} \int_{\pa D} K_2(z, a) \big
\vert \nab_z G(z, p_0) \big \vert^2 \; dS_z \; + \\
\frac{4}{(n - 1) \sig_{2n}} \int_{D} \sum_{\al = 1}^n \Big \vert \sum_{\be = 1}^n a_{\be} \frac {\pa}{\pa \ov z_{\al}} G_{\be}(z, p_0) \Big \vert^2 \; dV_z
\end{multline}
where $\sig_{2n}$ is the surface area of the unit sphere in $\mbf R^{2n}$ and $K_2(z, a)$ is a quantity whose non-negativity (or strict positivity) is
equivalent to $z \in \pa D$ being a point of pseudoconvexity (or strong pseudoconvexity respectively) -- $K_2(z, a)$ is in fact manufactured from the Levi
form of $\pa D$ and can be shown to be independent of the defining function $\psi$. Moreover $\nab_z G(z, p) = \big( \pa G / \pa z_1 (z, p), \pa G / \pa
z_2 (z, p), \ldots, \pa G / \pa z_n (z, p) \big)$ for $p \in D$ is well defined on $\ov D$ since the boundary is assumed to be smooth, and
\[
G_{\be}(z, p) = \left( \frac{\pa G}{\pa p_{\be}} + \frac{\pa G}{\pa z_{\be}} \right)(z, p)
\]
for $1 \le \be \le n$. The expression for the Hessian of $\log(-\La)$ is similar:
\begin{multline}
\sum_{\al, \be = 1}^n \frac{\pa^2 \log(-\La)}{\pa z_{\al} \pa \ov z_{\be}} (p_0) a_{\al} \ov a_{\be} = \frac{1}{(n - 1) \sig_{2n} \big( -\La(p_0) \big)} \int_{\pa D}
K_2(z, \cal O) \big \vert \nab_z G(z, p_0) \big \vert^2 \; dS_z \; + \\
\frac{4}{(n - 1) \sig_{2n} \big( -\La(p_0) \big)} \int_D \sum_{\be = 1}^n \Big \vert \frac{\pa}{\pa \ov z_{\be}} H(a, p_0, z) \Big \vert^2 \; dV_z
\end{multline}
where the vector $\cal O$ depends on $a, p_0$ and the first derivatives of $\La$ at $p_0$ and $H(a, p_0, z)$ depends on $G(z, p_0)$ and $G_{\be}(z, p_0)$
defined above. While the exact expressions for $K_2(z, .)$ and $H(a, p_0, z)$ may be found in \cite{LY} it will suffice for our purposes to observe that
the volume integral in both (1.2) and (1.3) is non-negative. Thus the complex Hessians of $-\La$ and $\log(-\La)$ are bounded from below by quantities that
depend on $K_2(z, .)$. Since $D$ is smoothly bounded, the point that is farthest from, say the origin must be strongly pseudoconvex and
hence $K_2(z, .) > 0$ on a non-empty open piece of $\pa D$. If in addition it is assumed that $D$ is pseudoconvex then $K_2(z, .) \ge 0$ everywhere on
$\pa D$. It follows that $-\La$ and $\log(-\La)$ are both real analytic strongly plurisubharmonic exhaustions on smoothly bounded pseudoconvex domains.
Therefore on such a domain $\om_R  = i \pa \ov \pa \log(-\La)$ is the fundamental form of a K\"{a}hler metric that is induced by the Robin function. This
is the so-called $\La$--metric and is given by
\[
ds^2_z = \sum_{\al, \be = 1}^n \frac{\pa^2 \log(-\La)}{\pa z_{\al} \pa \ov z_{\be}} (z)
dz^{\al} \otimes d \ov z^{\be}.
\]
As usual we let $g_{\al \ov \be}(z) = \pa^2 \log(-\La) / \pa z_{\al} \pa \ov z_{\be} (z)$ for $1 \le \al, \be \le n$.
An interesting alternative approach for proving the plurisubharmonicity of $-\La$ was developed by Berndtsson in \cite{Ber}.

\medskip

As an example take $D = B(0, r)$ the euclidean ball of radius $r > 0$ in $\mbf C^n$. For $p \in B(0, r) \sm \{0\}$
\[
G_D(z, p) = \vert z - p \vert^{-2n + 2} - \big( r / \vert p \vert \big)^{2n - 2} \vert z - p^{\ast} \vert^{-2n + 2}
\]
where $p^{\ast} = \big(r / \vert p \vert \big)^2 p$ is the point that is symmetric to $p$ with respect to $\pa B(0, r)$. When $p$ is the origin the Green function
is $\vert z \vert^{-2n + 2} - 1$. The Robin function is
\[
\La(p) = - \big( r / \vert p \vert \big)^{2n - 2} \vert p - p^{\ast} \vert^{-2n + 2} = - r^{2n - 2} \big( r^2 - \vert p \vert^2 \big)^{-2n + 2} < 0
\]
for all $p \in B(0, r)$ and this looks like the Bergman kernel for the ball of radius $r > 0$ restricted to the diagonal albeit with a different exponent.
Therefore the $\La$--metric is, up to a constant, the Bergman metric in this case.

\medskip

\no The other example to consider is a half space given by $H = \big\{ z \in \mbf C^n : 2 \Re \big( \sum_{\al = 1}^n a_{\al} z_{\al} \big) - c < 0 \big\}$
where $a_{\al} \in \mbf C$ and $c$ is a real constant. For $p \in H$,
\[
G_H(z, p) = \vert z - p \vert^{-2n + 2} - \vert z - p^{\ast} \vert^{-2n + 2}
\]
where $p^{\ast}$ is the point that is symmetric to $p$ with respect to the real hyperplane $\pa H$. Hence
\begin{equation}
\La_H(p) = - \vert p - p^{\ast} \vert^{-2n + 2} = -\big( 2 \; {\rm dist}(p, \pa H) \big)^{-2n + 2} = \frac{ - \sum_{\al = 1}^n \vert a_{\al} \vert^{2n -
2}}{\Big(2 \Re \big(\sum_{\al = 1}^n a_{\al} p_{\al} \big) - c \Big)^{2n  - 2}} < 0.
\end{equation}
When $H = \{z \in \mbf C^n : \Re z_n < 0\}$, the Robin function $\La_H(p) = -2^{-2n + 2} (p_n + \ov p_n)^{-2n + 2}$ which evidently implies that $\om_R$ is
positive semi-definite and therefore the corresponding metric is degenerate.

\medskip

\no For an arbitrary smoothly bounded pseudoconvex domain the $\La$--metric is defined by a global potential namely $\log(-\La)$ and it is only natural to
hope that getting a hold on finer properties of $\La$ would yield more information about the metric. To this end, boundary asymptotics of $\La$ and its
derivatives up to order $3$ were computed in \cite{LY} in terms of the defining function $\psi$. As a consequence it was shown that the $\La$--metric is
complete on strongly pseudoconvex or smoothly bounded convex domains. For multi-indices $A = (\al_1, \al_2, \ldots, \al_n), B = (\be_1, \be_2, \ldots,
\be_n) \in \mbf N^n$ let
\[
D^{A} = \frac{{\pa}^{\vert A \vert}}{\pa z_1^{\al_1} \pa z_2^{\al_2} \cdots \pa z_n^{\al_n}}\; {\rm and} \; D^{\ov B} =  \frac{{\pa}^{\vert B
\vert}}{\pa \ov z_1^{\be_1} \pa \ov z_2^{\be_2} \cdots \pa \ov z_n^{\be_n}}
\]
and let $D^{A \ov B} = D^{A} D^{\ov B}$. Recall that a domain is called regular if there is a subharmonic barrier at each of its boundary points
in which case the Perron subsolution to the Dirichlet problem extends continuously to the boundary.

\begin{thm}
Let $D \subset \mbf C^n$ be a bounded regular domain and $\La$ the associated Robin function. Let $\Gamma \subset \pa D$ be a $C^2$-smooth open piece and
fix $z_0 \in \Gamma$. Suppose that $\psi$ is a $C^2$-smooth local defining function for $\Gamma$ near $z_0$. Define the half space
\[
\cal H = \bigg \{ z \in \mbf C^n : 2 \Re \Big( \sum_{\al = 1}^n \frac{\pa \psi}{\pa z_{\al}}(z_0) z_{\al} \Big) - 1 < 0 \bigg \}
\]
and let $\La_{\cal H}$ denote the Robin function associated to $\cal H$. Then for all multi-indices $A, B \in \mbf N^n$ and $z \in D$
\[
(-1)^{\vert A \vert + \vert B \vert} \big( D^{A \ov B} \La(z) \big) \big( \psi(z) \big)^{2n - 2 + \vert A \vert + \vert B \vert} \ra D^{A \ov B}
\La_{\cal H}(0)
\]
as $z \ra z_0$.
\end{thm}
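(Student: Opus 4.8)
The plan is to reduce the statement to a stability result for Robin functions under an affine rescaling that blows $D$ up at $z_0$ to its tangent half space. The algebraic input is the behaviour of $\La$ under dilation: from the three defining properties of $G$ one checks $G_{\la D}(\la w,\la p)=\la^{-2n+2}G_D(w,p)$, and letting $w\to p$, $\La_{\la D}(\la p)=\la^{-2n+2}\La_D(p)$; since $\pa/\pa(\la w)=\la^{-1}\pa/\pa w$ and translations of the pole change nothing, this gives $(D^{A\ov B}\La_{\la D})(\la p)=\la^{-2n+2-|A|-|B|}(D^{A\ov B}\La_D)(p)$ for all multi-indices. For $z$ near $z_0$ let $\pi=\pi(z)\in\Gamma$ be the nearest boundary point (well defined and $C^1$ in a tubular neighbourhood of the $C^2$ piece $\Gamma$) and put $\de=-\psi(z)=|\psi(z)|>0$. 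Let $\Phi_\de$ be the translation sending $\pi$ to the origin followed by the dilation $w\mapsto w/\de$, set $D_\de=\Phi_\de(D)$ and $q_\de=\Phi_\de(z)=(z-\pi)/\de$. The scaling law collapses the weighted quantity exactly: $(-1)^{|A|+|B|}(D^{A\ov B}\La(z))\psi(z)^{2n-2+|A|+|B|}=(D^{A\ov B}\La_{D_\de})(q_\de)$, so the theorem becomes a statement about the limit of the right side.

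Blowing up at the foot $\pi$ rather than at $z_0$ keeps the evaluation point bounded even for tangential approach. Since $z-\pi$ is normal to $\pa D$ at $\pi$ and $|z-\pi|=\mathrm{dist}(z,\pa D)$, the point $q_\de$ lies on the inner normal at $\pi$ and, as $z\to z_0$, converges to a definite point $\ti q$ on the normal at $z_0$; moreover $\psi_\de(q_\de):=\de^{-1}\psi(\pi+\de q_\de)=\de^{-1}\psi(z)=-1$. A $C^2$ Taylor expansion at $\pi$, using $\psi(\pi)=0$ and the continuity of the first derivatives, shows the rescaled defining functions $\psi_\de(w)=\de^{-1}\psi(\pi+\de w)$ converge in $C^2_{loc}$ to the affine $\psi_0(w)=2\Re(\sum_\al a_\al w_\al)$, $a_\al=(\pa\psi/\pa z_\al)(z_0)$; hence $D_\de$ converges to the half space $\ti{\cal H}=\{\psi_0<0\}$ and $\ti q\in\{\psi_0=-1\}$. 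On the model side (1.4) shows $\La_{\ti{\cal H}}$ depends on the pole only through $u(p)=2\Re(\sum_\al a_\al p_\al)$, so every $D^{A\ov B}\La_{\ti{\cal H}}(p)=f^{(|A|+|B|)}(u(p))\,a^A\ov a^B$ is constant on hyperplanes parallel to $\pa\ti{\cal H}$. Comparing the parallel half spaces $\ti{\cal H}=\{u<0\}$ and $\cal H=\{u<1\}$ and matching the level $u=-1$ of the former with the point $0$ (where $u=0$) of the latter, a one-variable identity gives $D^{A\ov B}\La_{\ti{\cal H}}(\ti q)=D^{A\ov B}\La_{\cal H}(0)$; this is exactly the purpose of the constant $-1$ and of evaluating at the origin.

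It remains to show $(D^{A\ov B}\La_{D_\de})(q_\de)\to(D^{A\ov B}\La_{\ti{\cal H}})(\ti q)$, which I view as the heart of the matter. I would work with the harmonic part $H^\de_q(w)=G_{D_\de}(w,q)-|w-q|^{-2n+2}$, which is harmonic and real analytic in $w$ across the former pole, satisfies $\La_{D_\de}(q)=H^\de_q(q)$, and equals $-|w-q|^{-2n+2}$ on $\pa D_\de$; thus $D^{A\ov B}_q\La_{D_\de}(q)$ is a universal combination of mixed $(w,q)$ derivatives of $H^\de_q$ on the diagonal. Each $D_\de$ being bounded, the maximum principle bounds $\{H^\de_q\}$ uniformly on compacta for $q$ near $\ti q$ (the data is controlled since $q$ stays at positive distance from $\pa D_\de$), making it a normal family; any locally uniform limit is harmonic on $\ti{\cal H}$, and a barrier argument identifies its boundary values as $-|w-q|^{-2n+2}$. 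Here the hypotheses enter: regularity of $D$ gives solvability of the Dirichlet problem, while the $C^2$-smoothness of $\Gamma$ yields uniform barriers along $\Phi_\de(\Gamma)\to\pa\ti{\cal H}$; the images of $\pa D\sm\Gamma$ recede to infinity, where the decaying data is harmless. By uniqueness of the bounded harmonic function on the half space with these values and decay at infinity, the limit is $H^{\ti{\cal H}}_q=-|w-q^\ast|^{-2n+2}$, so the whole family converges locally uniformly, and interior estimates promote this to convergence of all $w$-derivatives. For the pole derivatives I would note that $\pa H^\de_q/\pa q_j$ is again harmonic in $w$ with boundary data $\pa_{q_j}(-|w-q|^{-2n+2})$ and rerun the argument, iterating to get joint convergence of all mixed $(w,q)$ derivatives, uniformly for $q$ near $\ti q$. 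Restricting to the diagonal and evaluating at $q=q_\de\to\ti q$ yields the claim. The real work, and the main obstacle, lies in making the boundary convergence of the harmonic parts uniform—constructing the uniform barriers near $\Phi_\de(\Gamma)$ and controlling the receding far boundary—and then propagating that uniformity through each successive differentiation in the pole.
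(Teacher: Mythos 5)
Your proposal is correct and, at its core, is the same argument as the paper's: affine blow-up by the factor $-\psi(\cdot)$, the homogeneity law $(D^{A\ov B}\La_{\la D})(\la p)=\la^{-2n+2-|A|-|B|}(D^{A\ov B}\La_D)(p)$ to convert the weighted limit into a stability statement for the Robin functions of the rescaled domains, Hausdorff convergence of those domains to the tangent half space, and identification of the limiting harmonic parts by normal families, barriers and the maximum principle. The execution differs in three places. (i) You dilate about the foot point $\pi(z)$, so your model is $\{u<0\}$ with pole on the level set $u=-1$, and you must (and correctly do, via (1.4)) match this with $\La_{\cal H}(0)$; the paper dilates about the interior point $z_j$ itself, so the pole is always the origin and the constant $-1$ is built into $\cal H$, making that matching step unnecessary. (ii) You control the influence of $\pa D\sm\Gamma$ by splitting the Dirichlet data and using its uniform smallness on the receding far boundary; the paper instead localizes once with the comparison $G_{U\cap D}\le G_D\le G_{U\cap D}+C$ (Proposition 3.1), whose constant becomes $C\,(\psi(z_j))^{2n-2}\to 0$ after scaling. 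Both work. (iii) The substantive difference is how derivatives are handled: you propose to differentiate the Dirichlet problem repeatedly in the pole variable and rerun the barrier argument at each order, uniformly in the pole --- precisely the step you flag as the main obstacle. The paper sidesteps this with the double Poisson integral representation in Lemma 2.2, which expresses $\La_j(p)$ as an integral of $H_j(z,w)=G_j(z,w)-|z-w|^{-2n+2}$ against an explicit kernel, smooth in $p$, over the fixed compact set $\pa U\times\pa U$ for a ball $U\subset\subset\cal H$; once $H_j\to H_{\infty}$ uniformly on compact subsets of $\cal H\times\cal H$ (Proposition 3.6, built on the boundary estimate of Proposition 3.5 and a maximum-principle argument across the diagonal), convergence of every $D^{A\ov B}\La_j(0)$ follows simply by differentiating under the integral sign. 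Your iteration can be made rigorous (difference quotients of the data $-|w-q|^{-2n+2}$ converge uniformly on $\pa D_\de$ and decay at infinity, so the maximum principle passes each pole derivative to the limit), but adopting the representation formula collapses all orders of differentiation into a single uniform convergence statement and removes the obstacle you identify.
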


\no This provides information about the boundary asymptotics of all derivatives of $\La$ since $D^{\al \ov \be}
\La_{\cal H}(0)$ can be explicitly computed for all multi-indices $A, B$ using (1.4) and in particular yields the following result from \cite{LY}.

\begin{cor}
Let $D \subset \mbf C^n$ be a $C^{\infty}$-smoothly bounded pseudoconvex domain (in which case the $\La$--metric is well defined) and fix $z_0 \in \pa D$. Then for $z
\in D$ and $1 \le \al, \be \le n$,
\[
g_{\al \ov \be}(z) \big( \psi(z) \big)^2 \ra (2n - 2) \frac{\pa \psi}{\pa z_{\al}}(z_0) \; \frac{\pa \psi}{\pa \ov z_{\be}}(z_0)
\]
as $z \ra z_0$. Moreover for $1 \le \al, \be, \gamma \le n$
\[
\frac{\pa^3 \La}{\pa z_{\al} \pa \ov z_{\be} \pa z_{\gamma}}(z) \big( \psi(z) \big)^{2n + 1} \ra 2n(2n - 1)(2n - 2) \; \frac{\pa \psi}{\pa z_{\al}}(z_0)
\; \frac{\pa \psi}{\pa \ov z_{\be}}(z_0) \frac{\pa \psi}{\pa \ov z_{\gamma}}(z_0) \; \big \vert \nab \psi(z_0) \big \vert^{2n - 2}
\]
as $z \ra z_0$.
\end{cor}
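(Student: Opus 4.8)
The plan is to read off both limits from Theorem 1.1 by feeding it suitable multi-indices and then differentiating the explicit half-space Robin function at the origin. The hypotheses are in force: a $C^\infty$-smoothly bounded pseudoconvex domain is regular, each boundary point is $C^2$, and a global defining function $\psi$ is in particular a local one near $z_0$, so we may take $\Gamma$ to be a boundary neighbourhood of $z_0$. With $a_\al = \pa\psi/\pa z_\al(z_0)$ and $c=1$, equation (1.4) identifies the relevant half space $\cal H$ and gives
\[
\La_{\cal H}(p) = -\Big( \sum_{\al} |a_\al|^2 \Big)^{n-1} \Big( 2\Re\big(\sum_{\al} a_\al p_\al\big) - 1 \Big)^{-(2n-2)},
\]
i.e. a negative constant times a power of the real affine function $\phi(p) = 2\Re(\sum_\al a_\al p_\al) - 1$, for which $\pa\phi/\pa z_\al \equiv a_\al$, $\pa\phi/\pa\ov z_\be \equiv \ov a_\be$ and $\phi(0) = -1$. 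Every derivative of $\La_{\cal H}$ at $0$ is thus a numerical coefficient times a monomial in the $a_\al$'s and $\ov a_\be$'s.

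The second limit I would obtain by a single application of Theorem 1.1, with $A$ chosen so that $D^A = \pa^2/\pa z_\al\pa z_\gamma$ and $B$ so that $D^{\ov B} = \pa/\pa\ov z_\be$; then $|A|+|B|=3$ and the exponent is $2n-2+3 = 2n+1$. Differentiating $\La_{\cal H}$ thrice, each application of $\pa/\pa z$ or $\pa/\pa\ov z$ lowers the power of $\phi$ by one and produces successively the factors $-(2n-2)$, $-(2n-1)$, $-2n$ together with the attendant $a_\al$, $a_\gamma$, $\ov a_\be$. Evaluating at $p=0$ (where $\phi=-1$) and inserting the sign $(-1)^3$ from the theorem yields the coefficient $2n(2n-1)(2n-2)$, the product of first derivatives of $\psi$ at $z_0$, and the prefactor $(\sum_\al|a_\al|^2)^{n-1}$; the latter is $|\nab\psi(z_0)|^{2n-2}$ once $\sum_\al|\pa\psi/\pa z_\al|^2$ is rewritten through $|\nab\psi|^2$.

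The first limit is the substantive one, since $g_{\al\ov\be} = \pa^2\log(-\La)/\pa z_\al\pa\ov z_\be$ differentiates $\log(-\La)$, not $\La$. I would expand by the chain rule,
\[
g_{\al\ov\be} = \frac{\La_{\al\ov\be}}{\La} - \frac{\La_\al\,\La_{\ov\be}}{\La^2},
\]
writing $\La_\al = \pa\La/\pa z_\al$, $\La_{\ov\be} = \pa\La/\pa\ov z_\be$, $\La_{\al\ov\be} = \pa^2\La/\pa z_\al\pa\ov z_\be$, and then invoke Theorem 1.1 for $|A|+|B| = 0,1,2$ to get $\La\,\psi^{2n-2}\ra\La_{\cal H}(0)$, $\La_\al\,\psi^{2n-1}\ra -\pa\La_{\cal H}/\pa z_\al(0)$, $\La_{\ov\be}\,\psi^{2n-1}\ra -\pa\La_{\cal H}/\pa\ov z_\be(0)$, and $\La_{\al\ov\be}\,\psi^{2n}\ra \pa^2\La_{\cal H}/\pa z_\al\pa\ov z_\be(0)$. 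Because $\La<0$ on $D$, one may divide freely, and the powers of $\psi$ are calibrated so that the two terms survive after multiplying by $\psi^2$: indeed $(\La_{\al\ov\be}/\La)\,\psi^2 = (\La_{\al\ov\be}\psi^{2n})/(\La\psi^{2n-2})$ and $(\La_\al\La_{\ov\be}/\La^2)\,\psi^2 = (\La_\al\psi^{2n-1})(\La_{\ov\be}\psi^{2n-1})/(\La\psi^{2n-2})^2$. Substituting the four limits, the prefactor $(\sum_\al|a_\al|^2)^{n-1}$ cancels and the two contributions become $(2n-2)(2n-1)\,a_\al\ov a_\be$ and $(2n-2)^2\,a_\al\ov a_\be$; their difference is $(2n-2)\,a_\al\ov a_\be = (2n-2)(\pa\psi/\pa z_\al)(z_0)(\pa\psi/\pa\ov z_\be)(z_0)$, as claimed.

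The main obstacle is exactly this cancellation. Both $\La_{\al\ov\be}/\La$ and $\La_\al\La_{\ov\be}/\La^2$ blow up like $\psi^{-2}$, with leading coefficients agreeing except for the factor $(2n-1)$ versus $(2n-2)$, so the finite value of $g_{\al\ov\be}\psi^2$ appears only after the subtraction. The care needed is to confirm that the error terms implicit in the four convergences of Theorem 1.1 remain of lower order once divided by the relevant power of $\La$, and so do not pollute the $\psi^{-2}$-coefficient; this is ensured because each assertion in the theorem is a genuine limit and $\La\psi^{2n-2}$ tends to the nonzero constant $\La_{\cal H}(0)$.
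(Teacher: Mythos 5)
Your proof is correct and is essentially the paper's own argument: the paper derives Corollary 1.2 exactly this way, computing the derivatives of $\La_{\cal H}$ at $0$ from (1.4), invoking Theorem 1.1 for $|A|+|B| = 0,1,2,3$, and combining the limits through $g_{\al\ov\be} = \La_{\al\ov\be}/\La - \La_\al\La_{\ov\be}/\La^2$ with the powers of $\psi$ distributed so that each factor converges (this computation is written out in normalized form in Lemma 5.2 of Section 5). Note also that you correctly read the numerator in (1.4) as $\big(\sum_\al |a_\al|^2\big)^{n-1} = \vert\nab\psi(z_0)\vert^{2n-2}$ (the paper's printed $\sum_\al |a_\al|^{2n-2}$ is a typo), and your holomorphic factor $\psi_\gamma(z_0)$ in the third-derivative limit is the correct one, matching the differentiation type.
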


\no Invariant metrics such as those of Bergman, Carath\'{e}odory and Kobayashi play an important role in understanding the geometry of a given domain and
their equivalence on smoothly bounded strongly pseudoconvex domains is well known. While it is not clear to us whether the $\La$--metric is invariant under
biholomorphisms, the question of how it compares with one of these canonical metrics seems natural to consider. Let $D \subset \mbf C^n$ be a $C^{\infty}$-smoothly bounded domain and fix $z_0 \in \pa D$. Let $\psi$ be a $C^{\infty}$-smooth defining function for $D$. At each point $p \in \pa D$ there is a canonical splitting
$\mbf C^n = H_p(\pa D) \oplus N_p(\pa D)$ along the complex tangential and normal directions at $p$. A vector $v \in \mbf C^n$ regarded as a tangent vector at $p \in \pa D$ can therefore be uniquely written as $v = v_H(p) + v_N(p)$ where $v_H(p) \in H_p(\pa D)$ and $v_N(p) \in N_p(\pa D)$. The smoothness of $\pa D$ implies that if $z \in D$ is close enough to $\pa D$, there is then a unique point $\pi(z) \in \pa D$ that is closest to it, i.e., $\delta(z) = {\rm dist}(z, \pa D) = \vert z - \pi(z) \vert$. Regarding $v$ as a vector at $z$, we will write $v = v_H(\pi(z)) + v_N(\pi(z))$ and abbreviate them as $v_H(z), v_N(z)$ respectively. Let $\cal L_{\psi}(p)$ be the hermitian matrix $\big( \pa^2 \psi / \pa z_{\al} \pa \ov z_{\be} (p) \big)$. The Levi form is
\[
\langle \cal L_{\psi}(p)v, w \rangle = \sum_{\al, \be = 1}^n \frac{\pa^2 \psi}{\pa z_{\al} \pa \ov z_{\be}}(p) v^{\al} \ov w^{\be}
\]
for $p \in D$, $v = (v^1, v^2, \ldots, v^n), w = (w^1, w^2, \ldots, w^n) \in \mbf C^n$, and where $\langle \cdot, \cdot \rangle$ is the standard hermitian inner
product on $\mbf C^n$. When $v = w$, the Levi form will be denoted by $\cal L_{\psi}(p, v)$ for the sake of brevity. Further let $F^R_D(z, v) = \big( ds^2_z(v, v)
\big)^{1/2}$ be the length of the tangent vector $v$ at $z \in D$ in the $\La$--metric.

\begin{thm}
Let $D$ be a $C^{\infty}$-smoothly bounded pseudoconvex domain in $\mbf C^n$ and fix $z_0 \in \pa D$. Suppose that $\psi$ is a $C^{\infty}$-smooth defining function
for $D$. Then for $z \in D$ and $v \in \mbf C^n$,
\begin{enumerate}
\item[(i)] $\lim_{z \ra z_0} F^R_D \big( z, v_N(z) \big) \, \big( -\psi(z) \big) = (2n - 2)^{1/2} \; \big\vert v_N(z_0) \big \vert \, \big\vert \psi(z_0) \big \vert$ and
\item[(ii)] $\lim_{z \ra z_0} F^R_D \big( z, v_H(z) \big) \, \big( -\psi(z) \big)^{1/2} = (2n - 2)^{1/2} \; \Big( \cal L_{\psi} \big( z_0, v_H(z_0) \big) \Big)^{1/2}$
\end{enumerate}
where the limits are uniform for $v$ in a compact subset of $\mbf C^n$.
\end{thm}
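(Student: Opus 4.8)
The plan is to read off both limits from the Hessian asymptotics already recorded in Corollary 1.2, after first isolating the algebraic role of the splitting $v = v_H(z) + v_N(z)$. Since $\om_R = i\pa\ov\pa\log(-\La)$, a direct differentiation gives $g_{\al\ov\be} = \La_{\al\ov\be}/\La - \La_\al\La_{\ov\be}/\La^2$, so that $F^R_D(z,v)^2 = \sum_{\al,\be} g_{\al\ov\be}(z)\,v^\al\ov v^\be$. Two elementary facts about the splitting will be used repeatedly. First, the complex normal space $N_p(\pa D)$ is the line spanned by $\ov{\nab\psi(p)} = \big( \pa\psi/\pa\ov z_1,\ldots,\pa\psi/\pa\ov z_n \big)(p)$, so a normal vector obeys $\big|\sum_\al (\pa\psi/\pa z_\al)(p)\,v_N^\al\big| = |v_N|\,|\nab\psi(p)|$. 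Second, by definition the tangential part satisfies the exact relation $\sum_\al (\pa\psi/\pa z_\al)(\pi(z))\,v_H^\al(z) = 0$, with the gradient taken at the foot point $\pi(z)$ rather than at $z_0$; this identity is the crucial input for (ii).

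For part (i) the leading term of the Hessian suffices. Corollary 1.2 gives $g_{\al\ov\be}(z)\,\psi(z)^2 \ra (2n-2)(\pa\psi/\pa z_\al)(z_0)\,(\pa\psi/\pa\ov z_\be)(z_0)$ uniformly, while $v_N(z) = v_N(\pi(z)) \ra v_N(z_0)$. Contracting and invoking the normal-vector identity at $p = z_0$,
\[
\psi(z)^2\,F^R_D\big(z,v_N(z)\big)^2 \ra (2n-2)\Big|\sum_\al (\pa\psi/\pa z_\al)(z_0)\,v_N^\al(z_0)\Big|^2 = (2n-2)\,|v_N(z_0)|^2\,|\nab\psi(z_0)|^2,
\]
and taking square roots yields (i), the limiting constant being $(2n-2)^{1/2}\,|v_N(z_0)|\,|\nab\psi(z_0)|$.

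Part (ii) is the substantive point, because the leading coefficient $(2n-2)(\pa\psi/\pa z_\al)(z_0)(\pa\psi/\pa\ov z_\be)(z_0)$ is a rank one Hermitian form whose contraction against a genuine tangent vector vanishes; Corollary 1.2 therefore only shows $\psi(z)^2 F^R_D(z,v_H(z))^2 \ra 0$, and the assertion lives one order deeper. The plan is to expand $\La,\La_\al,\La_{\al\ov\be}$ via Theorem 1.1 and feed them into $g_{\al\ov\be} = \La_{\al\ov\be}/\La - \La_\al\La_{\ov\be}/\La^2$, retaining the term of order $1/(-\psi)$. The mechanism that makes this work is the exact tangency relation: once the rank one leading part is written with the gradient evaluated at $\pi(z)$ instead of $z_0$, the relation $\sum_\al (\pa\psi/\pa z_\al)(\pi(z))\,v_H^\al(z) = 0$ annihilates its contraction against $v_H(z)$ identically, removing the spurious blow up that a crude use of $z_0$-data would produce along tangential approaches. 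What remains is a genuinely second order quantity, and the task is to show it equals $(2n-2)\,\cal L_\psi\big(z_0, v_H(z_0)\big)/(-\psi(z))$: a direct computation should show that the holomorphic Hessian $\pa^2\psi/\pa z_\al\pa z_\be$ cancels out and only the Levi form $\pa^2\psi/\pa z_\al\pa\ov z_\be$ persists, with the same constant $2n-2$ as in the normal case.

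The main obstacle is precisely this passage to second order. Reading off the subleading coefficient of $g_{\al\ov\be}$ requires more than the bare leading-order statement of Theorem 1.1: one needs the expansions of $\La$ and of its first and second derivatives to one further order and with control that is uniform in the base point, so that replacing $z_0$-data by $\pi(z)$-data carries an $o(1)$ error and the cancellation of the rank one part against $\La_\al\La_{\ov\be}/\La^2$ is legitimate as $z\ra z_0$ along arbitrary directions, tangential ones included. A convenient way to organize the computation is to normalize by a translation and a unitary map, under which $G$, $\La$ and hence the $\La$-metric are invariant, so that $z_0 = 0$ and $\ov{\nab\psi(0)}$ points along $e_n$; then $\pi(z) = z_0$ along the inner normal, where the tangency relation holds exactly and the limit can be evaluated cleanly, after which the uniformity of the expansions promotes it to an approach-independent limit and delivers the stated uniformity for $v$ in compact subsets of $\mbf C^n$.
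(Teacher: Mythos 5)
Your part (i) is correct and, as you set it up, it is a genuinely softer argument than the paper's: contracting the scalar limits $g_{\al \ov \be}(z)\,\psi(z)^2 \ra (2n-2)\,\psi_{\al}(z_0)\,\psi_{\ov \be}(z_0)$ of Corollary 1.2 against $v_N(z) \ra v_N(z_0)$ gives the normal asymptotics at once, with uniformity on compact sets of $v$ automatic because only finitely many scalar limits are involved. (The paper instead derives (i) from an expansion in terms of $\la = \La\,\psi^{2n-2}$, where the constant $2n-2$ emerges from the cancellation $(2n-2)(2n-1)-(2n-2)^2$; your route bypasses that, since the cancellation is already baked into Corollary 1.2.)

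Part (ii), however, has a genuine gap, and you half-name it yourself without closing it. Theorem 1.1 and Corollary 1.2 pin down only the leading coefficient: $g_{\al \ov \be}(z)\,\psi(z)^2 = (2n-2)\,\psi_{\al}(z_0)\,\psi_{\ov \be}(z_0) + o(1)$, with no rate whatsoever on the $o(1)$. When you contract against $v_H(z)$ and divide by $(-\psi)$, even if the rank-one part is annihilated, the error contributes $o(1)/(-\psi(z))$, which is uncontrolled --- the $o(1)$ could decay like $1/\log (1/\vert \psi(z) \vert)$ and this term would blow up. So ``retaining the term of order $1/(-\psi)$'' is not something Theorem 1.1 licenses: one needs the error in Corollary 1.2 to be $O(\psi)$, i.e., a true second-order expansion of $\La, \La_{\al}, \La_{\al \ov \be}$ near $\pa D$, uniform in the boundary base point (your proposed rewriting of the rank-one part with data at $\pi(z)$ rather than $z_0$ requires exactly this uniformity; with $z_0$-data alone the contraction against $v_H(z)$ is only $O(\vert z - z_0\vert^2)$, which divided by $\delta(z)$ need not vanish for tangential approach). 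You state that this stronger input is needed, but you never produce it, and your closing step --- evaluate along the inner normal and let ``the uniformity of the expansions'' promote the limit to arbitrary approach --- is circular, since that uniform expansion is precisely what is missing. The paper is explicit that it cannot be extracted from Theorem 1.1: it observes that Theorem 1.1 only yields $\la_{\al}(z)\,\psi(z) \ra 0$, which does not even show that $\la_{\al}$ extends continuously to $\ov D$. The ingredient the paper actually uses is the Levenberg--Yamaguchi theorem that $\la = \La\,\psi^{2n-2}$ is $C^2$-smooth on $\ov D$ and nonvanishing there; substituting $\La = \la\,\psi^{-2n+2}$ (Lemma 4.1) then writes $g_{\al \ov \be}$ with boundary-regular coefficients, the Levi form emerging from the single term $-(2n-2)\,\psi^{-1} \cal L_{\psi}(z, v)$, while Proposition 4.2, $\vert \langle v_H(z), \ov \pa \psi(z) \rangle \vert \lesssim \delta(z)$, kills the remaining singular terms for tangential vectors. (A minor further point: the holomorphic Hessian $\psi_{\al \be}$ whose cancellation you anticipate never enters at all, since $g_{\al \ov \be}$ involves only mixed derivatives.) Without an input of the strength of $\la \in C^2(\ov D)$, your argument for (ii) does not go through.
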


\no It will be seen from the proof of this theorem that pseudoconvexity plays no role in deriving these asymptotics; its presence (as far as this statement is
concerned) merely serves to guarantee that the $\La$--metric is well defined. Moreover since $\vert \psi(z) \vert \approx \delta(z)$ for $z$ close to $\pa D$, (i) and (ii)
above provide the asymptotic rate at which the $\La$--metric blows up in the normal and tangential directions. When $D$ is strongly pseudoconvex, there is a
uniform positive lower bound on the Levi form and hence these asymptotics are essentially the same as those obtained by Graham for the Kobayashi
metric on a strongly pseudoconvex domain (see \cite{G}). This suggests that the $\La$-metric and the Kobayashi metric must have similar behaviour even globally
on such domains. Let $d_R(p, q)$ denote the distance in the $\La$-metric between $p, q$ in a given domain $D$. Likewise let $d_K(p, q), d_C(p, q)$ and
$d_B(p, q)$ be the distance between $p, q$ in the Kobayashi, the Carath\'{e}odory and the Bergman metrics respectively on $D$.
\begin{thm}
Let $D$ be a $C^{\infty}$-smoothly bounded strongly pseudoconvex domain in $\mbf C^n$. Then there exists a constant $C \ge 1$ such that
\[
C^{-1} \; d_K(p, q) \le d_R(p, q) \le C \; d_K(p, q)
\]
for all $p, q \in D$. The same holds with $d_K(p, q)$ replaced by $d_C(p, q)$ or $d_B(p, q)$ with a possibly different $C$.
\end{thm}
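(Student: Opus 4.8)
The plan is to establish the comparability of $d_R$ with $d_K$ by combining the local boundary asymptotics from Theorem 1.3 with standard localization and completeness arguments. The key insight is that Theorem 1.3 already shows the infinitesimal $\Lambda$-metric $F^R_D(z,v)$ has the same blow-up rate near $\partial D$ as Graham's asymptotics for the Kobayashi metric $F^K_D(z,v)$: the normal direction blows up like $\delta(z)^{-1}$ and the tangential direction like $\delta(z)^{-1/2}$, with the tangential coefficient governed by the Levi form. On a strongly pseudoconvex domain the Levi form is uniformly positive definite, so there is a uniform two-sided comparison $C^{-1} F^K_D(z,v) \le F^R_D(z,v) \le C\,F^K_D(z,v)$ for all $z$ in a one-sided neighborhood of $\partial D$ and all $v \in \mbf C^n$.

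**First I would** make this infinitesimal comparison precise near the boundary. Writing $v = v_H(z) + v_N(z)$ and using that both metrics split (up to bounded cross terms) into the tangential and normal contributions, I would argue that the ratio $F^R_D(z,v)/F^K_D(z,v)$ is bounded above and below by positive constants uniformly for $z$ in a boundary collar $U = \{z \in D : \delta(z) < \ep_0\}$, by compactness of $\pa D$ and uniformity of the limits in Theorem 1.3. On the remaining compact core $K = D \sm U$, both metrics are continuous and strictly positive (the $\Lambda$-metric is a genuine metric since $\log(-\Lambda)$ is strongly plurisubharmonic, and the Kobayashi metric is continuous and positive on the relatively compact interior), so their ratio is again bounded between positive constants there. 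Patching these two regions yields a global infinitesimal comparison $C^{-1} F^K_D(z,v) \le F^R_D(z,v) \le C\, F^K_D(z,v)$ valid for every $z \in D$ and every $v$.

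**Then I would** integrate: the infinitesimal two-sided bound passes immediately to the integrated distances by taking infima over piecewise-$C^1$ curves joining $p$ and $q$, giving $C^{-1} d_K(p,q) \le d_R(p,q) \le C\, d_K(p,q)$. For the Carath\'eodory and Bergman comparisons I would invoke the well-known equivalence of the Kobayashi, Carath\'eodory, and Bergman metrics on smoothly bounded strongly pseudoconvex domains (via their common Graham-type boundary asymptotics together with completeness), so that replacing $d_K$ by $d_C$ or $d_B$ only changes the constant.

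**The hard part will be** controlling the comparison on the compact core $K$ and, more delicately, handling the cross terms between the tangential and normal components so that the splitting is genuinely two-sided. The subtlety is that the limits in Theorem 1.3 are stated separately for $v_N(z)$ and $v_H(z)$, whereas a general $v$ mixes both; I would need a uniform estimate showing the mixed term $ds^2_z\big(v_H(z), v_N(z)\big)$ is dominated by the diagonal terms, so that $F^R_D(z,v)^2 \approx F^R_D(z,v_H(z))^2 + F^R_D(z,v_N(z))^2$ up to uniform constants. This amounts to a uniform Cauchy--Schwarz bound on the off-diagonal entries of $\big(g_{\al\ov\be}\big)$ relative to its diagonal, which follows from positive definiteness of the metric but requires the uniformity (in $z_0 \in \pa D$) built into Theorem 1.3 to hold up to the boundary.
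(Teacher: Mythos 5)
Your route is the same as the paper's: a two-sided infinitesimal comparison $F^R_D \approx F^K_D$ in a boundary collar via the asymptotics of Theorem 1.3, a trivial comparison on the compact core, integration along curves, and finally the Graham-type boundary behaviour of the Carath\'eodory and Bergman metrics. The gap sits exactly where you planted your flag, and your proposed fix does not close it. You claim that the two-sided splitting
\[
\big( F^R_D(z,v) \big)^2 \approx \big( F^R_D(z,v_H) \big)^2 + \big( F^R_D(z,v_N) \big)^2
\]
(with $v_H = v_H(z)$, $v_N = v_N(z)$) ``follows from positive definiteness of the metric'' via a uniform Cauchy--Schwarz bound on the off-diagonal entries of $\big( g_{\al \ov \be} \big)$. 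Cauchy--Schwarz for the positive definite form $\langle \cdot , \cdot \rangle_R$ gives only
\[
\big\vert \langle v_H, v_N \rangle_R \big\vert \le F^R_D(z, v_H) \, F^R_D(z, v_N),
\]
which yields the upper bound $\big( F^R_D(z,v) \big)^2 \le 2 \big( F^R_D(z,v_H) \big)^2 + 2 \big( F^R_D(z,v_N) \big)^2$, but in the other direction it yields only $F^R_D(z,v) \ge \big\vert F^R_D(z,v_H) - F^R_D(z,v_N) \big\vert$, which is vacuous precisely when the two components are of comparable size. Positive definiteness can never rule out that the angle between $v_H$ and $v_N$ \emph{measured in the $\La$-metric} degenerates as $z \ra \pa D$: the decomposition $v = v_H + v_N$ is orthogonal for the Euclidean structure, not a priori for $\langle \cdot, \cdot \rangle_R$, and the $\La$-metric is severely anisotropic near the boundary, so the lower bound $F^R_D(z,v) \gtrsim F^K_D(z,v)$ is exactly what is at stake.

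What is actually needed is that the normalized cross term vanishes asymptotically, i.e.
\[
\frac{ \langle v_H, v_N \rangle_R }{ \big( F^R_D(z,v_N) \big)^2 + \big( F^R_D(z,v_H) \big)^2 } \ra 0 \quad \text{as } z \ra z_0,
\]
and this is the content of Proposition 4.3 of the paper, whose proof occupies most of Section 4. It expands $\langle v_H, v_N \rangle_R$ in terms of $\La, \La_{\al}, \La_{\al \ov \be}$, feeds in the asymptotics of Theorem 1.1, and uses two further nontrivial inputs: the $C^2$-smoothness of $\la = \La \psi^{2n-2}$ up to $\ov D$ (a theorem of Levenberg--Yamaguchi, not a consequence of Theorem 1.1) and the tangential estimate $\big\vert \langle v_H(z), \ov \pa \psi(z) \rangle \big\vert \lesssim \de(z)$ of Proposition 4.2. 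Moreover the argument splits into the cases $v_N(z_0) \ne 0$ and $v_N(z_0) = 0$; the second is the delicate one, requiring $\vert v_N(z) - v_N(z_0) \vert \lesssim \de(z)$ together with strong pseudoconvexity to keep the tangential diagonal term bounded below. Without this analysis, or some substitute for it, your infinitesimal lower bound near $\pa D$ is unproved, and the remainder of your argument --- which is otherwise correct and identical in outline to the paper's --- cannot get started.
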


\no This has three consequences. First this recovers a result from \cite{LY} that the $\La$-metric is complete on strongly pseudoconvex domains. Second, it is
possible to get an expression for $d_R(p, q)$ with a uniformly bounded additive constant using ideas from \cite{BB}. First recall the Carnot-Carath\'{e}odory metric on the
boundary of a smooth strongly pseudoconvex domain $D$ with defining function $\psi$. Call a piecewise $C^1$-smooth curve $\al : [0, 1] \ra \pa D$ {\it horizontal} if for
every $t \in [0, 1]$ for which $\al'(t)$ exists, we have $\al'(t) \in H_{\al(t)}(\pa D)$. The strong pseudoconvexity of $\pa D$ implies that every pair of points $p, q \in
\pa D$ can be connected by a horizontal curve. The length of $\al$ is
\[
l_{\al} = \int_0^1 \cal L_{\psi} \big(\al(t), \al'(t) \big) \, dt
\]
and the Carnot-Carath\'{e}odory distance $d_H(p, q)$ between $p, q \in \pa D$ is the infimum of the lengths $l_{\al}$ for all possible horizontal curves $\al$ that join
$p, q$. Define $g : D \times D \ra \mbf R$ by
\[
g(x, y) = 2 \log \Bigg( \frac{ d_H \big( \pi(x), \pi(y) \big) + \max \big( h(x), h(y) \big) } {\sqrt{ h(x) h(y)} }  \Bigg)
\]
where $h^2(x) = \de(x)$. Note that $\pi$ is apriori well-defined and smooth only near $\pa D$, but a smooth extension to all of $D$ can be chosen and fixed once and for
all to define $g$.

\begin{cor}
Let $D$ be a $C^{\infty}$-smoothly bounded strongly pseudoconvex domain in $\mbf C^n$. Then there are uniform constants $\al > 1$ and $C > 0$ such that
\[
\al^{-1} \, g(p, q) - C \le d_R(p, q) \le \al \, g(p, q) + C
\]
for all $p, q \in D$.
\end{cor}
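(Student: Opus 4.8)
The plan is to obtain the estimate by combining two ingredients that are already available: the two-sided multiplicative comparison between $d_R$ and the Kobayashi distance furnished by Theorem 1.4, and the sharp additive estimate for the Kobayashi distance on strongly pseudoconvex domains established by Balogh and Bonk in \cite{BB}. The precise input I would quote from \cite{BB} is that, on a smoothly bounded strongly pseudoconvex $D$, the function $g$ defined above approximates the Kobayashi distance up to a bounded additive error, i.e. there is a uniform $C_0 > 0$ with
\[
\big| d_K(p, q) - g(p, q) \big| \le C_0
\]
for all $p, q \in D$, where $g$ is built from the Carnot-Carath\'{e}odory distance $d_H$ exactly as in the statement. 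Should the normalization of the Kobayashi metric used here differ from that of \cite{BB}, the two distances agree up to a fixed positive multiplicative factor, which I will absorb into $\al$ below while leaving the additive term bounded.

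The remaining step is purely formal. By Theorem 1.4 there is a constant $C_1 \ge 1$ with $C_1^{-1} d_K(p, q) \le d_R(p, q) \le C_1 d_K(p, q)$ for all $p, q \in D$. Observe also that $g \ge 0$ everywhere, since $d_H \ge 0$ and $\max\big(h(x), h(y)\big) \ge \sqrt{h(x) h(y)}$ force the argument of the logarithm defining $g$ to be at least $1$. Setting $\al = \max(C_1, 2) > 1$ and $C = \al C_0$, the upper bound follows by chaining the two estimates,
\[
d_R(p, q) \le C_1 d_K(p, q) \le C_1 \big( g(p, q) + C_0 \big) \le \al\, g(p, q) + C,
\]
while for the lower bound I multiply $d_K \ge g - C_0$ by the positive constant $C_1^{-1}$ and use $C_1^{-1} \ge \al^{-1}$ together with $g \ge 0$ to get
\[
d_R(p, q) \ge C_1^{-1} d_K(p, q) \ge C_1^{-1} \big( g(p, q) - C_0 \big) \ge \al^{-1} g(p, q) - C,
\]
which is the asserted two-sided bound.

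The genuine content is therefore already carried by Theorem 1.4 and by \cite{BB}, and the main points needing care are matters of bookkeeping. First, one must match the hypotheses and the Kobayashi normalization of \cite{BB} to the present setting, as noted above. Second, one must account for the smooth extension of $\pi$ to all of $D$ entering the definition of $g$: when $x, y$ range over a fixed compact subset of $D$, the quantities $d_H\big(\pi(x), \pi(y)\big)$, $h(x)$ and $h(y)$ are bounded above and away from $0$ (here $\pa D$ is compact, so $d_H$ is bounded), whence both $g$ and $d_R$ are bounded there and the estimate is trivial, while near $\pa D$ the projection $\pi$ is canonical and the behaviour of $g$ is governed by \cite{BB}. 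Finally, I would emphasize that the multiplicative factor $\al$ cannot in general be replaced by $1$ in favour of a purely additive comparison $d_R = g + O(1)$: this would require $d_R$ and $d_K$ to be asymptotically isometric, whereas the boundary asymptotics of Theorem 1.3 carry the dimensional factor $(2n - 2)^{1/2}$ and only show the two metrics to be comparable. Thus Theorem 1.4, a multiplicative comparison, is the sharpest general input available, and the form of the stated corollary is precisely what it yields. An alternative, more self-contained route would be to rerun the Balogh--Bonk construction directly for $d_R$, replacing their infinitesimal Kobayashi estimates by the asymptotics of Theorem 1.3; I regard reconciling the normalizations in that argument as the chief technical obstacle, which is exactly why the route through Theorem 1.4 is preferable.
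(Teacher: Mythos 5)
Your proof is correct, and it assembles the same two ingredients as the paper but at a different level. The paper works infinitesimally: it quotes the Balogh--Bonk estimates for the infinitesimal Kobayashi metric $F^K_D$ (the bounds with factors $(1 \pm C\delta^{1/2}(z))$ and $(1\pm\eta)$), transfers them to $F^R_D$ by the comparability $F^R_D \approx F^K_D$ obtained in the proof of Theorem 1.4, and then appeals to Theorem 1.1 of \cite{BB} to integrate the resulting two-sided bounds into the distance estimate against $g$. You instead stay at the level of distances throughout: you take the Balogh--Bonk conclusion $\vert d_K - g \vert \le C_0$ as a black box, combine it with the distance comparison $C_1^{-1}\, d_K \le d_R \le C_1\, d_K$ of Theorem 1.4, and note that $g \ge 0$ (your justification via $\max\big(h(x), h(y)\big) \ge \sqrt{h(x)h(y)}$ is right) so that the multiplicative constant can be pulled through the additive error; the chaining arithmetic is correct. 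The difference is worth recording because your route is the more robust citation: Theorem 1.1 of \cite{BB} is stated for Finsler metrics satisfying their estimates with factors tending to $1$ at the boundary, whereas $F^R_D$ satisfies them only up to the fixed constants $\alpha^{\pm 1}$, so the paper's ``integration'' step tacitly requires re-running the Balogh--Bonk argument and checking that multiplicative constants propagate to multiplicative constants in the conclusion; your version invokes their theorem exactly in the Kobayashi case that they prove. The price is negligible, since the constant in Theorem 1.4 is itself produced by integrating the infinitesimal comparison, so both routes yield constants of the same nature. Your closing observations --- that changing the extension of $\pi$ alters $g$ only by a bounded amount, and that the factor $(2n-2)^{1/2}$ in the asymptotics of Theorem 1.3 rules out improving the multiplicative constant $\alpha$ to $1$ --- are also sound.
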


Third, it also implies that for a strongly pseudoconvex domain $D$, the metric space $(D, d_R)$ is $\delta$-hyperbolic in the sense of Gromov, a
notion which may be briefly described as follows (see \cite{BB} for more details): let $(X, d)$ be a metric space and $I = [a, b] \subset \mbf R$ a compact
interval. Fix $x, y \in X$ and let $\gamma : I \ra X$ be a path that joins $x$ and $y$. Call $\gamma$ a geodesic segment if it is an isometry, i.e., for all
$s, t \in I$, $d(\gamma(s), \gamma(t)) = \vert s - t \vert$. Geodesic segments that join $x$ and $y$ will be denoted by $[x, y]$ inspite of their potential
non-uniqueness. $(X, d)$ is a geodesic space if any pair $x, y \in X$ can be joined by a geodesic segment. A geodesic space $(X, d)$ is called
$\delta$-hyperbolic for some $\delta \ge 0$ if every geodesic triangle $[x, y] \cup [y, z] \cup [z, x]$ in $X$ is $\delta$-thin, i.e.,
\[
{\rm dist} \big( w, [y, z] \cup [z, x] \big) < \delta
\]
for any $w \in [x, y]$. Said differently, there is a $\delta \ge 0$ such that the $\delta$-neighbourhood of any two sides of a given geodesic triangle
contains the third. Therefore in a coarse sense all geodesic triangles are thin and $(X, d)$ behaves like a negatively curved manifold. A definition that
works for more general metric spaces (possibly non-geodesic) is as follows: choose a base point $w \in X$ and define the product of $x, y \in X$ with respect
to $w$ as
\[
(x, y)_w = \frac{1}{2} \big\{ d(x, w) + d(y, w) - d(x, y) \big\}.
\]
Then $X$ is $\delta$-hyperbolic for some $\delta \ge 0$ if
\[
(x, y)_w \ge \min \big\{ (x, z)_w, (z, y)_w \big\} - \delta
\]
for all $x, y, z, w \in X$, which is equivalent to the more symmetric relation
\[
d(x, y) + d(z, w) \le \max \big\{ d(x, z) + d(y, w), d(x, w) + d(y, z) \big\} + 2 \de.
\]
It can be shown that for geodesic metric spaces these definitions are equivalent with possibly a different $\delta \ge 0$.

\medskip

On the other hand the belief that $(D, d_R)$, where $D$ is strongly pseudoconvex, behaves like a negatively curved space
should be further substantiated by estimating the curvatures of the $\La$-metric near $\pa D$ as was done for the Bergman metric in \cite{Kle} using Fefferman's
expansion of the Bergman kernel on smooth strongly pseudoconvex domains. Recall that the holomorphic sectional curvature at $z \in D$ along the direction
$v \in \mbf C^n$ is given by
\[
R(z, v) = \frac{R_{i \ov j k \ov l}(z) v^i \ov v^j v^k \ov v^l}{\big( g_{i \ov j} v^i \ov v^j \big)^2}
\]
where
\[
R_{i \ov j k \ov l} = - \frac{\pa^2 g_{i \ov j}}{\pa z_k \pa \ov z_l}(z) + g^{\nu \ov \mu}(z) \frac{\pa g_{i \ov \mu}}{\pa z_k}(z) \; \frac{\pa g_{\nu \ov j}}{\pa \ov z_l}(z)
\]
where $g^{\nu \ov \mu} = \big( g_{\al \ov \be} \big)^{-1}$ and $g_{\al \ov \be}(z) = \pa^2 \log(-\La) / \pa z_{\al} \pa \ov z_{\be} (z)$ and the standard
convention of summing over all indices that appear once in the upper and lower positions is being followed.

\begin{thm}
Let $D$ be a $C^{2}$-smoothly bounded strongly pseudoconvex domain in $\mbf C^n$. Fix $z_0 \in \pa D$ and $v
\in \mbf C^n$. Then for $z \in D$
\[
R \big(z, v_N(z) \big) \ra -1/(n - 1)
\]
as $z \ra z_0$ along the inner normal to $\pa D$ at $z_0$.
\end{thm}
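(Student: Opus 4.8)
The plan is to fix good coordinates and reduce the holomorphic sectional curvature along the normal to a single scalar ratio. Since $R(z,v)$ is invariant under scaling $v$, I may replace $v_N(z)$ by a unit vector; after a translation and a unitary rotation I place $z_0$ at the origin, make the complex normal at $z_0$ the $z_n$-axis, and normalize $\psi$ so that $\pa\psi/\pa z_n(z_0)=1$ and $\pa\psi/\pa z_{\al}(z_0)=0$ for $\al<n$. Then $v_N(z)=e_n$ along the inner normal, $\cal H=\{2\Re z_n<1\}$, and $\La_{\cal H}(z)=-\big(1-(z_n+\ov z_n)\big)^{-(2n-2)}$ depends on $z_n$ alone. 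Writing $u=-\La$, $t=-\psi(z)$ and $m=2n-2$, Theorem 1.1 records the leading asymptotics of the pure normal derivatives, $\pa_{z_n}^{p}\pa_{\ov z_n}^{q}u(z)\sim(-1)^{p+q}\,m(m+1)\cdots(m+p+q-1)\,t^{-(m+p+q)}$, whereas every derivative of $\La$ carrying a tangential index $\al<n$ has vanishing model value and is therefore a little-$o$ of the corresponding power of $t$.

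First I would treat the purely normal part of the curvature. From $R(z,e_n)=R_{n\ov n n\ov n}/(g_{n\ov n})^2$ with $R_{n\ov n n\ov n}=-\pa_n\pa_{\ov n}g_{n\ov n}+\sum_{\nu,\mu}g^{\nu\ov\mu}\,\pa_n g_{n\ov\mu}\,\pa_{\ov n}g_{\nu\ov n}$, I isolate the $\nu=\mu=n$ contribution $-\pa_n\pa_{\ov n}g_{n\ov n}+g^{n\ov n}\,|\pa_n g_{n\ov n}|^2$. Substituting the log-derivative identities for $g_{n\ov n},\pa_n g_{n\ov n},\pa_n\pa_{\ov n}g_{n\ov n}$ and inserting the asymptotics above gives $g_{n\ov n}\sim m\,t^{-2}$ (in agreement with Corollary 1.2), $|\pa_n g_{n\ov n}|\sim 2m\,t^{-3}$, $\pa_n\pa_{\ov n}g_{n\ov n}\sim 6m\,t^{-4}$, and (as the Schur-complement analysis below confirms) $g^{n\ov n}\sim t^{2}/m$, so the purely normal part of the numerator is $-6m\,t^{-4}+4m\,t^{-4}=-2m\,t^{-4}$. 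As the denominator is $(g_{n\ov n})^2\sim m^2t^{-4}$, numerator and denominator scale by the same power of $t$ and their quotient tends to $-2/m=-1/(n-1)$. Conceptually this is only the statement that the pure-$z_n$ jet of $\log(-\La)$ agrees with that of the one-variable potential $-m\log\big(1-(z_n+\ov z_n)\big)$, whose half-plane metric $m/\phi^{2}$ has constant curvature $-1/(n-1)$; the ball, for which the $\La$-metric is a constant multiple of the Bergman metric, gives the same value and serves as a check.

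It remains to show that every term of $R_{n\ov n n\ov n}$ carrying a tangential index is $o(t^{-4})$ and hence drops out. Writing $a_{\mu}=\pa_n g_{n\ov\mu}$ one has $\pa_{\ov n}g_{\nu\ov n}=\ov a_{\nu}$, so the second term of $R_{n\ov n n\ov n}$ equals the positive form $\sum_{\nu,\mu}g^{\nu\ov\mu}a_{\mu}\ov a_{\nu}$. I would split the metric into the tangential block $P=(g_{\al\ov\be})_{\al,\be<n}$, the column $q=(g_{\al\ov n})_{\al<n}$ and $d=g_{n\ov n}$, put $S=d-q^{*}P^{-1}q$, and use the Schur-complement identity $\sum_{\nu,\mu}g^{\nu\ov\mu}a_{\mu}\ov a_{\nu}=(a')^{*}P^{-1}a'+S^{-1}\,\big|a_n-q^{*}P^{-1}a'\big|^{2}$, where $a'=(a_{\mu})_{\mu<n}$. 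Strong pseudoconvexity enters through Theorem 1.3: the tangential length asymptotics force $P$ to be bounded below by a positive multiple of $t^{-1}\cal L_{\psi}(z_0)$ with $\cal L_{\psi}(z_0)$ positive definite, so that $P^{-1}=O(t)$. Granting the sharp off-diagonal bounds $g_{\al\ov n}=O(t^{-1})$ and $\pa_n g_{n\ov\mu}=O(t^{-2})$ for $\al,\mu<n$, one gets $q^{*}P^{-1}q=O(t^{-1})=o(t^{-2})$, hence $S\sim m\,t^{-2}$ and $g^{n\ov n}\sim t^{2}/m$, while $q^{*}P^{-1}a'=O(t^{-2})=o(t^{-3})$ and $(a')^{*}P^{-1}a'=O(t^{-3})=o(t^{-4})$; thus the form equals $4m\,t^{-4}+o(t^{-4})$ and the full numerator is $-2m\,t^{-4}+o(t^{-4})$, giving the limit $-1/(n-1)$.

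I expect the sharp off-diagonal orders $g_{\al\ov n}=O(t^{-1})$ and $\pa_n g_{n\ov\mu}=O(t^{-2})$ to be the main obstacle, since they lie exactly one power of $t$ below what the leading term of Theorem 1.1 (and of Corollary 1.2) supplies; indeed, the bare Cauchy--Schwarz estimate $|g_{\al\ov n}|\le(g_{\al\ov\al}g_{n\ov n})^{1/2}$ only yields the borderline $O(t^{-3/2})$, which is not enough to control $S$. These bounds express the fact that near $z_0$ the $\La$-metric asymptotically block-diagonalizes into its normal and tangential parts (it does so exactly for the ball along a radius). The natural way to obtain them is to refine the boundary analysis of $\La$ by one order, tracking the tangential dependence through the elementary $C^{2}$ estimate $\pa\psi/\pa z_{\al}(z)=O(t)$ valid along the normal because $\pa\psi/\pa z_{\al}(z_0)=0$; this is precisely the input that improves the vanishing leading coefficients into the stated sharp powers of $t$.
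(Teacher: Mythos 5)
Your algebraic skeleton is correct and is essentially the paper's: after the same normalization ($z_0=0$, $\nabla\psi(0)=(0,\dots,0,1)$, justified by the invariance Lemma 5.1), the purely normal block of $R_{n\bar n n\bar n}$ gives, in your notation $m=2n-2$, $t=-\psi(z)$, exactly $-6mt^{-4}+4mt^{-4}=-2mt^{-4}$ against $(g_{n\bar n})^2\sim m^2t^{-4}$, which is the paper's $-3/(n-1)+2/(n-1)=-1/(n-1)$; and your Schur-complement treatment of $\sum_{\nu,\mu}g^{\nu\bar\mu}\,\partial_n g_{n\bar\mu}\,\overline{\partial_n g_{n\bar\nu}}$ is a clean equivalent of the paper's cofactor expansion (Lemma 5.8) and four-case analysis in the proof of Theorem 1.6. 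Granting the two off-diagonal bounds you isolate, your argument closes.

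However, those two bounds are precisely the content of the paper's Lemmas 5.3--5.6, and the route you sketch for them does not work. Theorem 1.1 gives, for any derivative of $\Lambda$ carrying a tangential index, only a little-$o$ of the generic power (e.g.\ $\Lambda_\alpha=o(\psi^{-2n+1})$, $\Lambda_{\alpha\bar\beta\gamma}=o(\psi^{-2n-1})$), and it carries no rate information; a little-$o$ cannot be upgraded to a sharp gain of one full power of $t$ using only the Taylor-expansion fact $\psi_\alpha(z)=O(t)$ along the normal, which is a statement about $\psi$, not about $\Lambda$. What the paper actually uses is: (i) for $g_{\alpha\bar n}=O(t^{-1})$ (your bound on $q$), the Levenberg--Yamaguchi theorem that $\lambda=\Lambda\psi^{2n-2}$ is $C^2$-smooth up to $\overline D$; writing $\Lambda_a\psi^{2n-2}=\lambda_a-(2n-2)\lambda\,\psi_a/\psi$ and combining with the limit of $\psi_a/\psi$ along the normal (Lemma 5.3) yields Lemmas 5.4 and 5.5. (ii) For $\partial_n g_{n\bar\mu}=O(t^{-2})$ (your bound on $a'$), one needs $\Lambda_{\alpha\bar\beta\gamma}\psi^{2n}=O(1)$ along the normal, a third-order statement that even the $C^2(\overline D)$ regularity of $\lambda$ cannot reach; the paper proves it in Lemma 5.6 via the identity $\Lambda_{\alpha\bar\beta\gamma}(\xi)=2\,\partial G_{\alpha\bar\beta}/\partial z_\gamma(\xi,\xi)$ for the symmetrized Green-function derivatives, the scaled Green functions $g(w,\xi)$, and boundary-continuity results for the auxiliary harmonic functions $h_\alpha$, $h_{\alpha\bar\beta}$ imported from chapters 4 and 5 of Levenberg--Yamaguchi. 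The paper's Remark 5.7 states explicitly that this bound is \emph{not} a consequence of Theorem 1.1. So the obstacle you flag is genuine, and your proposal supplies only the geometric half of the needed input; the analytic half --- boundary regularity of $\lambda$ and of the third-order Robin/Green quantities --- is missing, and it is the heart of the paper's proof.
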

\no As can be expected the proof of this statement uses the asymptotics of $\La$ from theorem 1.1. However it does not follow directly from the information
provided by theorem 1.1. To explain this heuristically, note that corollary 1.2 shows that
\[
g_{\al \ov \be}(z) \backsim \frac{\pa \psi}{\pa z_{\al}}(z_0) \; \frac{\pa \psi}{\pa \ov z_{\be}}(z_0) \Big/ \big( \psi(z) \big)^2
\]
for $z$ close to $z_0$ and hence
\[
\det \big(g_{\al \ov \be}(z) \big) \backsim \det \Big( \frac{\pa \psi}{\pa z_{\al}}(z_0) \; \frac{\pa \psi}{\pa \ov z_{\be}}(z_0) \Big) \Big / \big( \psi(z) \big)^{2n}
\]
which is apriori indeterminate since the numerator vanishes and $\psi(z) \ra 0$ as $z \ra z_0$. Thus the difficulty is to control $g^{\nu \ov \mu}$. One way
to do this is to show that
\[
\frac{\pa g_{\al \ov \be}}{\pa z_{\gamma}}(z) \backsim \frac{1}{\big( \psi(z) \big)^2}
\]
for $z$ close to $z_0$ and all $1 \le \al \le n - 1$, $1 \le \be, \gamma \le n$ and this can be done along the inner normal at $z_0$. A direct calculation
using the asymptotics of theorem 1.1 yields a rate of blow up for the derivatives of $g_{\al \ov \be}$ that is of the order of $\big( \psi(z) \big)^{-3}$ which is not
enough to control the indeterminacy in $g^{\nu \ov \mu}$. While a similar estimate for the holomorphic curvature
should hold along tangential directions (and indeed along all directions) without any additional assumptions on the way $z$ approaches $z_0$, a stronger claim
about the blow up of the fourth order derivative of $\La$ near the boundary is needed and this is not a direct consequence of theorem
1.1. This point will be discussed further later on in the article.

\medskip

Another remark about the $\La$-metric is motivated by Lu-Qi Keng's theorem which states
that a bounded domain in $\mbf C^n$ with complete Bergman metric and whose holomorphic sectional curvature is equal to a negative constant everywhere must be
biholomorphic to the ball. Using ideas from \cite{CW} it is possible to prove a version of this result for the $\La$-metric.

\begin{thm}
Let $D$ be a $C^{\infty}$-smoothly bounded strongly convex domain in $\mbf C^n$. Suppose that the $\La$-metric on $D$ has constant negative holomorphic sectional
curvature. Then $D$ is biholomorphic to the ball $\mbf B^n$ and the $\La$-metric is proportional to the Bergman metric.
\end{thm}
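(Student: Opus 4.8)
The plan is to show that the constant-curvature hypothesis forces $(D, ds^2_z)$ to be a complex space form, and then to promote the resulting infinitesimal rigidity to a genuine biholomorphism of $D$ onto the ball. Suppose $R(z,v) \equiv c$ for all $z \in D$ and all nonzero $v \in \mbf C^n$. First I would identify the constant: letting $z \ra z_0 \in \pa D$ along the inner normal and applying theorem 1.5 to the direction $v_N(z)$ shows $c = -1/(n-1) < 0$. (This merely fixes the normalization and is not strictly needed, since the classification of space forms is insensitive to the precise value of $c<0$.) Next I would pass from this scalar identity to the full curvature tensor. The K\"ahler curvature tensor obeys the symmetries $R_{i \ov j k \ov l} = R_{k \ov j i \ov l} = R_{i \ov l k \ov j}$ and $R_{i \ov j k \ov l} = \ov{R_{j \ov i l \ov k}}$, and the hypothesis says precisely that the Hermitian quartic $R_{i \ov j k \ov l} v^i \ov v^j v^k \ov v^l$ equals $c\,(g_{i \ov j} v^i \ov v^j)^2$ for every $v$. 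Polarizing this identity (the complex analogue of Berger's lemma, which uses exactly these symmetries) yields
\[
R_{i \ov j k \ov l}(z) = \frac{c}{2}\big( g_{i \ov j}(z)\, g_{k \ov l}(z) + g_{i \ov l}(z)\, g_{k \ov j}(z) \big)
\]
at every $z \in D$, so that $(D, ds^2_z)$ is a K\"ahler manifold of constant holomorphic sectional curvature $c < 0$.

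I would then bring in two global features of $D$. Since $D$ is convex it is contractible, hence simply connected; and since $D$ is strongly convex it is strongly pseudoconvex, so the $\La$-metric is complete (as recalled after theorem 1.3). A simply connected complete K\"ahler manifold of constant holomorphic sectional curvature $c < 0$ is holomorphically isometric to the ball $\mbf B^n$ equipped with a constant multiple of the Bergman metric; this is the classical uniformization of complex space forms (Hawley, Igusa) underlying the argument in \cite{CW}. Let $F : D \ra \mbf B^n$ be such a holomorphic isometry. It is in particular a biholomorphism, which proves the first assertion.

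Finally I would read off the proportionality. Writing $g^D_B$ and $g^{\mbf B^n}_B$ for the Bergman metrics and using that the Bergman metric is a biholomorphic invariant gives $F^{\ast} g^{\mbf B^n}_B = g^D_B$. On the ball the $\La$-metric is itself a constant multiple of the Bergman metric (as computed in the Introduction), so $F$ is an isometry from $(D, ds^2_z)$ onto $(\mbf B^n, c\, g^{\mbf B^n}_B)$, whence $ds^2_z = F^{\ast}\big(c\, g^{\mbf B^n}_B\big) = c\, g^D_B$. Thus the $\La$-metric is proportional to the Bergman metric on $D$.

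The main obstacle is the passage from the infinitesimal statement to the global biholomorphism. The algebraic curvature identity only produces a \emph{local} holomorphic isometry with complex hyperbolic space, via the developing map, and turning this into a single biholomorphism of all of $D$ onto the entire ball is where the hypotheses must be used carefully: simple connectivity, supplied by convexity, kills the monodromy of the developing map so that it is single-valued on $D$, while completeness, supplied by theorem 1.3, forces the developing map to be surjective and a covering, hence a biholomorphism. The delicate point is to verify that the $\La$-metric genuinely meets the smoothness and completeness hypotheses needed to run this globalization, and that its boundary behaviour introduces no obstruction.
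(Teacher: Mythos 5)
Your proposal is sound, but it takes a genuinely different route from the paper's, which follows Cheung--Wong \cite{CW}. The paper never forms the full curvature tensor: it splits the hypothesis into an upper and a lower curvature bound, gets $F^K_D \ge \tfrac{c}{2}F^R_D$ from the upper bound via the Ahlfors--Schwarz lemma applied to maps $\Delta \ra D$, and gets $F^C_D \le \tfrac{c}{2}F^R_D$ from the lower bound via a Schwarz-type lemma for maps $D \ra \Delta$ (Lemma 6.1), proved by a maximum-principle argument on the shrunken domains $\al D$ --- this is where convexity (so that $\al D \subset\subset D$) and the boundary blow-up of $F^R_D$ from Theorem 1.3 enter. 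Lempert's theorem ($F^C_D = F^K_D$ on convex domains) then pinches the three metrics together, so the Kobayashi metric is a smooth Hermitian metric and Stanton's theorem \cite{S} identifies $D$ with $\mbf B^n$; proportionality to the Bergman metric follows as in your last step. You instead exploit the K\"ahler structure of the $\La$-metric: polarization (legitimate here, since $g_{\al \ov \be} = \pa^2 \log(-\La)/\pa z_{\al} \pa \ov z_{\be}$ does satisfy the K\"ahler curvature symmetries) turns constant holomorphic sectional curvature into the space-form identity, and Hawley--Igusa uniformization together with simple connectivity (from convexity) and completeness (Theorem 1.4, or \cite{LY}) gives the holomorphic isometry onto the ball. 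Each approach buys something: the paper's argument works in the merely Hermitian category and never needs the K\"ahler identities --- indeed your parenthetical claim that uniformization ``underlies'' \cite{CW} has it backwards, since Cheung--Wong designed their proof precisely to bypass that theory, which is unavailable for non-K\"ahler metrics --- whereas your argument is shorter, avoids the deep complex-analytic inputs (Lempert, Stanton), and uses convexity only for simple connectivity, so it in fact proves the formally stronger statement that any smoothly bounded, simply connected, strongly pseudoconvex domain whose $\La$-metric has constant negative holomorphic sectional curvature is biholomorphic to $\mbf B^n$. Two small corrections: the holomorphic isometry lands on the ball equipped with a \emph{positive} multiple of the Bergman metric (your scaling factor $c$ is negative, so $ds^2_z = c\, g^D_B$ cannot be right as written), and within the paper's numbering the normal-direction curvature result is Theorem 1.6, with completeness recalled after Theorem 1.4.
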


\no Finally, it is also possible to show the interior stability of the $\La$--metric on a smoothly varying family of domains in $\mbf C^n$.

\begin{thm}
Let $D$ be a $C^{\infty}$-smoothly bounded pseudoconvex domain in $\mbf{C}^{n}$ and let $D_j$ be a sequence of smoothly bounded pseudoconvex domains that converge to $D$ in the
$C^2$-topology. Let $ds^2, ds^2_j$ be the $\La$--metrics on $D$ and $D_j$ respectively. Denote by $R(p, v), R_j(p, v)$ the holomorphic sectional curvatures of $D, D_j$
respectively at $p \in D$ along $v \in \mbf C^n$. Then $ds^2_j \ra ds^2$ uniformly on compact subsets of $D$ and $R_j \ra R$ uniformly on compact subsets of $D \times \mbf
C^n$.
\end{thm}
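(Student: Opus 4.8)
The plan is to establish the convergence of the potentials first and then propagate it to derivatives, since the $\La$-metric and its curvature are built entirely from $\log(-\La)$ and its derivatives. The central object is therefore the Robin function itself, viewed as depending on the domain. I would begin by recalling that $\La_D(p) = H_{D,p}(0)$, where $H_{D,p}$ solves the Dirichlet problem on $D$ with boundary data $-\vert z - p\vert^{-2n+2}$. So the first step is to show that $\La_{D_j} \ra \La_D$ uniformly on compact subsets of $D$. Since $D_j \ra D$ in the $C^2$-topology, for a fixed compact $K \subset D$ we have $K \subset D_j$ for all large $j$, and the defining functions $\psi_j \ra \psi$ in $C^2$. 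The solutions of the Dirichlet problem depend continuously on the domain and the boundary data: the boundary data $-\vert z - p\vert^{-2n+2}$ varies continuously as the boundaries $\pa D_j \ra \pa D$ converge, and one can compare $H_{D_j,p}$ with $H_{D,p}$ using the maximum principle together with barriers at $\pa D$ (available since $D$ is smoothly bounded, hence regular). This yields uniform convergence of the harmonic functions, and hence of their values at the pole, on compact subsets.

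Once $\La_{D_j} \ra \La_D$ locally uniformly, the next step is to upgrade this to convergence of all derivatives. The key point is that each $-\La_{D_j}$ is harmonic nowhere but is real analytic, yet more usefully the functions $h_{j,p}(z) = H_{D_j,p}(z) - H_{D_j,p}(0)$ are \emph{harmonic} in $z$ on $D_j$. Interior elliptic (or simply harmonic-function) estimates then give that local uniform convergence of a sequence of harmonic functions forces local uniform convergence of all their derivatives. To get derivatives of $\La$ in the pole variable $p$, I would invoke the variation-of-domains machinery from \cite{LY} summarized in the introduction: the complex Hessian of $-\La$ (formula (1.2)) and of $\log(-\La)$ (formula (1.3)) are expressed as boundary and volume integrals of $G_D(z, p)$ and its derivatives $G_\be(z, p)$. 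Since $G_{D_j}(z, p) \ra G_D(z, p)$ together with all derivatives locally uniformly (this follows from the convergence of $\La_{D_j}$ and $H_{D_j,p}$ just established, as $G = \vert z - p\vert^{-2n+2} + \La + h_p$), and since the integrands in (1.2)--(1.3) are built from these together with the Levi data $K_2(z, \cdot)$ of $\pa D_j$ which converges because $\psi_j \ra \psi$ in $C^2$, the Hessians converge. Differentiating once more, the same circle of ideas yields convergence of $g_{\al\ov\be, j} \ra g_{\al\ov\be}$ and of $\pa g_{\al\ov\be,j}/\pa z_\gamma$, etc., uniformly on compacta.

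With the metric coefficients $g_{\al\ov\be,j}$ and their first and second derivatives converging uniformly on compact subsets of $D$ to those of $g_{\al\ov\be}$, the conclusion $ds^2_j \ra ds^2$ is immediate. For the curvature, recall
\[
R_{i\ov j k\ov l} = -\frac{\pa^2 g_{i\ov j}}{\pa z_k\pa \ov z_l} + g^{\nu\ov\mu}\frac{\pa g_{i\ov\mu}}{\pa z_k}\frac{\pa g_{\nu\ov j}}{\pa\ov z_l}
\]
and the formula for $R(p, v)$. On a fixed compact set, $\big(g_{\al\ov\be}\big)$ is uniformly positive definite for $D$, and by the uniform convergence it is uniformly positive definite for all large $j$ as well; hence the inverse matrices $g^{\nu\ov\mu}_j$ converge uniformly to $g^{\nu\ov\mu}$ (inversion is continuous on the set of matrices with determinant bounded away from zero). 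Combining this with convergence of the second derivatives of $g_{i\ov j}$ gives $R_{i\ov j k\ov l, j} \ra R_{i\ov j k\ov l}$, and then $R_j(p, v) \ra R(p, v)$ uniformly for $(p, v)$ in a compact subset of $D \times \mbf C^n$ (after restricting to $\vert v\vert = 1$, say, which suffices since $R$ is invariant under scaling $v$).

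I expect the main obstacle to be the very first step: proving local uniform convergence $\La_{D_j} \ra \La_D$ with enough regularity to bootstrap. The delicacy is that the convergence is governed by the behaviour of the solutions of the Dirichlet problem up to the boundary, where the domains are only assumed to converge in $C^2$, yet the boundary data $-\vert z - p\vert^{-2n+2}$ depends on the variable boundary and must be compared across domains. Controlling this requires barriers that are uniform in $j$, which the $C^2$-convergence of the $\psi_j$ should supply, but making the comparison quantitative enough to transfer to the integral formulas (1.2)--(1.3) — where boundary integrals over $\pa D_j$ against $\pa D$ enter — is the technical heart of the argument. Once interior convergence of $G_{D_j}$ and all its derivatives is secured, the passage to the metric and curvature is routine.
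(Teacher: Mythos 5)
Your first step (interior convergence of the Green/Robin functions) and your last step (passing from uniform convergence of the metric coefficients and their derivatives to convergence of $ds^2_j$ and of the curvature, via uniform positive definiteness and continuity of matrix inversion) are sound and agree with the paper. The genuine gap is the middle step: the mechanism you propose for getting the derivatives of $\La_j$ in the pole variable. You invoke the Levenberg--Yamaguchi variational formulas (1.2)--(1.3), but these express the Hessians as integrals over $\pa D_j$ and over all of $D_j$ involving $\big\vert \nab_z G_j(z,p) \big\vert^2$ and related quantities; their convergence as $j \ra \infty$ requires control of the Green functions and their gradients up to the \emph{moving} boundaries $\pa D_j$, which interior convergence does not supply and which you yourself flag as the unresolved ``technical heart.'' Moreover, even granting such boundary control, (1.2)--(1.3) produce only complex Hessians, i.e.\ second-order derivatives of $-\La$ and $\log(-\La)$; the curvature tensor requires third and fourth order derivatives (the terms $\pa^2 g_{i \ov j}/\pa z_k \pa \ov z_l$), and ``differentiating the variational formulas once more'' is not a routine step --- no such formulas are available in the paper.

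The paper avoids all boundary considerations by a different device: the double Poisson-integral representation from the proof of Lemma 2.2. Fix a ball $U = B(0,r)$ compactly contained in $D$ (hence contained in $D_j$ for all large $j$); then for $p \in U$,
\begin{equation*}
\La_j(p) = \frac{1}{(\sig_{2n} r)^2} \iint_{\pa U \times \pa U} H_j(z,w)\, \frac{\big(r^2 - \vert z - p \vert^2\big)\big(r^2 - \vert w - p\vert^2\big)}{\vert z - p \vert^{2n} \vert w - p \vert^{2n}}\; dS_z\, dS_w,
\end{equation*}
where $H_j(z,w) = G_j(z,w) - \vert z - w \vert^{-2n+2}$. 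Every derivative $D^{A \ov B}$ in $p$ falls on the explicit kernel, which is smooth because $p$ stays away from $\pa U$; hence uniform convergence of $H_j$ on the \emph{fixed interior} compact set $\pa U \times \pa U$ --- which follows from interior convergence of the Green functions in both variables (the paper's Proposition 7.1, extended to two variables as in Proposition 3.6) --- immediately yields $D^{A \ov B}\La_j \ra D^{A \ov B}\La$ uniformly on $U$ for all multi-indices $A, B$, with no boundary analysis and no appeal to (1.2)--(1.3). If you wish to salvage your own outline, note that $H_j(z,w)$ is harmonic in each variable separately and converges locally uniformly, so all mixed derivatives converge locally uniformly (iterate the Poisson representation in each variable); since $\La_j(p) = H_j(p,p)$, the chain rule on the diagonal gives the same conclusion. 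Either way, the appeal to the variational formulas should be removed, as it is both insufficient in order of derivatives and unjustifiable without boundary estimates.
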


\medskip

\no {\it Acknowledgements :} Some of the material presented here has benefited from the discussions that the first author had with Professors Kang-Tae Kim and Rasul Shafikov. Many thanks are due to them for their remarks and encouragement. We would also like to thank Harish Seshadri for a timely and very useful clarification regarding Gromov hyperbolic spaces and in particular for pointing out the relevance of \cite{Va} to this work. The second author is indebted to N. Levenberg and H. Yamaguchi for various helpful clarifications.

\section{Properties of the Robin function}

\no To obtain information about $\La(p)$ it would be desirable to first understand a few of its basic properties. All the properties mentioned below were
proved in \cite{Y} and the sole purpose of this exposition is to clarify certain salient features that will be needed later. It should also be noted that
while some of these properties hold on a more general class of domains, it will suffice for us to restrict attention to bounded regular domains.

\begin{lem}\label{Lambda-from-G}
Let $D \subset \mbf C^n$ be a bounded regular domain and fix $p \in D$. Then for every $r > 0$ such that the ball $B(p, r)$ is relatively compact in $D$,
\begin{align}\label{Lambda-from-G-exp}
\Lambda(p) = -\frac{1}{r^{2n-2}}+\frac{1}{\sigma_{2n} r^{2n - 1}} \int_{\partial B(p,r)} G(z,p) \;dS_{z}
\end{align}
where $\sigma_{2n}$ is the surface area of the unit sphere in $\mbf C^n$ and $dS_{z}$ is the surface area element on $\partial B(p,r)$. Moreover $\La < 0$ in $D$.
\end{lem}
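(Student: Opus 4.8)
The plan is to obtain the identity as a repackaging of the mean value property for harmonic functions. First I would recall the decomposition $G(z, p) = \vert z - p \vert^{-2n+2} + H_p(z)$, where $H_p$ solves the Dirichlet problem on $D$ with boundary data $-\vert z - p \vert^{-2n+2}$; regularity of $D$ guarantees that $H_p$ exists, is harmonic on $D$, and extends continuously to $\ov D$. Since $H_p$ is continuous at $p$, the defining limit gives $\La(p) = \lim_{z \ra p} \big( G(z, p) - \vert z - p \vert^{-2n+2} \big) = H_p(p)$, so the Robin constant is simply the value of the regular part of the Green function at the pole.

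Next, fix $r > 0$ with $\ov{B(p, r)} \subset D$ and average $G(\cdot, p)$ over the sphere $\pa B(p, r)$, whose surface area in $\mbf R^{2n}$ is $\sig_{2n} r^{2n - 1}$. Splitting the integrand via the decomposition above, the singular part is constant on the sphere, $\vert z - p \vert^{-2n+2} = r^{-2n+2}$, so its normalized spherical average is exactly $r^{-2n+2} = 1/r^{2n-2}$. For the regular part, since $\ov{B(p, r)} \subset D$ and $H_p$ is harmonic on $D$, the mean value property for harmonic functions in $\mbf R^{2n}$ yields
\[
\frac{1}{\sig_{2n} r^{2n-1}} \int_{\pa B(p, r)} H_p(z) \; dS_z = H_p(p) = \La(p).
\]
Adding the two contributions shows that the spherical average of $G$ over $\pa B(p, r)$ equals $1/r^{2n-2} + \La(p)$, and rearranging produces the claimed identity for every admissible $r$.

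Finally, for the sign statement I would invoke the maximum principle on $H_p$. Its boundary values $-\vert z - p \vert^{-2n+2}$ are continuous on $\pa D$ (since $p \in D$ keeps $\vert z - p \vert$ bounded away from zero there) and strictly negative, so $\sup_{\ov D} H_p = \max_{\pa D} H_p < 0$; in particular $\La(p) = H_p(p) < 0$, as desired.

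I do not anticipate a genuine obstacle here: the only points needing care are bookkeeping ones — confirming the normalization $\sig_{2n} r^{2n-1}$ for the surface area of $\pa B(p, r)$ so that the mean value property appears in standard form, and checking that the two hypotheses are used in exactly the right places, namely relative compactness of $B(p, r)$ in $D$ to license the mean value property for $H_p$, and regularity of $D$ to guarantee the solution $H_p$ of the Dirichlet problem and its continuous extension to $\ov D$.
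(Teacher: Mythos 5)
Your proof of the identity is essentially the paper's own argument: both average $G(\cdot,p)$ over $\pa B(p,r)$ and dispose of the harmonic remainder by the mean value property --- the paper writes $G(z,p) = \vert z-p\vert^{-2n+2} + \La(p) + h_p(z)$ with $h_p(p)=0$, while you keep $H_p = \La(p) + h_p$ and use $H_p(p) = \La(p)$; the two bookkeepings are interchangeable. Where you genuinely diverge is the negativity of $\La$. The paper proves it by domain comparison: it encloses $D$ in a large ball $B$, notes that $G_D(z,p)-G_B(z,p)$ is harmonic on $D$ with boundary values $-G_B(z,p)\le 0$, concludes $G_D \le G_B$ from the maximum principle, and hence $\La_D(p) \le \La_B(p) < 0$ using the explicit Robin function of the ball computed in the introduction. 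You instead apply the maximum principle directly to $H_p$: since $D$ is bounded, the boundary data $-\vert z-p\vert^{-2n+2}$ is bounded above on $\pa D$ by a strictly negative constant, so $\La(p)=H_p(p)\le \max_{\pa D} H_p < 0$. Your route is correct and more elementary, since no model computation on the ball is needed; what the paper's comparison buys is the monotonicity $G_D \le G_B$ (hence $\La_D \le \La_B$) under inclusion of domains, a device that reappears later in the paper (for instance in Proposition 3.1). Both arguments invoke regularity of $D$ and the relative compactness of $B(p,r)$ exactly where you say they are needed.
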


\begin{proof}
As noted in (1.1)
\[
G(z, p) = \vert z - p \vert^{-2n + 2} + \La(p) + h_p(z)
\]
where $h_p(z)$ is harmonic on $D$ and $h_p(p) = 0$. For every $r > 0$ as above integrate this over $\pa B(p, r)$ to get
\[
\La(p) = -\frac{1}{r^{2n-2}}+\frac{1}{\sigma_{2n} r^{2n - 1}} \int_{\partial B(p,r)}G(z,p) \;dS_{z} + \frac{1}{\sigma_{2n} r^{2n - 1}} \int_{\pa B(p, r)} h_p(z) \; dS_z.
\]
Since $h_p(z)$ is harmonic in $D$, the last term in the expression above, which is the mean value of $h_p(z)$ on $\pa B(p, r)$ equals $h_p(p) = 0$ and this gives the
desired result.

\medskip

For the other claim, choose $R > 0$ large enough so that $D \subset B = B(0, R)$. For $p \in D$, let $G_D(z, p)$ and $G_B(z, p)$ be the Green functions for $D$ and
$B$ respectively with pole at $p$. Then $G_D(z, p) - G_B(z, p)$ is harmonic on $D$ and equals $-G_B(z, p)$ on $\pa D$.
Hence the maximum principle shows that $G_D(z, p) \le G_B(z, p)$ on $D$. Consequently for all $p \in D$, $\La(p)$ is at most the Robin function for $B$ at $p$ which is
negative.
\end{proof}

\begin{lem}
Let $D \subset \mbf C^n$ be a bounded regular domain. Then $\Lambda$ is real analytic on $D$.
\end{lem}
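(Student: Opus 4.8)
The plan is to realize $\La$ as the restriction to the diagonal of a function that is jointly real analytic on $D \times D$, and to obtain that joint analyticity from \emph{separate} harmonicity via elliptic regularity. Write $H(z, p) = H_p(z) = G(z, p) - \vert z - p \vert^{-2n + 2}$, so that by the defining limit of the Robin constant $\La(p) = \lim_{z \ra p} H_p(z) = H(p, p)$. By construction $z \mapsto H_p(z)$ is harmonic throughout $D$ for each fixed $p$. Since the Green function for the Laplacian on a regular domain is symmetric, $G(z, p) = G(p, z)$, and since $\vert z - p \vert = \vert p - z \vert$, the function $H$ is itself symmetric, $H(z, p) = H(p, z)$. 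Consequently $p \mapsto H(z, p)$ is harmonic on $D$ for each fixed $z$ as well; that is, $H$ is \emph{separately harmonic} on $D \times D$.

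First I would check that $H$ is locally bounded on $D \times D$. Off the diagonal this is clear. Near a diagonal point, fix a compact $K \subset D$ and $\rho > 0$ with $\ov{B(p, \rho)} \subset D$ for all $p \in K$; choosing $R$ with $D \subset B(0, R)$ and using the comparison $G_D(z, p) \le G_{B(0, R)}(z, p)$ established in the proof of Lemma \ref{Lambda-from-G} bounds $H$ from above on $K \times K$ by the explicit, bounded regular part of the Green function of the ball. For the lower bound, the positivity $G_D \ge 0$ gives $H_p(z) \ge -\rho^{-2n + 2}$ on $\pa B(p, \rho)$, and the minimum principle applied to the harmonic function $z \mapsto H_p(z)$ on $B(p, \rho)$ propagates this bound to the interior. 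Hence $H$ is locally bounded.

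The central step is to upgrade separate harmonicity to joint harmonicity. Viewing $D \times D$ as an open subset of $\mbf R^{2n}_z \times \mbf R^{2n}_p$ and letting $\Delta_z, \Delta_p$ be the Laplacians in the two groups of variables, I would take any test function $\phi \in C^{\infty}_c(D \times D)$ and compute, using Fubini and the fact that $z \mapsto H(z, p)$ is harmonic for each $p$,
\[
\int_{D \times D} H \, \Delta_z \phi = \int_D \Big( \int_D H(z, p) \, \Delta_z \phi(z, p) \, dV_z \Big) \, dV_p = 0,
\]
and symmetrically $\int_{D \times D} H \, \Delta_p \phi = 0$. Thus $(\Delta_z + \Delta_p) H = 0$ in the distributional sense on $D \times D$. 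Since $\Delta_z + \Delta_p$ is the standard Laplacian on $\mbf R^{4n}$, Weyl's lemma produces a smooth genuinely harmonic representative, and analytic hypoellipticity of the Laplacian makes that representative real analytic; separate continuity of $H$ forces $H$ to coincide with it everywhere. Finally, the diagonal $\{(p, p) : p \in D\}$ is a real analytic submanifold of $D \times D$, so $\La(p) = H(p, p)$, being the composition of the real analytic embedding $p \mapsto (p, p)$ with the real analytic $H$, is real analytic on $D$.

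The step I expect to demand the most care is exactly this passage from separate to joint harmonicity: separate harmonicity only yields separate smoothness a priori, and the content lies in observing that it nonetheless forces distributional harmonicity for the product operator $\Delta_z + \Delta_p$, which is elliptic with analytic coefficients, so that standard elliptic and analytic-hypoelliptic regularity apply. The local boundedness of paragraph two is the technical input that legitimizes the distributional manipulation; the diagonal restriction at the end is then immediate.
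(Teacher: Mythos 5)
Your proof is correct, and its first half coincides with the paper's: both use the symmetry of the Green function to see that $H(z,p) = G(z,p) - \vert z - p \vert^{-2n+2}$ is harmonic in each variable separately on $D \times D$, together with a boundedness observation (the paper records only the upper bound $H \le 0$ from the maximum principle; your two-sided local bound is proved along the same lines). The endgames, however, differ genuinely. The paper applies the Poisson integral formula twice on a fixed ball $B = B(p_0, r)$ relatively compact in $D$ and Fubini's theorem, which expresses $\Lambda(p) = H(p,p)$ for $p \in B$ as an integral of the fixed bounded function $H(z,w)$ over $\partial B \times \partial B$ against an explicit kernel that is real analytic in $p$; real analyticity of $\Lambda$ is then read off directly, with no elliptic machinery at all. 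You instead upgrade separate harmonicity plus local boundedness to joint distributional harmonicity, and invoke Weyl's lemma together with analytic hypoellipticity of the Laplacian on $\mathbf{R}^{4n}$ to conclude that $H$ is jointly real analytic on $D \times D$, then restrict to the diagonal. Your route proves a stronger intermediate fact (joint real analyticity of $H$, which the paper never states) by a soft technique of independent interest: separately harmonic plus locally bounded implies jointly harmonic. The paper's route is more elementary and, importantly for the rest of the article, its double-integral representation is not a throwaway: it is reused verbatim in the proofs of Theorem 1.1 and Proposition 7.2, where uniform convergence of $H_j$ on $\partial U \times \partial U$ is converted into convergence of all derivatives of $\Lambda_j$. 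One small point to patch in your write-up: the Fubini computation and the almost-everywhere identification at the end require $H$ to be jointly (Borel) measurable on $D \times D$; this does follow from separate continuity, but it is not automatic and should be stated explicitly.
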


\begin{proof}
Since $G(z, p)$ is known to be symmetric in $z, p$ so is $H_p(z) = G(z, p) - \vert z - p \vert^{-2n + 2}$. Hence the function $H$ defined on $D \times D$ as $H(z, p) =
H_p(z)$ is harmonic in $z$ as well as $p$. Moreover the maximum principle implies that $H(z, p) \le 0 $ in $D \times D$. Now fix $p_0 \in D$ and let $B = B(p_0, r)$ be a
relatively compact ball in $D$. Applying the Poisson integral formula twice and appealing to Fubini's theorem shows that
\begin{align*}
\Lambda(p) = \frac{1}{(\sigma_{2n}r)^{2}}\iint_{\partial B\times \partial B} H(z, w) \frac{ \big( r^2 - \vert z - p \vert^{2} \big) \big( r^{2}- \vert w - p \vert^{2} \big)}{\vert z- p
\vert^{2n}\vert w- p \vert^{2n}} \; dS_{z} \;dS_{w}
\end{align*}
for all $p \in B$. It follows that $\Lambda(p)$ is real analytic in $B$.
\end{proof}

\begin{lem}
Let $D \subset \mbf C^n$ be a bounded regular domain. Then $-\Lambda$ is an exhaustion function for $D$.
\end{lem}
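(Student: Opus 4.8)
The plan is to show that $-\La$ tends to $+\infty$ at every boundary point, from which the exhaustion property follows immediately. Since $\La$ is real analytic on $D$ (Lemma 2.2), each sublevel set $\{p\in D:-\La(p)\le c\}$ is closed in $D$ and bounded; being relatively compact in $D$ is then equivalent to its closure not meeting $\pa D$, and this is ruled out once we know that $-\La(p)\to+\infty$ whenever $p\to z_0$ for a fixed $z_0\in\pa D$. So the whole statement reduces to this boundary divergence.

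The starting point is a harmonic-measure representation of the Robin constant. Writing $H_p$ for the harmonic function on $D$ with boundary values $-|z-p|^{-2n+2}$, so that $G(z,p)=|z-p|^{-2n+2}+H_p(z)$ and $\La(p)=H_p(p)$, and denoting by $\om_p$ the harmonic measure of $D$ at $p$, I would first record that the datum $z\mapsto -|z-p|^{-2n+2}$ is continuous on $\pa D$ (because $p\in D$), so regularity of $D$ gives
\[
-\La(p)=-H_p(p)=\int_{\pa D}|z-p|^{-2n+2}\,d\om_p(z)\ge 0.
\]
One could also extract this from Lemma \ref{Lambda-from-G} by shrinking the radius, but it is the harmonic-measure form that drives the estimate.

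Next I would localize near $z_0$. Fix $z_0\in\pa D$ and $\rho>0$, and set $\de=|p-z_0|$. For $p$ with $\de<\rho$ and every $z\in\pa D\cap B(z_0,\rho)$ one has $|z-p|\le|z-z_0|+|z_0-p|<2\rho$, hence $|z-p|^{-2n+2}>(2\rho)^{-2n+2}$; discarding the remaining boundary, where the integrand is nonnegative, yields
\[
-\La(p)\ge (2\rho)^{-2n+2}\,\om_p\big(\pa D\cap B(z_0,\rho)\big).
\]
The last ingredient is the concentration of harmonic measure at a regular point: choosing a continuous $f$ on $\pa D$ with $0\le f\le 1$, $f\equiv 0$ on $\pa D\cap B(z_0,\rho/2)$ and $f\equiv 1$ off $B(z_0,\rho)$, the solution $\int f\,d\om_p$ tends to $f(z_0)=0$ as $p\to z_0$ precisely because $z_0$ is a regular boundary point; thus $\om_p(\pa D\sm B(z_0,\rho))\to 0$ and $\om_p(\pa D\cap B(z_0,\rho))\ge 1/2$ for $p$ close enough to $z_0$. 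Combining these gives $\liminf_{p\to z_0}(-\La(p))\ge \tfrac12(2\rho)^{-2n+2}$ for every $\rho>0$, and letting $\rho\to 0$ forces $-\La(p)\to+\infty$.

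The one delicate point is that the two scales $\de=\mathrm{dist}(p,\pa D)$ and $\rho$ must be \emph{decoupled}: for a general regular domain no quantitative rate of concentration is available, so a single comparison carried out at the scale $\de$ need not succeed. The device above circumvents this by keeping $\rho$ fixed while taking $p\to z_0$, and only afterwards letting $\rho\to 0$; this converts the merely qualitative barrier estimate guaranteed by regularity into the divergence of $-\La$. I expect this decoupling, together with the justification of the harmonic-measure representation for the (continuous but unbounded-as-$p\to\pa D$) datum $-|z-p|^{-2n+2}$, to be the only steps requiring care, the remainder being routine bookkeeping with the maximum principle already invoked in Lemmas \ref{Lambda-from-G} and 2.2.
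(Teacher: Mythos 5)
Your proof is correct and is essentially the paper's own argument, just written in harmonic-measure language: your estimate $-\La(p) \ge (2\rho)^{-2n+2}\,\om_p\big(\pa D \cap B(z_0,\rho)\big)$ together with the concentration $\om_p\big(\pa D\cap B(z_0,\rho)\big)\ra 1$ at the regular point $z_0$ is precisely the paper's maximum-principle comparison of $H_p$ with the Perron solution $u_{-M}$ (which is nothing but $-M$ times the harmonic measure of $B\cap\pa D$), followed by $u_{-M}(p)\ra -M$ as $p\ra p_0$. The decoupling of scales you single out as the delicate point --- fix the ball, let $p\ra z_0$, only then shrink the ball --- is also exactly the paper's structure (fix $M$ and the ball $B$, let $p\ra p_0$, then let $M$ be arbitrary), so the two arguments coincide in substance.
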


\begin{proof}
Let $p_0 \in \partial D$ and let $M >0$ be given. Choose a ball $B = B(p_0, r)$ such that $\vert z - p \vert^{-2n + 2} > M$ for all $z, p \in B$. Let
$u_{-M}(z)$ be the harmonic function in $D$ whose boundary values are $-M$ on $B \cap \partial D$ and $0$ on $\partial D \sm B$. Fix $p \in B \cap D$
and consider the function $s(z)$ in $D$ defined by
\[
 s(z)=
 \begin{cases}
 u_{-M}(z) - H_p(z),  &\text{if $z \not= p$}\\
 u_{-M}(z) - \La(p),  &\text{if $z = p$}.
\end{cases}
\]
The function $s(z)$ is harmonic in $D$. To see what the boundary values of $s(z)$ are, fix $a \in B \cap \pa D$. Then
\[
s(z) \ra -M + \vert a - p \vert^{-2n + 2} > 0
\]
as $z \ra a$. On the other hand if $a \in \pa D \sm B$, then
\[
s(z) \ra -H_p(a) = \vert a - p \vert^{-2n + 2} > 0
\]
as $z \ra a$.
It follows from the maximum principle that $s(z) \geq 0$ in $D$ and in particular $s(p) \geq 0$. Hence $u_{-M}(p) \geq \Lambda(p)$ for all $ p \in D \cap B$.
Consequently,
\[
\limsup_{p\rightarrow  p_{0}}\Lambda( p )\leq \limsup_{p\rightarrow  p_{0}}u_{-M}(p) = -M,
\]
which means that $\lim_{p\rightarrow p_{0}}\Lambda(p)= -\infty$.
\end{proof}


\section{Boundary behaviour of $\La$}

\no The main goal of this section is to prove theorem 1.1. The basic strategy is to blow up a neighbourhood of $z_0 \in \Gamma$ by means of affine maps $T_j$
associated with a sequence $z_j \ra z_0$ to get a half space as was done in \cite{LY}. The affine maps produce a sequence of scaled domains $D_j = T_j(D)$
and we study the family of Green functions associated to these domains, but the first step is to localise the problem near $z_0$. For this fix a
neighbourhood $U$ of $z_0 \in \Gamma$ and a smooth local defining function $\psi$ for $U \cap \Gamma$ so that $U \cap D = \{z \in \mbf C^n : \psi < 0\}$.

\begin{prop}
There exists a neighbourhood $V$ of $z_0$ compactly contained in $U$ and a constant $C > 0$ depending only on $U$ such that
\[
G_{U \cap D}(z, p) \le G_D(z, p) \le G_{U \cap D}(z, p) + C
\]
for all $z, p \in V$.
\end{prop}

\begin{proof}
Fix $R > 0$ so that $B = B(z_0, 2R)$ is compactly contained in $U$ and let $V = B(z_0, R)$. The first inequality is known and follows from the inclusion $U
\cap D \subset D$. Indeed, the harmonic function $G_{U \cap D}(z, p) - G_D(z, p)$ on $U \cap D$ is at most $-G_D(z, p) \le 0$ on the boundary of $U \cap D$.
Therefore the maximum principle shows that $G_{U \cap D}(z, p) \le G_D(z, p)$ for all $z, p \in U$ and in particular if $z, p \in V$.

\medskip

For the upper estimate let $\Gamma_1 = B \cap \pa D$, $\Gamma_2 = \pa D \sm B$ and $\Gamma_3 = \pa B \cap D$. For a bounded regular domain $\Om \subset \mbf C^n$, let $d
\mu^{\Om}_z$ denote the harmonic measure on $\pa \Om$ at $z \in \Om$ and recall that
\[
G_{\Om}(z, \xi) = \vert z - \xi \vert^{-2n + 2} - \int_{\pa \Om} \vert t - \xi \vert^{-2n + 2} \; d\mu^{\Om}_z
\]
for $z, \xi \in \Om$. Noting that $\Gamma_1$ is a common component of the boundaries of $B \cap D$ and $D$ and appealing to this  representation of the Green function it
follows that
\begin{multline*}
G_D(z, p) - G_{U \cap D}(z, p) \le G_D(z, p) - G_{B \cap D}(z, p)\\
  = - \Big( \int_{\Gamma_1} \vert t - p \vert^{-2n + 2} \; d \mu^D_z - \int_{\Gamma_1} \vert t - p \vert^{-2n + 2} \; d \mu^{B \cap D}_z \Big) - \int_{\Gamma_2} \vert t -
p\vert^{-2n + 2} \; d \mu^D_z + \int_{\Gamma_3} \vert t - p \vert^{-2n + 2} \; d \mu^{B \cap D}_z
\end{multline*}
for $z, p \in V$. Since $d \mu^{B \cap D}_z \le d \mu^D_z$ the first term in the brackets is non-negative and so is the integral over $\Gamma_2$ since $d \mu^D_z \ge 0$
for all $z$. Finally if $t \in \Gamma_3$ and $p \in V$ it follows that $\vert t - p \vert^{-2n + 2} \le R^{-2n + 2}$ for all $z \in V$. It follows that $G_D(z, p) - G_{U
\cap D}(z, p) \le R^{-2n + 2}$ for all $z, p \in V$.
\end{proof}

\no Let $\{z_j\}$ be a sequence in $D$ that converges to $z_0$ and we may assume without loss of generality that all $z_j \in U \cap D$. Consider the affine maps
\[
T_j(z) = (z - z_j) / (-\psi(z_j))
\]
that blow up any fixed neighbourhood of $z_0$ in the sense that for any compact $K \subset \mbf C^n$ there is a $j_0$ such that $K \subset T_j(U)$ for all $j \ge j_0$.
Let $D_j = T_j(D)$ and $(U \cap D)_j = T_j(U \cap D)$. Note that $T_j(z_j) = 0$ and hence $0 \in D_j$ for all $j$. The defining function for $D_j$ in $(U \cap D)_j$ is
\begin{align*}
\psi \circ T^{-1}_j(z) & = \psi \Big(z_j + z \big( -\psi(z_j) \big) \Big)\\
                    & = \psi(z_j) +   2 \Re \Big( \sum_{\al = 1}^n \frac{\pa \psi}{\pa z_{\al}}(z_j) z_{\al} \Big) \big(-\psi(z_j) \big) + \big(\psi(z_j)\big)^2
\,O(1).
\end{align*}
Then
\[
\ti \psi_j(z) = \psi \circ T^{-1}_j(z) \big / (-\psi(z_j)) = -1 +  2 \Re \Big( \sum_{\al = 1}^n \frac{\pa \psi}{\pa z_{\al}}(z_j) z_{\al} \Big) + \big(-\psi(z_j)\big)
\,O(1)
\]
is again a defining function for $D_j$ in $(U \cap D)_j$ and in the limit it can be seen that these defining functions converge to
\[
\psi_{\infty}(z) = -1 +  2 \Re \Big( \sum_{\al = 1}^n \frac{\pa \psi}{\pa z_{\al}}(z_0) z_{\al} \Big).
\]
in the $C^2$ topology on every compact set in $\mbf C^n$. In particular this implies that the domains $D_j$ converge to the half space
\[
\cal H = \bigg \{ z \in \mbf C^n : 2 \Re \Big( \sum_{\al = 1}^n \frac{\pa \psi}{\pa z_{\al}}(z_0) z_{\al} \Big) - 1 < 0 \bigg \}
\]
in the Hausdorff sense. To see this let $K \subset \cal H$ be compact. Then $\psi_{\infty}(K) < 0$ and since $K \subset T_j(U)$ for large $j$ it follows that $\ti
\psi_j(K)  < 0$ for all large $j$. Conversely if $K$ is compactly contained in $D_j$ then $\ti \psi_j(K) < -c < 0$ for some uniform $c = c(K) > 0$. Passing to the limit it
follows that $\psi_{\infty}(K) \le -c < 0$ which exactly means that $K$ is compactly contained in $\cal H$. Exactly the same argument shows that $(V \cap D)_j$ converges
to $\cal H$ as well in the Hausdorff sense where $V$ is as in proposition 3.1.

\medskip

Let $G_j$ be the Green function for $D_j$ and let $\La_j$ be the associated Robin function. Likewise let $G_{\cal H}$ be the Green
function for the half space $\cal H$ and $\La_{\cal H}$ the corresponding Robin function. For brevity let $\tilde D = U \cap D$ where $U$ is as above and $\ti D_j =
T_j(U \cap D)$. Finally let $\ti G_j, \ti \La_j$ be the Green function and the associated Robin function for $\ti D_j$. Using (1.1) it can be seen that
\begin{equation}
G_j \big( T_j(z), T_j(p) \big) = G_D(z, p) \, \big( \psi(z_j) \big)^{2n - 2}
\end{equation}
and that
\[
\La_j \big( T_j(p) \big) = \La(p) \, \big( \psi(z_j) \big)^{2n - 2}
\]
which can be rewritten as
\begin{equation}
\La_j(p) = \La \big( z_j - p \, \psi(z_j) \big) \, \big( \psi(z_j) \big)^{2n - 2}
\end{equation}
for all $j$ and $p \in D_j$.

\begin{prop}
For every $p \in \cal H$, $\big\{ G_j(z, p) \big\}$ has a subsequence that converges uniformly on compact subsets of $\cal H \sm \{p\}$ to a function $\ti G(z, p)$
which is harmonic on $\cal H \sm \{p\}$.
\end{prop}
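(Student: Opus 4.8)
The plan is to recognize this as a Montel-type normal families statement for harmonic functions and to extract the subsequence by the Arzel\`a--Ascoli theorem, after first securing a uniform bound on compact subsets of $\cal H \sm \{p\}$. First I would fix $p \in \cal H$. Since $D_j \ra \cal H$ in the Hausdorff sense, any compact $K \subset \cal H$ satisfies $K \subset D_j$ for all large $j$ (this is precisely the convergence established just above the statement); in particular $p \in D_j$ eventually, and for any compact $K \subset \cal H \sm \{p\}$ the function $G_j(\cdot, p)$ is defined and harmonic on a neighbourhood of $K$ once $j$ is large. So the objects whose convergence we want are genuinely in place on each fixed compactum for all but finitely many $j$.

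The key step is a uniform upper bound, and here the point is that it comes essentially for free. Writing $G_j(z, p) = \vert z - p \vert^{-2n + 2} + H_j(z, p)$ with $H_j(\cdot, p)$ harmonic on all of $D_j$, I would note that $H_j(\cdot, p)$ extends continuously to $\pa D_j$ (the domains $D_j = T_j(D)$ are regular, since regularity is preserved by the affine maps $T_j$) and equals $-\vert z - p \vert^{-2n + 2} < 0$ there. As each $D_j$ is bounded, the maximum principle forces $H_j(\cdot, p) < 0$ throughout $D_j$, so that
\[
0 < G_j(z, p) < \vert z - p \vert^{-2n + 2} \quad \text{on } D_j \sm \{p\},
\]
the lower bound coming from the minimum principle applied to the positive Green function. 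Since the dominating term $\vert z - p \vert^{-2n + 2}$ does not depend on $j$ and is bounded on any compact $K \subset \cal H \sm \{p\}$ by $\big( \mathrm{dist}(K, p) \big)^{-2n + 2}$, the family $\{ G_j(\cdot, p) \}$ is uniformly bounded on compact subsets of $\cal H \sm \{p\}$.

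Finally I would pass to the limit. Interior gradient estimates for harmonic functions convert this local uniform boundedness into local equicontinuity, so Arzel\`a--Ascoli produces, on each member of a compact exhaustion of $\cal H \sm \{p\}$, a uniformly convergent subsequence; a diagonal argument then yields a single subsequence converging uniformly on all compact subsets of $\cal H \sm \{p\}$ to a limit $\ti G(z, p)$, which is harmonic because the mean value property survives locally uniform convergence (equivalently, by Weyl's lemma). I do not anticipate any serious analytic obstacle: the only points needing care are that $G_j(\cdot, p)$ really lives on the relevant compacta for large $j$, which the Hausdorff convergence $D_j \ra \cal H$ guarantees, and that the clean bound $G_j < \vert z - p \vert^{-2n + 2}$ does the work which might otherwise have required invoking the localization of Proposition 3.1.
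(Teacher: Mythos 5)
Your proof is correct, and it takes a genuinely more direct route than the paper's. The paper never bounds $G_j$ itself: it invokes Proposition 3.1 to sandwich $G_j(z,p)$ between the Green function $\ti G_j(z,p)$ of the localized scaled domain $\ti D_j = T_j(U \cap D)$ and $\ti G_j(z,p) + C\big(\psi(z_j)\big)^{2n-2}$, applies the maximum-principle bound $0 \le \ti G_j(z,p) \le \vert z-p\vert^{-2n+2}$ and the normal-families argument to $\ti G_j$, and only then transfers the convergence to $G_j$ using $\big(\psi(z_j)\big)^{2n-2} \ra 0$. Your observation that the same two-sided bound holds for $G_j$ directly --- the harmonic part $H_j(\cdot,p)$ has negative boundary data, hence is negative inside $D_j$, with no localization needed --- makes Proposition 3.1 superfluous for this particular statement, and the Montel/Arzel\`a--Ascoli extraction with a diagonal argument is then standard. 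What the paper's detour buys is not Proposition 3.2 itself but the sequel: by extracting the subsequence from $\ti G_j$ and sandwiching, the paper obtains for free that $\ti G_j$ and $G_j$ converge to the \emph{same} limit $\ti G$ along that subsequence, and this identification is precisely what is used in Proposition 3.5, whose barrier estimates are carried out on the localized domains $\ti D_j$ (only their boundaries are $C^2$-convergent to $\pa \cal H$; nothing is assumed about $\pa D$ away from $\Gamma$), and hence in identifying $\ti G = G_{\cal H}$ in Proposition 3.3. So if your version were adopted, the comparison between $G_j$ and $\ti G_j$ would still have to be established before Proposition 3.5; as a proof of the stated proposition, however, yours is complete and arguably cleaner.
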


\begin{proof}
First assume that $p = 0 \in D_j$ for all $j$ which implies that $G_j(z, 0)$ is well defined on $D_j \sm \{0\}$. Let $K$ be a compact subset of $\cal H \sm \{0\}$. Then $K
\subset T_j(V \cap D) \subset \ti D_j \subset D_j$ for large $j$. For $z \in K$ and $j$ large, proposition 3.1
shows that
\[
G_{\ti D}\big( T^{-1}_j(z), z_j) \big) \le G_D \big(T^{-1}_j(z), z_j \big) \le G_{\ti D}\big( T^{-1}_j(z), z_j) \big) + C
\]
for some uniform constant $C > 0$ and this is equivalent to
\begin{equation}
\ti G_j(z, 0) \le G_j(z, 0) \le \ti G_j(z, 0) + C \, \big( \psi(z_j) \big)^{2n - 2}
\end{equation}
by (3.1). Now $\ti G_j(z, 0)$ is harmonic on $\ti D_j \sm \{0\}$ and satisfies $0 \le \ti G_j(z, 0) \le \vert z \vert^{-2n + 2}$ there by the maximum principle. In
particular, $\big\{\ti G_j(z, 0) \big\}$ is uniformly bounded on $K$ and hence it admits a subsequence that converges uniformly on compact subsets of $\cal H \sm \{0\}$ to
a function $\ti G$ that is harmonic on $\cal H \sm \{0\}$. By (3.3) the same is therefore true of $G_j(z, 0)$.

\medskip

In general if $p \in \cal H$ and $K \subset \cal H \sm \{p\}$ is compact, then both $K$ and $p$ are contained in $T_j(V \cap D) \subset \ti D_j \subset D_j$ for $j$ large
so that $G_j(z, p)$ is well defined on $K$. As before
\[
G_{\ti D}\big( T^{-1}_j(z), T^{-1}_j(p) \big) \le G_D \big(T^{-1}_j(z), T^{-1}_j(p) \big) \le G_{\ti D}\big( T^{-1}_j(z), T^{-1}_j(p)) \big) + C
\]
for some uniform $C > 0$ and $j$ large. This implies that
\begin{equation}
\ti G_j(z, p) \le G_j(z, 0) \le \ti G_j(z, p) + C \, \big( \psi(z_j) \big)^{2n - 2}
\end{equation}
as before. Since $\ti G_j(z, p)$ is harmonic on $\ti D_j \sm \{p\}$ and satisfies $0 \le \ti G_j(z, p) \le \vert z - p \vert^{-2n + 2}$ there by the maximum principle, it
follows that $\big\{ \ti G_j(z, p) \big\}$ is uniformly bounded on $K$. In particular there is a subsequence that converges uniformly on compact subsets of
$\cal H \sm \{p\}$ to $\ti G(z, p)$ that is harmonic on $\cal H \sm \{p\}$. By (3.4) the same holds for $\big\{ G_j(z, p) \big\}$.
\end{proof}

\begin{prop}
$\ti G(z, p)$ is the Green function for $\cal H$ with pole at $p$, i.e., $\ti G(z, p) = G_{\cal H}(z, p)$.
\end{prop}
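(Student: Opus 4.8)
The plan is to show that the limit function $\ti G(z,p)$ obtained in Proposition 3.3 satisfies the three defining properties of the Green function for $\cal H$: it is harmonic on $\cal H \sm \{p\}$, it has the correct singularity $\vert z - p \vert^{-2n+2}$ at $p$, and it vanishes as $z \ra \pa \cal H$. Harmonicity is already established in Proposition 3.3, so the work reduces to identifying the singularity and verifying the boundary values, after which uniqueness of the Green function (on a bounded regular domain, or here by a Phragm\'en--Lindel\"of type argument adapted to the half space) forces $\ti G(z,p) = G_{\cal H}(z,p)$.

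First I would extract the singularity. Writing $G_j(z,p) = \vert z - p \vert^{-2n+2} + \La_j(p) + h_{j,p}(z)$ as in (1.1), where $h_{j,p}$ is harmonic on $D_j$ with $h_{j,p}(p) = 0$, the point is that the singular part $\vert z - p \vert^{-2n+2}$ is independent of $j$, so the harmonic pieces $\La_j(p) + h_{j,p}(z)$ converge (along the subsequence) uniformly on compact subsets of $\cal H \sm \{p\}$ to a function that is in fact harmonic across $p$ as well, by Harnack/removable-singularity considerations for the uniformly bounded harmonic sequence. Hence $\ti G(z,p) - \vert z - p \vert^{-2n+2}$ extends harmonically to a neighbourhood of $p$, which is exactly the required singularity condition. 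This step should be routine once the decomposition (1.1) is invoked termwise.

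The boundary condition is the main obstacle. I need $\ti G(z,p) \ra 0$ as $z \ra \pa \cal H$, and the difficulty is that convergence of $G_j$ was only obtained on compact subsets of the open half space, giving no direct control near $\pa \cal H$. The natural route is to use the two-sided sandwich already available: the estimate (3.4) compares $G_j(z,p)$ with $\ti G_j(z,p)$, the Green function for the localized scaled domain $\ti D_j = T_j(U \cap D)$, up to the error $C\,(\psi(z_j))^{2n-2} \ra 0$. Since the domains $\ti D_j$ (equivalently $(V \cap D)_j$) converge to $\cal H$ in the Hausdorff sense, one expects $\ti G_j(z,p) \ra G_{\cal H}(z,p)$, and the boundary decay of $\ti G_j$ near the $C^2$-smooth converging boundaries can be made uniform using a barrier at $\pa \cal H$. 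Concretely, I would construct an explicit harmonic (or superharmonic) barrier for the half space $\cal H$ at a boundary point, transport it to the nearby approximating boundaries, and conclude via the maximum principle that $\ti G_j(z,p)$, and hence $\ti G(z,p)$, is dominated near $\pa \cal H$ by a quantity tending to $0$. Because $\cal H$ is a half space, such a barrier is available in closed form from the reflection formula in (1.4), which should make the estimate clean.

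Finally, with harmonicity, the correct singularity, and vanishing boundary values in hand, I would invoke uniqueness to conclude $\ti G(z,p) = G_{\cal H}(z,p)$; in particular the limit is independent of the subsequence, so the full sequence $\{G_j(z,p)\}$ converges. I expect the harmonicity and singularity steps to be short, and essentially all the genuine content to lie in the boundary-vanishing estimate, where the interplay between the Hausdorff convergence of the $\ti D_j$ and the uniformity of the barrier must be handled with care.
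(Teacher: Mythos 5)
Your overall architecture coincides with the paper's: harmonicity comes from the subsequential limit (Proposition 3.2), the singularity at $p$ is handled by splitting off $\vert z - p \vert^{-2n+2}$ and applying normal-families/maximum-principle arguments to the uniformly bounded harmonic remainders $G_j(z,p) - \vert z - p\vert^{-2n+2}$ (this step of yours is correct and is essentially the paper's argument), and the boundary vanishing is reduced, via the sandwich estimate comparing $G_j$ with the Green functions $\ti G_j$ of the localized scaled domains $\ti D_j$, to a uniform decay estimate for $\ti G_j$ near $\pa \cal H$. Your remark that uniqueness on the unbounded half space needs a Phragm\'en--Lindel\"of type argument is also well taken (the bound $0 \le \ti G \le \vert z - p \vert^{-2n+2}$ supplies the required decay at infinity).

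The gap is in the decay estimate itself, which is where the real content of the paper's proof lies (Lemma 3.4 and Proposition 3.5). The barrier you propose --- the half-space reflection barrier from (1.4), ``transported to the nearby approximating boundaries'' --- fails, for a concrete geometric reason: the domains $\ti D_j$ are not contained in $\cal H$, nor is $\ti D_j$ contained in the tangent half space at any of its own boundary points, because the defining functions are only $C^2$ and $\pa \ti D_j$ protrudes past any such hyperplane at quadratic order. Hence any harmonic barrier vanishing on a hyperplane is strictly negative on part of $\pa \ti D_j$, precisely where $\ti G_j = 0$, and the maximum-principle comparison collapses. If one instead shifts the hyperplane outward by the protrusion amount $O(\al^2)$ ($\al$ the size of the comparison region), the shifted linear barrier can no longer be normalized to dominate $\ti G_j$ on the part of $\pa B(\pi_j(z), \al)$ lying inside $\ti D_j$ near the ``edge'' where that sphere approaches the hyperplane, and the factor $\de(z)$ in the estimate is lost. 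The paper's device is exactly designed to avoid this: Lemma 3.4 constructs the barrier as the Kelvin transform of $x_m$ with respect to an \emph{exterior tangent sphere}, which exists with a radius uniform in $j$ because the $\ti \psi_j$ converge in $C^2$; unlike a hyperplane, the tangent ball $B(c_j, \al)$ stays entirely outside $\ti D_j$, so the lens $B(\pi_j(z), \al) \cap B^c(c_j, \al)$ has no edge on which the barrier degenerates, and property (iv) then gives $\ti G_j(z,p) \lesssim \de(z) \vert z - p \vert^{-2n+1}$ uniformly in $j$ (Proposition 3.5), which passes to the limit. Your program can be completed only after substituting this (or an equivalent mechanism, e.g.\ comparison with the harmonic measure of an enlarged half-ball estimated via Schwarz reflection and interior gradient bounds) for the half-space barrier; as written, the assertion that the estimate is ``clean'' because $\cal H$ is a half space sits exactly at the missing step.
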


\no The proof of this requires a quantitative understanding of the behaviour of $\ti G(z, p)$ near $\pa \cal H$ and for this an auxiliary step from \cite{Kr}
will be needed. Once this has been proved it will follow that every convergent subsequence of $G_j(z, p)$ has a unique limit, namely $G_{\cal H}(z, p)$. Hence $G_j(z,
p) \ra G_{\cal H}(z, p)$ uniformly on compact subsets of $\cal H \sm \{p\}$. A point $x \in \mbf R^m$ will be written as $x = ('x, x_m) \in \mbf R^{m - 1} \times \mbf
R$.

\begin{lem}
Let $B \subset \mbf R^m$ be the unit ball. Let $\ti B = B(('0, 1), 1), \Om = B \cap \ti B^c$ and $\tau = \pa B \sm \ti B$. Then there is a harmonic function
$H$ on a neighbourhood of $\ov \Om$ which is non-negative on $\ov \Om$ and a constant $c = c(B) > 0$ such that

\begin{enumerate}
\item[(i)] $H(0) = 0$

\item[(ii)] $H(x) > 0$ if $x \in \ov \Om \sm \{0\}$

\item[(iii)] $H(x) \ge 1$ for all $x \in \tau$

\item[(iv)] $H('0, -t) \le c t$ for all $0 \le t \le 1$.
\end{enumerate}
\end{lem}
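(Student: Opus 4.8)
The plan is to write $H$ down explicitly as a perturbation of the standard exterior-ball barrier. Since $\ti B = B(('0,1),1)$ and $\Om \subset \ti B^c$, the Newtonian potential centred at the centre of $\ti B$,
\[
\Phi(x) = |x - ('0,1)|^{-m+2},
\]
is harmonic on a neighbourhood of $\ov \Om$ (the centre $('0,1)$ lies in $\ti B$, hence not in $\ov \Om$), equals $1$ on $\pa \ti B$, and is $< 1$ on $\ti B^c$. Thus $1 - \Phi \ge 0$ on $\ov \Om$, with equality precisely on $\pa \ti B$. The difficulty with taking $H = 1 - \Phi$ outright is condition (ii): this function vanishes on the entire spherical cap $\pa \ti B \cap \ov B$, not just at $0$. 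To cure this I would add a harmonic tilt that is strictly positive on that cap away from the origin. Writing $x = ('x, x_m)$ and expanding $|x - ('0,1)|^2 = |x|^2 - 2 x_m + 1$, one sees that $\ti B^c = \{ |x|^2 \ge 2 x_m \}$, so on $\ti B^c$ (hence on $\ov \Om$) one has $x_m \le |x|^2/2$, while on $\pa \ti B$ the equality $x_m = |x|^2/2$ forces $x_m > 0$ except at $0$. This identifies the linear harmonic function $x_m$ as the right corrector, and I set
\[
H(x) = 1 - |x - ('0,1)|^{-m+2} + \eta \, x_m
\]
for a small constant $\eta > 0$ to be fixed, which is harmonic on a neighbourhood of $\ov \Om$.

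The heart of the verification is a single global lower bound. Put $s = |x - ('0,1)|^2 - 1 = |x|^2 - 2 x_m \ge 0$ on $\ov \Om$, so that $\Phi = (1+s)^{-(m-2)/2}$. Since $f(s) = 1 - (1+s)^{-(m-2)/2}$ is concave and vanishes at $s = 0$, it dominates its chord on the bounded range $s \in [0, S]$ over $\ov \Om$, giving $1 - \Phi \ge c_2 \, s$ for some $c_2 = c_2(B) > 0$. Hence
\[
H \ge c_2(|x|^2 - 2 x_m) + \eta \, x_m = c_2 |x|^2 - (2 c_2 - \eta) x_m,
\]
and, using $x_m \le |x|^2/2$ together with the choice $0 < \eta \le 2 c_2$, this collapses to the clean estimate $H(x) \ge (\eta/2)\,|x|^2$ throughout $\ov \Om$. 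This at once yields $H(0) = 0$ and $H > 0$ on $\ov \Om \sm \{0\}$, i.e. (i) and (ii). For (iii), on $\tau \subset \pa B$ one has $|x| = 1$, so $H \ge \eta/2 > 0$; rescaling $H \mapsto (2/\eta)H$, which preserves (i)--(ii), forces $H \ge 1$ on $\tau$.

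Condition (iv) is then read off along the inner axis: $H('0,-t) = 1 - (1+t)^{-m+2} - \eta t$, and since $t \mapsto 1 - (1+t)^{-m+2}$ is concave with derivative $m-2$ at the origin, it is bounded by $(m-2)t$ on $[0,1]$. As the tilt $-\eta t$ only lowers the value, $H('0,-t) \le (m-2)t$, and after the rescaling of the previous step this is the required $c\,t$ with $c = 2(m-2)/\eta$.

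The one point that requires care --- and the \emph{only} real obstacle --- is precisely the tension in condition (ii): the naive exterior-ball barrier $1 - \Phi$ vanishes along a whole cap, so one must introduce the tilt $\eta x_m$ and then confirm that it does not destroy non-negativity anywhere on $\Om$, in particular where $x_m < 0$ (as along the negative axis). The estimate $H \ge (\eta/2)|x|^2$ resolves this in one stroke, and the smallness of $\eta$ relative to $2c_2$ is exactly what makes it go through. (For $m = 2$ the same scheme works with $\Phi$ replaced by the logarithmic potential $\log|x - ('0,1)|$, although only $m = 2n \ge 4$ is actually needed here.)
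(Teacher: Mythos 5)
Your construction is correct, but it is genuinely different from the one in the paper. The paper obtains $H$ as a constant multiple of the Kelvin transform of the coordinate function $f(x) = x_m$ with respect to the sphere $\pa \ti B$, which produces the single rational harmonic function
\[
f^*(x) = \frac{(x_m - 1) + \vert x - ('0,1) \vert^2}{\vert x - ('0,1) \vert^m} = \frac{\vert x \vert^2 - x_m}{\vert x - ('0,1) \vert^m};
\]
positivity on $\ov \Om \sm \{0\}$ is then checked by an elementary case analysis on the sign of $x_m$, and (iv) comes out exactly as $H('0,-t) = ct/(1+t)^{m-1} \le ct$. You instead superpose the standard exterior-sphere barrier $1 - \vert x - ('0,1)\vert^{-m+2}$ with a small linear tilt $\eta x_m$, and the heart of your argument is the global quadratic bound $H \ge (\eta/2)\vert x \vert^2$ on $\ov \Om$, obtained from the chord estimate for the concave function $s \mapsto 1 - (1+s)^{-(m-2)/2}$ together with the inequality $x_m \le \vert x \vert^2/2$ valid on $\ti B^c$. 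What your route buys is that this one quantitative inequality delivers non-negativity, (i), (ii) and (iii) simultaneously, and it is stronger than pointwise positivity (hence more robust under perturbation of the geometry); what the paper's route buys is a more compact closed formula and sharper constants in (iv), with the positivity check reduced to inspecting signs. The two are close in spirit --- in both cases the key is a harmonic function vanishing exactly at the origin on $\ov \Om$, bounded below on $\tau$, with linear decay along the inner normal, followed by a final rescaling by a constant --- and both implicitly use $m \ge 3$, which is harmless here since $m = 2n \ge 4$; your closing remark about the logarithmic substitute for $m = 2$ addresses the only degenerate case.
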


\begin{proof}
The Kelvin transform of $f(x) = x_m$ with respect to $\pa \ti B$ is given by
\[
f^*(x) = \frac{f(x^*)}{\vert x - ('0, 1) \vert^{m - 2}}
\]
for $x \in \mbf R^m \sm \{('0, 1)\}$ and
\[
x^* = \frac{x - ('0, 1)}{\vert x - ('0, 1) \vert^2} + ('0, 1)
\]
which is the inversion of $x$ with respect to $\pa \ti B$. Thus
\[
f^*(x) = \frac{(x_m - 1) + \vert x - ('0, 1) \vert^2}{\vert x - ('0, 1) \vert^m}
\]
and since $f$ is harmonic on $\mbf R^m$, it follows that $f^*$ is harmonic on $\mbf R^m \sm \{('0, 1)\}$. Evidently $f^*(0) = 0$. Let $x \in \ov \Om \sm \{0\}$. Then
$\vert x - ('0, 1) \vert \ge 1$. Two cases arise -- first, if $x_m > 0$ then
\[
(x_m - 1) + \vert x - ('0, 1) \vert^2 > - 1 + 1 = 0
\]
and hence $f^*(x) > 0$. On the other hand if $x_m < 0$ then
\[
(x_m - 1) + \vert x - ('0, 1) \vert^2 = x_1^2 + x_2^2 + \ldots + x_{m - 1}^2 + x_m(x_m - 1) > 0
\]
and hence $f^*(x) > 0$ again. Finally if $x_m = 0$ then the above expression is the sum of squares of the first $m - 1$ components of $x$ and this is positive since $x
\not= 0$. Thus $f^*(x) > 0$. It follows that there is a constant $c > 0$ such that $c f^* \ge 1$ on $\ov \tau$. Let $H(x) = c f^*(x)$. By construction $H$ satisfies (i),
(ii) and (iii). For (iv) note that if $0 \le t \le 1$ then
\[
H('0, -t) = ct/(1 + t)^{m - 1} \le ct.
\]
\end{proof}

\no In general if $L : \mbf R^m \ra \mbf R^m$ is an affine transformation of the form $L(x) = \al Ax + b$ where $\al > 0$ and $A$ is orthogonal and $\Om' = L(\Om)$,
then $\ti H(x) = H \circ L^{-1}$ is harmonic in a neighbourhood of $\ov \Om'$ and satisfies $\ti H(b) = 0$, $\ti H(x) > 0$ on $\ov \Om' \sm \{b\}$, $\ti H(x) \ge 1$
on $L(\tau)$ and $\ti H(L('0, -t)) \le ct$ for all $0 \le t \le 1$. Now fix $z_0 \in \pa \cal H \sm \{\infty\}$ and a neighbourhood $U$ of $z_0$. Since the defining
functions for $U \cap \pa \ti D_j$ converge to that of $U \cap \pa \cal H$ in the $C^2(\ov U)$ topology, the implicit function theorem shows that for $j$ large there is a
uniform $R > 0$ such that if
$r < R$ and $q_j \in U \cap \pa \ti D_j$, there is a ball of radius $r$ around $c = c(q_j)$ that lies outside $\pa \ti D_j$ and whose closure touches $\pa \ti
D_j$ only at $q_j$.

\begin{prop}
Let $z_0 \in \pa \cal H \sm \{\infty\}$ and $K \subset \cal H$ compact. Then there exists a neighbourhood $U$ of $z_0$ disjoint from $K$ and a constant $C = C(z_0, K)
> 0$ such that
\[
\ti G(z, p) \le C \, \delta(z) \,\vert z - p \vert^{-2n + 1}
\]
where $z \in U \cap \cal H$, $p \in K$ and $\delta(z)$ is the distance of $z$ to $\pa \cal H$.
\end{prop}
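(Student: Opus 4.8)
Because the limit $\ti G$ is not yet known to vanish on $\pa\cal H$ — this is precisely part of what proposition 3.4 will deduce from the present estimate — I cannot apply the maximum principle to $\ti G$ directly. The plan is therefore to prove the bound for the approximating Green functions $\ti G_j$ of the localised domains $\ti D_j$, with a constant that is uniform in $j$, and then let $j \ra \infty$. Recall that each $\ti G_j(\cdot, p)$ vanishes on $\pa\ti D_j$ and satisfies $0 \le \ti G_j(z, p) \le \vert z - p \vert^{-2n+2}$, and that by (3.3), (3.4) and $\big(\psi(z_j)\big)^{2n-2} \ra 0$ the sequence $\ti G_j(\cdot, p)$ has the same limit $\ti G(\cdot, p)$ as $G_j(\cdot, p)$, uniformly on compact subsets of $\cal H \sm \{p\}$.

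First I would fix the geometry. As $z_0 \in \pa\cal H$ and $K \subset \cal H$ is compact, $\de_0 := {\rm dist}(z_0, K) > 0$. Pick $r > 0$ smaller than the uniform exterior-ball radius $R$ of the remark preceding the proposition and smaller than $\de_0 / 4$, and set $U = B(z_0, r/2)$. Then $U$ is disjoint from $K$; every $z \in U \cap \cal H$ satisfies $\de(z) \le \vert z - z_0 \vert < r/2$; and for large $j$ the foot $q_j = \pi_j(z) \in \pa\ti D_j$ of the perpendicular from $z$ lies in $B(z_0, r)$, so that $B(q_j, r) \subset B(z_0, 2r)$ is disjoint from $K$ and $\vert w - p \vert \ge \de_0/2 =: d_1$ for all $w \in B(q_j, r)$ and $p \in K$.

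The heart of the argument is a barrier domination. For large $j$ the remark supplies, at each $q_j \in U \cap \pa\ti D_j$, an exterior ball $B(c(q_j), r)$ of radius $r$ meeting $\ov{\ti D_j}$ only at $q_j$, with centre $c(q_j) = q_j + r\nu_j$ where $\nu_j$ is the outer unit normal at $q_j$. Let $L_j(x) = r A_j x + q_j$ with $A_j$ orthogonal and $A_j e_{2n} = \nu_j$, so that $L_j(B) = B(q_j, r)$, $L_j(\ti B) = B(c(q_j), r)$ and $L_j(\tau) = \pa B(q_j, r) \sm B(c(q_j), r)$, and put $\ti H_j = H \circ L_j^{-1}$ for the barrier $H$ of lemma 3.4. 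Set $M := d_1^{-2n+2}$ and consider $\ti G_j(\cdot, p) - M\,\ti H_j$ on the region $W_j := \ti D_j \cap B(q_j, r)$. Since the exterior ball misses $\ti D_j$ one has $W_j \subset L_j(\Om) = B(q_j, r) \sm B(c(q_j), r)$, so $\ti H_j$ is harmonic and non-negative on $\ov{W_j}$; moreover $p \notin W_j$, so the difference is harmonic on $W_j$. On $\pa\ti D_j \cap \ov{B(q_j, r)}$ it equals $-M\,\ti H_j \le 0$ because $\ti G_j$ vanishes there, while on $\ov{\ti D_j} \cap \pa B(q_j, r) \subset L_j(\tau)$ one has $\ti H_j \ge 1$ and $\ti G_j(\cdot, p) \le \vert \cdot - p \vert^{-2n+2} \le M$, so the difference is again $\le 0$. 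The maximum principle therefore yields $\ti G_j(z, p) \le M\,\ti H_j(z)$ on $W_j$.

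It remains to extract the linear decay and pass to the limit. For $z \in U \cap \cal H$ one has $z = q_j - \de_j(z)\nu_j$, hence $L_j^{-1}(z) = ('0, -\de_j(z)/r)$, and property (iv) of lemma 3.4 gives $\ti H_j(z) = H('0, -\de_j(z)/r) \le (c/r)\,\de_j(z)$ once $\de_j(z) \le r$, which holds for large $j$ since $\de_j(z) \ra \de(z) < r/2$. Thus $\ti G_j(z, p) \le (Mc/r)\,\de_j(z)$ for all large $j$, and letting $j \ra \infty$ (using $\ti G_j(z, p) \ra \ti G(z, p)$ and $\de_j(z) \ra \de(z)$) gives $\ti G(z, p) \le (Mc/r)\,\de(z)$ uniformly for $z \in U \cap \cal H$ and $p \in K$. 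Finally $\vert z - p \vert$ is bounded above on $U \times K$, so $\vert z - p \vert^{-2n+1}$ is bounded below there and the last bound is absorbed into the stated $\ti G(z, p) \le C\,\de(z)\,\vert z - p \vert^{-2n+1}$. The delicate point throughout is that the transplanted barrier controls $\ti G_j$ only through its known vanishing on $\pa\ti D_j$ — information unavailable for $\ti G$ a priori — so the uniformity of $M$ in $j$, which rests on the uniform exterior-ball radius and the uniform separation of $B(q_j, r)$ from $K$, is exactly what makes the limit passage legitimate.
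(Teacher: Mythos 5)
Your proof is correct and follows essentially the same route as the paper: the barrier of lemma 3.4 transplanted by an affine map to a uniform exterior tangent ball at the foot point $\pi_j(z)$, a maximum-principle comparison of $\ti G_j(\cdot, p)$ against a constant multiple of that barrier on the lens-shaped region cut out by the two balls, property (iv) for the linear decay in $\de_j(z)$, and the passage $j \ra \infty$ using $\ti G_j \ra \ti G$ and $\de_j(z) \ra \de(z)$. The only deviation is bookkeeping: you keep the ball radius fixed and recover the factor $\vert z - p \vert^{-2n+1}$ at the end from the boundedness of $\vert z - p \vert$ on $U \times K$, whereas the paper takes the radius $\al$ proportional to $\vert z - p \vert$ so that this factor appears intrinsically; since the constant $C$ is allowed to depend on $z_0$ and $K$, both versions yield the stated estimate (and both suffice for the later applications in propositions 3.3 and 3.6).
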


\begin{proof}
Fix a neighbourhood $U$ of $z_0$ disjoint from $K$ and let $R > 0$ be as above. Fix $p \in K$ and without loss of generality assume that
\[
R < \big( {\rm dist}(z_0, K) \big) \big/ 2.
\]
Choose $s > 0$ sufficiently small that
\[
\de(z) < \big( R \, \vert z - p \vert \big) \big/ \big( 4 \, \vert z_0 - p \vert \big)
\]
for all $z \in B(z_0, s)$ and $p \in K$. By shrinking $s$ if needed we may assume that $\pi(z) \in U$ whenever $z \in B(z_0, s) \cap D$ and that $s < {\rm dist}(z_0,
K)$. Now fix $z \in B(z_0, s) \cap \cal H$ and define
\[
\al = \big( R \, \vert z - p \vert \big) \big/ \big( 2 \, \vert z_0 - p \vert \big) < \vert z - p \vert/4
\]
and note that
\[
2 \, \de(z) < \al \le \frac{R \big( \vert z - z_0 \vert + \vert z_0 - p \vert \big)}{2\vert z_0 - p \vert} < R
\]
since $\vert z - z_0 \vert \le s < {\rm dist}(z_0, K)$. Let $\pi_j(z) \in U \cap \pa \ti D_j$ realise the distance between $z$ and $U \cap \pa \ti D_j$ and let
$\de_j(z) = {\rm dist}(z, U \cap \pa \ti D_j) = \vert \pi_j(z) - z \vert$ and note that $\de_j(z) \ra \de(z)$ as $j \ra \infty$. Define $\Om_j
= B(\pi_j(z), \al) \cap B^c(c_j, \al)$ where $c_j = c_j(\pi_j(z))$. If $w \in \pa \Om_j \cap \ti D_j$ then
\[
\vert z - w \vert \le \vert z - \pi_j(z) \vert + \vert \pi_j(z) - w \vert = \de_j(z) + \al < 2 \al
\]
and therefore
\begin{align*}
\vert p - w \vert & \ge \vert p - z \vert - \vert z - w \vert\\
                  & \ge \vert p - z \vert - 2 \al \ge \vert p - z \vert/2.
\end{align*}
This shows that
\begin{equation}
\ti G_j(w, p) \le \vert w - p \vert^{-2n + 2} \le 2^{2n - 2} \vert z - p \vert^{-2n + 2}
\end{equation}
for $w \in \pa \Om_j \cap \ti D_j$. Let $H_j$ be the harmonic function on $\Om_j$ whose existence is guaranteed by lemma 3.4 and the remark after it. Since $H_j(w)
\ge 1$ on $\pa \Om_j \cap \ti D_j$ it follows from (3.5) that
\[
\ti G_j(w, p) \le H_j(w)\, 2^{2n - 2} \vert z - p \vert^{-2n + 2}
\]
for $w \in \pa \Om_j \cap \ti D_j$. But if $w \in \Om_j \cap \pa \ti D_j$ then
\[
0 = \ti G_j(w, p) \le  H_j(w)\, 2^{2n - 2} \vert z - p \vert^{-2n + 2}
\]
and hence the maximum principle shows that
\[
\ti G_j(w, p) \le  H_j(w)\, 2^{2n - 2} \vert z - p \vert^{-2n + 2}
\]
for all $w \in \Om_j \cap \ti D_j$. Let $n_j(\pi_j(z))$ be the outward real normal to $\pa \ti D_j$ at $\pi_j(z)$ and define $z_j = \pi_j(z) - \de(z) \,
n_j(\pi_j(z))$. Since $z_j \ra z$ and $\ti D_j$ converge to $\cal H$ in the Hausdorff sense it follows that $z_j \in D_j$ for all large $j$. Hence
\begin{equation}
\ti G_j(z_j, p) \le  H_j(z_j)\, 2^{2n - 2} \vert z - p \vert^{-2n + 2}
\end{equation}
for all large $j$. But property (iv) of $H$ from lemma 3.4 shows that $H_j(z_j) \le c \,\de(z) / \al$ and with this (3.6) becomes
\[
\ti G_j(z_j, p) \le (c \,\de(z) / \al) \, 2^{2n - 2} \vert z - p \vert^{-2n + 2}
\]
which is the same as
\[
\ti G_j(z_j, p) \le C \,\de(z)\, \vert z - p \vert^{-2n + 1}
\]
after using the definition of $\al$ with $C = (c/R) 2^{2n - 1} \big( {\rm dist}(z_0, K) \big)^{-1} > 0$. It remains to observe that $\ti G_j(z_j, p) \ra \ti G(z, p)$ which
completes the proof.
\end{proof}

\no To continue with the proof of proposition 3.3, fix $p \in \cal H$. Since $\ti G(z, p)$ is already known to be harmonic on $\cal H \sm \{p\}$ it suffices to show that
$\ti G(z, p) - \vert z - p \vert^{-2n + 2}$ is harmonic near $p$ and that $\ti G(z, p) \ra 0$ as $z \ra \pa \cal H$. Fix a small ball $B = B(p, r) \subset \cal H$ and note
that $H_j(z, p) =  G_j(z, p) - \vert z - p \vert^{-2n + 2}$ is harmonic on a neighbourhood of $\ov B$ for all $j$. Moreover
\[
H_j(z, p) \ra \ti G(z, p) - \vert z - p \vert^{-2n + 2}
\]
uniformly on $\pa B$ and hence on $\ov B$ by the maximum principle. It follows that $\ti G(z, p) - \vert z - p \vert^{-2n + 2}$ is harmonic near $p$. For the other claim,
note that
\[
0 \le \ti G(z, p) \le \vert z - p \vert^{-2n + 2}
\]
for all $z \in \cal H \sm \{p\}$ since the same holds for all $G_j(z, p)$. For a given $\ep > 0$ it is therefore possible to choose a large ball $B(p, R)$ such that $\ti
G(z, p) < \ep$ for $z \in \cal H \sm B(p, R)$. On the other hand there is a finite cover of $\pa \cal H \cap \ov B(p, R)$ by open balls in which the estimate
\[
\ti G(z, p) \lesssim \de(z) \, \vert z - p \vert^{-2n + 1}
\]
of proposition 3.5 holds. Combining these observations it follows that if $z$ is close enough to $\pa \cal H$ then $\ti G(z, p) < \ep$ and this completes the proof
of proposition 3.3.

\begin{prop}
$G_j(z, p) - \vert z - p \vert^{-2n + 2}$ converges to $G_{\cal H}(z, p) - \vert z - p \vert^{-2n + 2}$ uniformly on compact subsets of $\cal H \times \cal H$.
\end{prop}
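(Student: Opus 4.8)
The plan is to reduce everything to the convergence already established — namely $G_j(z,p) \to G_{\cal H}(z,p)$ uniformly on compact subsets of $\cal H \sm \{p\}$ for each fixed $p$ — and to remove the diagonal singularity by a maximum principle argument. Write $u_j(z,p) = G_j(z,p) - G_{\cal H}(z,p)$. Since $G_j(\cdot,p)$ and $G_{\cal H}(\cdot,p)$ have the identical singularity $\vert z-p\vert^{-2n+2}$ at $z = p$, the difference $u_j(\cdot,p)$ extends harmonically across $p$; moreover $u_j$ is exactly the difference of the two quantities in the statement, so it suffices to prove $u_j \to 0$ uniformly on $K \times K$ for every compact $K \subset \cal H$. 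Since any compact subset of $\cal H \times \cal H$ lies in such a product, this yields the assertion.

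The first step is to upgrade the per-pole convergence to joint uniform convergence of $G_j$ to $G_{\cal H}$ on compact subsets $L$ of $(\cal H \times \cal H) \sm \{z = p\}$. The ingredients are: (i) the uniform bound $0 \le G_j(z,p) \le \vert z-p\vert^{-2n+2}$ from the maximum principle, which is bounded on $L$; and (ii) the fact that $G_j$ is harmonic in $z$ for fixed $p$ and, by the symmetry $G_j(z,p) = G_j(p,z)$, harmonic in $p$ for fixed $z$. Interior gradient estimates for harmonic functions then bound $\vert \nab_z G_j\vert$ and $\vert \nab_p G_j\vert$ on a slightly smaller set by a fixed multiple of $\sup_L G_j$, which gives a joint Lipschitz bound uniform in $j$ and hence equicontinuity of $\{G_j\}$ on $L$. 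By Arzel\`a--Ascoli every subsequence has a locally uniformly convergent sub-subsequence; the per-pole convergence already forces the limit to equal $G_{\cal H}$ on each slice $\{p = \mathrm{const}\}$, so the limit is $G_{\cal H}$, and uniqueness of the limit gives $G_j \to G_{\cal H}$ uniformly on $L$.

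The second step is to cross the diagonal. Fix a compact $K \subset \cal H$ and choose $r > 0$ so small that the closed $r$-neighbourhood $K_r$ of $K$ is compact in $\cal H$; then $K_r \subset D_j$ for all large $j$ by the Hausdorff convergence $D_j \to \cal H$, so for such $j$ both $G_j(\cdot,p)$ and $G_{\cal H}(\cdot,p)$ are harmonic on $B(p,r) \sm \{p\}$ for every $p \in K$. Consequently $z \mapsto u_j(z,p)$ is harmonic on $B(p,r)$, and the maximum principle gives
\[
\sup_{z \in \ov{B(p,r)}} \vert u_j(z,p) \vert = \max_{z \in \pa B(p,r)} \vert u_j(z,p) \vert .
\]
On the sphere $\pa B(p,r)$ the point $z$ stays at distance exactly $r$ from $p$, so $\{(z,p) : p \in K,\ \vert z-p\vert = r\}$ is a compact subset of $(\cal H \times \cal H) \sm \{z = p\}$, and by the first step the right-hand side tends to $0$ uniformly in $p \in K$. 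This controls $u_j$ on the near-diagonal region $\{(z,p) \in K \times K : \vert z-p\vert \le r\}$. On the complementary region $\{(z,p) \in K \times K : \vert z-p\vert \ge r\}$, which is compact and disjoint from the diagonal, the first step applies directly. These two regions cover $K \times K$, so $u_j \to 0$ uniformly there.

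I expect the first step to be the main obstacle: $G_j$ is only separately harmonic in $z$ and in $p$, not jointly harmonic, so joint equicontinuity must be extracted from the separate interior estimates together with the symmetry of the Green function, and it is the uniform bound $G_j \le \vert z-p\vert^{-2n+2}$ that makes these estimates uniform in $j$. Once joint convergence off the diagonal is in place, the maximum principle handles the diagonal essentially for free.
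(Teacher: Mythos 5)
Your proposal is correct, but it reaches the off-diagonal case by a genuinely different route than the paper. The paper never invokes equicontinuity or Arzel\`a--Ascoli: for $(z_0,p_0)$ off the diagonal it fixes neighbourhoods $U_0, V_0$ with disjoint closures, encloses them in a large box $\Om(R,\eta) \subset \cal H$, and applies the maximum principle in the $z$-variable to the harmonic difference $G_j(\cdot,p)-G_{\cal H}(\cdot,p)$ on that box; the boundary terms are controlled by the crude bound $0 \le G \le \vert z-p\vert^{-2n+2}$ on the part of $\pa \Om(R,\eta)$ far from the origin, and by the quantitative boundary decay $G \lesssim \delta(z)\,\vert z-p\vert^{-2n+1}$ of Proposition 3.5 (applied to $G_{\cal H}$ and, uniformly in $j$, to $G_j$) on the part of $\pa \Om(R,\eta)$ near $\pa \cal H$. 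Sending $\eta \ra 0$ and $R \ra \infty$ gives uniformity in $p \in V_0$ directly. You instead upgrade the per-pole convergence already established in Propositions 3.2--3.3 to joint uniform convergence by a compactness argument, with equicontinuity coming from interior gradient estimates together with the symmetry $G_j(z,p) = G_j(p,z)$; this is softer and reuses no boundary estimates, at the price of leaning on the symmetry of the Green function (which the paper does invoke elsewhere, in Lemma 2.2, so it is available) and of identifying the limit via the earlier propositions. Your treatment of the diagonal --- the maximum principle on the balls $B(p,r)$, $p \in K$, with the sphere bundle $\{(z,p) : p \in K,\ \vert z-p\vert = r\}$ handled by the off-diagonal case --- is essentially the paper's closing argument with round balls in place of its polydisc ``plaques'' $\phi_z(\z) = (z+\rho\z, z)$, so that step is the same idea. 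In sum: the paper's route is quantitative and stays entirely within the maximum-principle/barrier technology it has already built, making the uniformity in $j$ and in the pole explicit; yours is shorter and more elementary given what precedes it, trading explicit estimates for Arzel\`a--Ascoli.
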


\begin{proof}
By a linear change of coordinates we may assume that $\cal H = \{z \in \mbf C^n : \Re z_n < 0\}$. First consider $(z_0, p_0) \in \cal H \times \cal H$ off the diagonal,
i.e., $z_0 \not= p_0$. Let $U_0, V_0 \subset \cal H$ be neighbourhoods of $z_0, p_0$ respectively
that have disjoint closures. For a given $\ep > 0$, choose $R > 0$ large such that
\[
2 \vert z - p \vert^{-2n + 2} < \ep
\]
for all $\vert z \vert > R$ and $p \in V_0$. Define
\[
\Om(\tau, \eta) = \big\{ z \in \mbf C^n : \vert z_j \vert \le 2 \tau \;\, {\rm for}\;\, 1 \le j \le n - 1, \vert \Im z_n \vert \le 2 \tau, -2 \tau \le \Re z_n \le -\eta
\big\}
\]
for $\tau, \eta > 0$. Note that $\Om(\tau, \eta) \subset \cal H$ and that $U_0, V_0 \subset \Om(\tau, \eta)$ by taking $\tau \gg 1$ and $\eta > 0$ small enough. Moreover
for a fixed $\tau, \eta$, $\Om(\tau, \eta) \subset D_j$ for all large $j$. Therefore $G_j(z, p)$ is well defined on $\Om(\tau, \eta)$ for each $p \in V_0$ and $j$ large.
By the maximum principle
\[
\big \vert G_j(z, p) - G_{\cal H}(z, p) \big \vert < 2 \vert z - p \vert^{-2n + 2} < \ep
\]
for $z \in \pa \Om(R, \eta) \sm B(0, R)$ and $p \in V_0$. For $\eta > 0$ small, there is a finite cover of $\pa \Om(R, \eta) \cap B(0, R)$ in which
\[
G_j(z, p) \lesssim \delta(z) \, \vert z - p \vert^{-2n + 1} \;\, {\rm and} \;\, G_{\cal H} \lesssim \delta(z) \, \vert z - p \vert^{-2n + 1}
\]
for $j$ large, each $p \in V_0$ and $z \in \pa \Om(R, \eta) \cap B(0, R)$. This is a consequence of proposition 3.5. Therefore
\[
\big \vert G_j(z, p) - G_{\cal H}(z, p) \big \vert \lesssim \delta(z) \, \vert z - p \vert^{-2n + 1} = \eta \vert z - p \vert^{-2n + 1} < \ep
\]
for $z \in \pa \Om(R, \eta) \cap B(0, R)$ and $p \in V_0$ if $\eta > 0$ is small enough. Now for each $p \in V_0$, the function $G_j(z, p) - G_{\cal H}(z, p)$ is harmonic
in $\Om(R, \eta)$ and in modulus is at most $\ep$ on $\pa \Om(R, \eta)$. By the maximum principle
\[
\big \vert G_j(z, p) - G_{\cal H}(z, p) \big \vert < \ep
\]
for each $p \in V_0$ and $z \in U_0 \subset \Om(R, \eta)$ in particular. Hence
\[
G_j(z, p) - \vert z - p \vert^{-2n + 2} \ra G_{\cal H}(z, p) - \vert z - p \vert^{-2n + 2}
\]
uniformly on compact subsets of $\cal H \times \cal H \sm (\rm diagonal)$.

\medskip

Now fix $(z_0, z_0) \in \cal H \times \cal H$. Let $\cal N$ be a small neighbourhood of $(z_0, z_0)$ in the diagonal in $\cal H \times \cal H$. Then there is a uniform
$\rho > 0$ such that the family $\cal F$ of horizontal plaques parametrised by
\[
\phi_z(\z) = (z + \rho \zeta, z)
\]
where $\z = (\z_1, \z_2, \ldots, \z_n) \in \Delta^n$ and $z \in \cal N$ contains a neighbourhood of $(z_0, z_0)$ along the diagonal possibly smaller than $\cal
N$. By shrinking $\cal N$ if needed it follows that the boundary $\pa \cal F$ of this family of horizontal plaques is disjoint from the diagonal. The argument above shows
that
\[
G_j(z, p) - \vert z - p \vert^{-2n + 2} \ra G_{\cal H}(z, p) - \vert z - p \vert^{-2n + 2}
\]
uniformly on $\pa \cal F$ and hence the maximum principle shows that the convergence is uniform even around $(z_0, z_0)$ and this completes the proof.
\end{proof}

\no {\it Proof of Theorem 1.1:} From (3.2) it follows that
\[
D^{A \ov B} \La_j(0) = (-1)^{\vert A \vert + \vert B \vert} \big( D^{A \ov B} \La(z_j) \big) \big( \psi(z_j) \big)^{2n - 2 + \vert A \vert + \vert B \vert}
\]
where the derivatives are with respect to $p$. Now fix a compactly contained ball $U = B(0, r) \subset \cal H$ and note that $U \subset D_j$ for all $j$ large. For each
such $j$ and $p \in U$, lemma 2.2 shows that
\[
\La_j(p) = \frac{1}{(\sigma_{2n}r)^{2}} \iint_{\partial U \times \partial U} H_j(z, w) \frac{ \big( r^2 - \vert z - p \vert^{2} \big) \big( r^{2}- \vert w - p \vert^{2} \big)}{\vert z- p
\vert^{2n}\vert w - p \vert^{2n}} \; dS_{z} \;dS_{w}
\]
where $H_j(z, w) = G_j(z, w) - \vert z - w \vert^{-2n + 2}$. Differentiating with respect to $p$ under the integral sign shows that
\[
D^{A \ov B} \La_j(p) = \frac{1}{(\sigma_{2n}r)^{2}} \iint_{\partial U \times \partial U} H_j(z, w) D^{A \ov B} \bigg( \frac{r^2 - \vert z - p \vert^{2}}{\vert z - p
\vert^{2n}} \bigg)  \bigg( \frac{r^2 - \vert w - p \vert^{2}}{\vert w - p \vert^{2n}} \bigg)  \; dS_{z} \;dS_{w}.
\]
Since $H_j(z, w)$ converges to $H_{\infty}(z, w) = G_{\cal H}(z, w) - \vert z - w \vert^{-2n + 2}$ uniformly on $\pa U \times \pa U$, the integral above converges to
\[
\frac{1}{(\sigma_{2n}r)^{2}} \iint_{\partial U \times \partial U} H_{\infty}(z, w) D^{A \ov B} \bigg( \frac{r^2 - \vert z - p \vert^{2}}{\vert z - p
\vert^{2n}} \bigg)  \bigg( \frac{r^2 - \vert w - p \vert^{2}}{\vert w - p \vert^{2n}} \bigg)  \; dS_{z} \;dS_{w} = D^{A \ov B} \La_{\cal H}(p).
\]
Setting $p = 0$ shows that
\[
(-1)^{\vert A \vert + \vert B \vert} \big( D^{A \ov B} \La(z_j) \big) \big( \psi(z_j) \big)^{2n - 2 + \vert A \vert + \vert B \vert} \ra D^{A \ov B} \La_{\cal H}(0)
\]
as $z_j \ra z_0$ which finishes the proof.


\section{Comparison with the Kobayashi metric}

\no Let $D$ be a $C^{\infty}$-smoothly bounded domain and $\psi$ a $C^{\infty}$-smooth defining function for $D$. i.e., $D = \{ \psi < 0\}$, $\pa D = \{\psi =
0\}$ and the gradient of $\psi$ does not vanish on $\pa D$. Fix $z_0 \in \pa D$. Define
\begin{equation}
\la(z) = \La(z) \big( \psi(z) \big)^{2n - 2}
\end{equation}
for $z \in D$. In what follows, the standard convention of denoting derivatives by suitable subscripts will be followed. For example
$\psi_{\al} = \pa \psi/ \pa z_{\al}$, $\psi_{\al \ov \be} = \pa^2 \psi/ \pa z_{\al} \pa \ov z_{\be}$ etc. By theorem 1.1 it follows that
\[
\la(z) = \La(z) \big(\psi(z) \big)^{2n - 2} \ra - \big \vert \nabla \psi(z_0) \big \vert^{2n - 2}
\]
as $z \ra z_0$ which implies that $\la$ is continuous on $\ov D$. A stronger result was proved in \cite{LY}, namely that $\la$ is $C^2$-smooth on $\ov D$. They in fact
conjecture that $\la$ must be $C^{\infty}$ smooth on $\ov D$.

\medskip

It must be noted that the boundary behaviour of $\La$ and its second order derivatives was obtained as a consequence of the $C^2$ smoothness of $\la$ on $\ov D$
in \cite{LY}. On the other hand theorem 1.1 does not involve $\la$ and so it is tempting to see if the boundary smoothness of $\la$ can be obtained using the
information there. As seen above, this approach does yield the continuity of $\la$ on $D$. However there is a problem when derivatives of $\la$ are considered. Indeed,
\[
\la_{\al} = \La_{\al} \,\psi^{2n - 2} + (2n - 2) \La \, \psi^{2n - 3} \, \psi_{\al}
\]
for $1 \le \al \le n$. By theorem 1.1 it follows that
\[
\La_{\al}(z) \, \big( \psi(z) \big)^{2n - 1} \ra (2n - 2) \psi_{\al}(z_0)\, \big \vert \nabla \psi(z_0) \big \vert^{2n - 2}
\]
and hence $\la_{\al}(z) \, \psi(z) \ra 0$ as $z \ra z_0$ which does not apriori show that $\la_{\al}$ is continuous at $z_0$. $D$ will now be assumed to be
pseudoconvex so that the $\La$--metric is well defined and $\langle \cdot, \cdot \rangle$ as before will denote the standard hermitian inner product on $\mbf C^n$. To
begin with note that a straightforward calculation using (4.1) implies that

\begin{lem}
Let $\la$ be as above. For $1 \le \al, \be \le n$
\begin{enumerate}
\item[(i)] $\La_{\al} =  \psi^{-2n + 2} \big( \la_{\al} - (2n - 2)\la \,\psi_{\al}\, \psi^{-1} \big)$,
\item[(ii)] $\La_{\al \ov \be} = \psi^{-2n + 2} \big( \la_{\al \ov \be} - (2n - 2)(\la_{\al}\, \psi_{\ov \be} + \la_{\ov \be}\, \psi_{\al}) \,\psi^{-1} \\
\no \hspace*{100pt} + (2n - 2)(2n - 1) \la \, \psi_{\al} \, \psi_{\ov \be} \, \psi^{-2} - (2n - 2) \la \, \psi_{\al \ov \be} \, \psi^{-1} \big)$.
\end{enumerate}
\end{lem}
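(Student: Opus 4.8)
The plan is to treat (4.1) not as a definition of $\la$ but as a relation that can be inverted to express $\La$ in terms of $\la$ and $\psi$. Inverting (4.1) gives $\La = \la \, \psi^{-2n+2}$ on $D$, and both identities then follow by differentiating this single relation with the product and chain rules; no analytic input beyond (4.1) enters, so the entire content is careful bookkeeping of exponents and signs. Throughout I would use that $\psi$ is real-valued, so that $\pa_{\ov \be}\psi = \psi_{\ov \be}$ and $\pa_{\ov \be}\psi_{\al} = \psi_{\al \ov \be}$.

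For (i) I would apply $\pa / \pa z_{\al}$ to $\La = \la\,\psi^{-2n+2}$. The product rule gives two terms: differentiating $\la$ yields $\la_{\al}\,\psi^{-2n+2}$, while differentiating the power $\psi^{-2n+2}$ by the chain rule yields $(-2n+2)\,\la\,\psi^{-2n+1}\,\psi_{\al}$. Factoring out $\psi^{-2n+2}$ produces exactly $\La_{\al} = \psi^{-2n+2}\big(\la_{\al} - (2n-2)\,\la\,\psi_{\al}\,\psi^{-1}\big)$, which is (i).

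For (ii) I would differentiate the expression for $\La_{\al}$ just obtained with respect to $\ov z_{\be}$. Writing $\La_{\al} = \la_{\al}\,\psi^{-2n+2} - (2n-2)\,\la\,\psi_{\al}\,\psi^{-2n+1}$, the first summand contributes $\la_{\al \ov \be}\,\psi^{-2n+2} + (-2n+2)\,\la_{\al}\,\psi_{\ov \be}\,\psi^{-2n+1}$, while the second summand, by a threefold product rule, contributes $-(2n-2)\la_{\ov \be}\psi_{\al}\psi^{-2n+1} - (2n-2)\la\,\psi_{\al \ov \be}\,\psi^{-2n+1} + (2n-2)(2n-1)\la\,\psi_{\al}\psi_{\ov \be}\,\psi^{-2n}$. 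Collecting everything over the common factor $\psi^{-2n+2}$ and combining the two symmetric cross terms into $-(2n-2)(\la_{\al}\psi_{\ov \be} + \la_{\ov \be}\psi_{\al})\psi^{-1}$ yields (ii) verbatim.

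The computation presents no genuine obstacle; the only points to watch are the arithmetic of the exponents as they step down from $-2n+2$ to $-2n+1$ to $-2n$, the sign produced each time a negative power of $\psi$ is differentiated, and the fact that $\psi_{\al \ov \be}$ is a single Hessian entry rather than a product of first derivatives. One could instead differentiate (4.1) directly, but since the target is phrased as a formula for $\La_{\al \ov \be}$, inverting to $\La = \la\,\psi^{-2n+2}$ at the outset keeps the algebra cleanest.
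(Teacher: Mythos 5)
Your computation is correct and is exactly the ``straightforward calculation using (4.1)'' that the paper invokes without writing out: whether one differentiates $\la = \La\,\psi^{2n-2}$ and solves for $\La_{\al}$, $\La_{\al\ov\be}$ (as the paper's preceding displayed formula for $\la_{\al}$ suggests) or inverts first to $\La = \la\,\psi^{-2n+2}$ and differentiates, as you do, the product-rule bookkeeping is identical and both (i) and (ii) come out verbatim. No gap; nothing further is needed.
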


\begin{prop}
Let $D, \psi$ be as above. Then there is a neighbourhood $\cal N$ of $\pa D$ such that
\[
\Big\vert \big\langle v_H(z), \ov \pa \psi(z) \big\rangle \Big\vert = \Big \vert \sum_{\al = 1}^n \psi_{\al}(z) \, v^{\al}_H(z) \Big \vert \lesssim \delta(z)
\]
for all $z \in \cal N \cap D$ and vectors $v$ of unit length.
\end{prop}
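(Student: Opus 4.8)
The plan is to exploit the fact that the tangential component $v_H(z)$ is, by the paper's convention, taken relative to the nearest boundary point $\pi(z)$ rather than relative to $z$ itself; the whole content of the estimate is a measurement of this mismatch of base points. First I would fix the neighbourhood $\cal N$ of $\pa D$ to lie inside the tubular neighbourhood on which the nearest-point projection $\pi$ is single-valued and smooth, so that $\delta(z) = \vert z - \pi(z)\vert$ for $z \in \cal N$; this is exactly the neighbourhood asserted in the statement and its existence uses only the smoothness and compactness of $\pa D$. Since $v_H(z) = v_H(\pi(z))$ lies in the complex tangent space $H_{\pi(z)}(\pa D)$, the very definition of that space gives
\[
\sum_{\al = 1}^n \psi_{\al}\big(\pi(z)\big)\, v^{\al}_H(z) = \big\langle v_H(z), \ov\pa\psi(\pi(z))\big\rangle = 0.
\]

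The second step is to subtract this vanishing quantity from the expression to be estimated, which yields
\[
\sum_{\al = 1}^n \psi_{\al}(z)\, v^{\al}_H(z) = \sum_{\al = 1}^n \Big( \psi_{\al}(z) - \psi_{\al}\big(\pi(z)\big) \Big)\, v^{\al}_H(z).
\]
I would then apply the Cauchy--Schwarz inequality to bound the right-hand side by $\vert v_H(z)\vert$ times the length of the vector with components $\psi_{\al}(z) - \psi_{\al}(\pi(z))$. Because $\psi$ is smooth, the map $z \mapsto \big(\psi_1(z), \ldots, \psi_n(z)\big)$ is $C^1$ and hence Lipschitz with a constant $L$ uniform on the compact set $\ov{\cal N}$, so this length is at most $L\,\vert z - \pi(z)\vert = L\,\delta(z)$. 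Finally, since the splitting $\mbf C^n = H_{\pi(z)}(\pa D) \oplus N_{\pi(z)}(\pa D)$ is orthogonal for the standard Hermitian inner product, one has $\vert v_H(z)\vert \le \vert v\vert = 1$, and the desired bound $\vert \sum_{\al} \psi_{\al}(z)\, v^{\al}_H(z)\vert \lesssim \delta(z)$ follows at once.

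The analytic estimates here are completely routine; the only points that require genuine care are conceptual. The first is to remember that $v_H(z)$ is annihilated by $\pa\psi$ evaluated at $\pi(z)$, not at $z$ — it is precisely this discrepancy, controlled by the Lipschitz continuity of $\pa\psi$ over the distance $\delta(z)$, that makes the quantity small rather than identically zero. The second is to make all constants uniform: $\cal N$ must sit inside the region where $\pi$ is smooth, and the Lipschitz constant for $\pa\psi$ must be chosen uniformly on $\ov{\cal N}$, both guaranteed by the smoothness and compactness of $\pa D$. No use is made of pseudoconvexity or of any property of $\La$, so this estimate is purely a statement about the geometry of the defining function near $\pa D$.
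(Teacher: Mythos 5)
Your proof is correct and follows essentially the same route as the paper's: both hinge on the identity $\langle v_H(z), \ov\pa\psi(\pi(z))\rangle = 0$, rewrite the quantity as $\langle v_H(z), \ov\pa\psi(z) - \ov\pa\psi(\pi(z))\rangle$, and conclude by the Lipschitz bound $\vert \ov\pa\psi(z) - \ov\pa\psi(\pi(z))\vert \lesssim \vert z - \pi(z)\vert = \delta(z)$ on a tubular neighbourhood where $\pi$ is well defined and the second derivatives of $\psi$ are bounded. The only difference is that you spell out the Cauchy--Schwarz step and the bound $\vert v_H(z)\vert \le 1$, which the paper leaves implicit.
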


\begin{proof}
Choose a tubular neighbourhood $\cal N$ of $\pa D$ so small that the distance $\delta(z)$ of $z$ to $\pa D$ is realised by a unique point $\pi(z) \in \pa D$ and such
that all derivatives of $\psi$ up to second order are uniformly bounded on it. For $v \in \mbf C^n$ and $z \in \cal N \cap D$ note that
\[
\big\langle v_H(z), \ov \pa \psi \big(\pi(z) \big) \big\rangle = 0
\]
and thus
\[
\big\langle v_H(z), \ov \pa \psi(z) \big\rangle = \big\langle v_H(z), \ov \pa \psi(z) - \ov \pa \psi \big( \pi(z) \big) \big \rangle.
\]
Since $\vert z - \pi(z) \vert = \delta(z)$ it follows that
\[
\Big \vert \big \langle v_H(z), \ov \pa \psi(z) \big \rangle \Big\vert \lesssim \delta(z)
\]
for $z \in \cal N \cap D$ and $\vert v \vert = 1$.
\end{proof}

\no {\it Proof of Theorem 1.3:} For $v \in \mbf C^n$ regarded as a tangent vector at $z \in D$ we have
\begin{equation}
\big( F^R_D(z, v) \big)^2 = \sum_{\al , \be = 1}^n \frac{\pa^2 \log(- \La)}{\pa z_{\al} \pa \ov z_{\be}} (z) v^{\al} \ov v^{\be} = A - \vert B \vert^2
\end{equation}
where
\[
A = \La^{-1} \sum_{\al, \be = 1}^n \La_{\al \ov \be} v^{\al} \ov v^{\be} = \La^{-1} \cal L_{\La}(z, v) \;\; {\rm and} \;\; B = \La^{-1} \sum_{\al = 1}^n
\La_{\al} v^{\al} = \La^{-1} \langle v, \ov \pa \La \rangle.
\]
>From $\la = \La \psi^{-2n + 2}$ and lemma 4.1 it is seen that
\begin{multline*}
A = \la^{-1} \cal L_{\la}(z, v) - 2(2n - 2)(\la \, \psi)^{-1} \,\Re \big( \langle v, \ov \pa \la \rangle \, \langle \pa \psi, \ov
v \rangle \big)\\
+ (2n - 2)(2n - 1) \psi^{-2} \big \vert \langle v, \ov \pa \psi \rangle \big \vert^2 - (2n - 2) \psi^{-1} \cal L_{\psi}(z, v)
\end{multline*}
and
\[
B = \la^{-1} \langle v, \ov \pa \la \rangle - (2n - 2) \, \psi^{-1} \, \langle v, \ov \pa \psi \rangle.
\]
Now fix a compact $S \subset \mbf C^n$. If $v$ varies in $S$ and $z$ is close to $\pa D$ then
\begin{equation}
v_N = v_N(z) = \frac{\big \langle v, \ov \pa \psi \big( \pi(z) \big) \big \rangle}{\big \vert \ov \pa \psi \big( \pi(z) \big) \big \vert^2} \, \ov \pa \psi \big( \pi(z) \big)
\end{equation}
also varies compactly. Since $\la$ is $C^2$ on $\ov D$ and non-zero there, both $\la^{-1} \langle v_N, \ov \pa \la \rangle$ and $\la^{-1} \, i \pa \ov \pa \la(v_N, v_N)$
are uniformly bounded for all $v \in S$ and $z$ near $\pa D$. Also (4.3) shows that
\[
\big \vert \langle v_N, \ov \pa \psi \rangle \big \vert \ra \big \vert v_N(z_0) \big \vert \, \big \vert \ov \pa \psi(z_0) \big \vert = \big \vert v_N(z_0) \big \vert \, \big \vert \pa \psi(z_0) \big \vert
\]
and
\[
\cal L_{\psi}(z, v_N) \ra \cal L_{\psi} \big( z_0, v_N(z_0) \big)
\]
as $z \ra z_0$, the convergence being uniform for $v \in S$. It follows that
\[
\big( -\psi(z) \big)^2 \, A \ra (2n - 2)(2n - 1) \vert v_N(z_0) \vert^2 \, \vert \pa \psi(z_0) \vert^2
\]
and
\[
\big( -\psi(z) \big) B \ra -(2n - 2) \big \vert v_N(z_0) \big \vert \, \big \vert \pa \psi(z_0) \big \vert
\]
as $z \ra z_0$. Consequently (4.2) shows that
\begin{align*}
\big( -\psi(z) \big)^2 \Big( F^R_D(z, v_N(z)\big) \Big)^2 & = \big( -\psi(z) \big)^2 \, A - \big( -\psi(z) \big)^2 \, \vert B \vert^2\\
& \ra (2n - 2) \big \vert v_N(z_0) \big \vert^2 \, \big \vert \pa \psi(z_0) \big \vert^2
\end{align*}
as $z \ra z_0$ which proves the first assertion of this theorem.

\medskip

For $v_H = v_H(z) = v - v_N(z)$ the terms $\la^{-1} \langle v_H, \ov \pa \la \rangle$ and $\la^{-1} \cal L_{\la}(z, v_H)$ are uniformly bounded for $v
\in S$ and $z$ near $\pa D$ as before. By the previous lemma
\[
\big \vert \psi^{-1} \langle v_H, \ov \pa \psi \rangle \big \vert \lesssim 1
\]
if $z$ is close enough to $\pa D$ since $\vert \psi(z) \vert \approx \delta(z)$. This shows that $B$ is bounded near $z_0$. Also
\[
\cal L_{\psi}(z, v_H) \ra \cal L_{\psi} \big( z_0, v_H(z_0) \big)
\]
as $z \ra z_0$, the convergence being uniform in $v \in S$. It follows that
\[
\big( -\psi(z) \big) \, A \ra (2n - 2)\, \cal L_{\psi}\big( z_0, v_H(z_0) \big)
\]
and
\[
\big( -\psi(z) \big) \, \vert B \vert^2 \ra 0
\]
as $z \ra z_0$. Consequently (4.2) shows that
\begin{align*}
\big( -\psi(z) \big) \Big( F^R_D \big( z, v_H(z) \big) \Big)^2 & = \big( -\psi(z) \big) \, A - \big( -\psi(z) \big) \, \vert B \vert^2\\
& \ra (2n - 2) \, \cal L_{\psi} \big( z_0, v_H(z_0) \big)
\end{align*}
as $z \ra z_0$ which proves the second assertion and completes the proof of this theorem.

\medskip

\no For the proof of theorem 1.4, which is a statement about the global comparibility of the $\La$--metric and the Kobayashi metric it is essential to know the behaviour of
$F^R_D(z, v)$ near $\pa D$ for an arbitrary vector $v$.

\begin{prop}
Let $D \subset \mbf C^n$ be a bounded strongly pseudoconvex domain with $C^{\infty}$-smooth boundary and fix $z_0 \in \pa D$. Suppose that $\psi$ is a $C^{\infty}$-smooth
defining function for $\pa D$. Then for $z \in D$ and $v \in \mbf C^n$
\begin{equation}
\lim_{z \ra z_0} \frac{ \big( F^R_D(z, v) \big)^2 }{ \big( F^R_D(z, v_N) \big)^2 + \big( F^R_D(z, v_H) \big)^2 } = 1
\end{equation}
where the limit is uniform for $v$ in a compact subset of $\mbf C^n$ and as usual the decomposition $v = v_H + v_N$ is taken at $\pi(z) \in \pa D$. In particular there
exists a constant $C > 1$ such that
\begin{equation}
\frac{1}{C} \left(\frac{\vert v_N \vert^2}{\de^2(z)} + \frac{ \cal L_{\psi} \big(\pi(z), v_H \big)}{\de(z)} \right)^{1/2} \le F^R_D(z, v) \le C \,\left( \frac{\vert v_N
\vert^2}{\de^2(z)} + \frac{ \cal L_{\psi} \big( \pi(z), v_H \big)}{\de(z)} \right)^{1/2}
\end{equation}
for all $z$ sufficiently close to $\pa D$ and all $v \in \mbf C^n$.
\end{prop}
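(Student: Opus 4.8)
The plan is to expand the squared length of $v$ through the polarization of the $\La$--metric. Writing $T_z(u, w) = \sum_{\al, \be = 1}^n g_{\al \ov \be}(z) \, u^{\al} \ov w^{\be}$ for the hermitian form of the metric, so that $\big( F^R_D(z, v) \big)^2 = T_z(v, v)$, the orthogonal decomposition $v = v_N + v_H$ at $\pi(z)$ gives
\[
\big( F^R_D(z, v) \big)^2 = \big( F^R_D(z, v_N) \big)^2 + \big( F^R_D(z, v_H) \big)^2 + 2 \Re T_z(v_N, v_H).
\]
Thus (4.4) amounts to showing that the cross term $2 \Re T_z(v_N, v_H)$ is negligible relative to the sum of the two diagonal squared lengths, and since the ratio in (4.4) is homogeneous of degree zero in $v$ I may restrict throughout to $\vert v \vert = 1$. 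The proof then reduces to three uniform estimates, valid for $z$ close to $\pa D$ and $\vert v \vert = 1$.

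First I would record the sizes of the diagonal terms from the identity $\big( F^R_D(z, v) \big)^2 = A - \vert B \vert^2$ in (4.2), where $A$ and $B$ are the quantities there (expressed via lemma 4.1) and $\vert \psi(z) \vert \approx \de(z)$. Restricted to the normal part, the surviving leading term after the cancellation between the $\psi^{-2}$ contributions of $A$ and $\vert B \vert^2$ is $(2n - 2) \psi^{-2} \vert \langle v_N, \ov \pa \psi \rangle \vert^2$; since $\vert \pa \psi \vert$ is bounded away from zero near $\pa D$ this yields the two-sided bound $\big( F^R_D(z, v_N) \big)^2 \approx \vert v_N \vert^2 / \de^2(z)$. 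For the tangential part proposition 4.2 forces $\langle v_H, \ov \pa \psi \rangle = O(\de)$, so the $\psi^{-2}$ term contributes only $O(1)$ and the dominant term is $(2n - 2)(- \psi)^{-1} \cal L_{\psi}(z, v_H)$; strong pseudoconvexity gives $\cal L_{\psi}(\pi(z), v_H) \gtrsim \vert v_H \vert^2$, so $\big( F^R_D(z, v_H) \big)^2 \approx \cal L_{\psi}(\pi(z), v_H) / \de(z) \approx \vert v_H \vert^2 / \de(z)$.

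The crucial and most delicate step is the cross term, for which the plain Cauchy--Schwarz bound $\vert T_z(v_N, v_H) \vert \le F^R_D(z, v_N) \, F^R_D(z, v_H)$ is not enough: it only produces a ratio bounded by $1$, and this is in fact attained when $\vert v_N \vert^2 \approx \de(z) \, \vert v_H \vert^2$. Instead I would polarize $A$ and $B$ and observe that every potentially singular (order $\psi^{-2}$) contribution to $T_z(v_N, v_H)$ carries a factor $\langle v_H, \ov \pa \psi \rangle$, which proposition 4.2 bounds by $O(\de)$; after the same cancellation as above this leaves $\vert T_z(v_N, v_H) \vert \lesssim \vert v_N \vert \, \vert v_H \vert / \de(z)$. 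This is exactly where the near orthogonality of $v_H$ to the normal in the metric does the real work. Combining the three estimates,
\[
\frac{2 \, \vert \Re T_z(v_N, v_H) \vert}{\big( F^R_D(z, v_N) \big)^2 + \big( F^R_D(z, v_H) \big)^2} \lesssim \frac{\vert v_N \vert \, \vert v_H \vert \, \de(z)}{\vert v_N \vert^2 + \vert v_H \vert^2 \, \de(z)} \le \frac{\sqrt{\de(z)}}{2}
\]
by the arithmetic--geometric mean inequality in the denominator, and the right side tends to $0$ uniformly. This proves (4.4) with a uniform rate $O \big( \sqrt{\de(z)} \big)$.

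Finally (4.5) is immediate: by (4.4) the quantity $\big( F^R_D(z, v) \big)^2$ is comparable, uniformly, to $\big( F^R_D(z, v_N) \big)^2 + \big( F^R_D(z, v_H) \big)^2$, and the two-sided diagonal estimates identify this sum with $\vert v_N \vert^2 / \de^2(z) + \cal L_{\psi}(\pi(z), v_H) / \de(z)$ up to uniform constants; taking square roots gives (4.5). All the implied constants are uniform over $\pa D$ by its compactness and the smoothness of $\psi$. The main obstacle is thus the sharp cross-term estimate, since the boundary geometry of the $\La$--metric makes the naive Cauchy--Schwarz bound exactly borderline, and only the gain $\langle v_H, \ov \pa \psi \rangle = O(\de)$ of proposition 4.2 tips it below $1$.
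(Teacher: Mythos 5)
Your proposal is correct, but it follows a genuinely different route from the paper's. Both proofs start identically, by polarizing the metric and reducing (4.4) to showing the cross term $2\Re\langle v_H, v_N\rangle_R$ is negligible against the diagonal sum; both also rest on the same two pillars, namely the $C^2$-boundary regularity of $\la = \La\psi^{2n-2}$ (via lemma 4.1) and the tangential gain $\vert\langle v_H,\ov\pa\psi\rangle\vert \lesssim \de(z)$ of proposition 4.2. But from there the paper fixes $v$ and splits into two cases according to whether $v_N(z_0) \neq 0$ or $v_N(z_0) = 0$, computing exact limits of $\psi^2\langle v_H,v_N\rangle_R$ (resp. $(-\psi)\langle v_H,v_N\rangle_R$) from the asymptotics of theorems 1.1 and 1.3; the degenerate case requires the additional estimates $\vert v_N(z) - v_N(z_0)\vert \lesssim \de(z)$ and $\vert\langle v_N,\ov\pa\psi\rangle\vert \lesssim \de(z)$. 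You avoid the case distinction entirely: you only need order-of-magnitude bounds — the two-sided diagonal estimates $\big(F^R_D(z,v_N)\big)^2 \approx \vert v_N\vert^2/\de^2$ and $\big(F^R_D(z,v_H)\big)^2 \approx \cal L_{\psi}(\pi(z),v_H)/\de \approx \vert v_H\vert^2/\de$ (the lower bounds coming from $\vert\pa\psi\vert$ bounded below and strong pseudoconvexity), together with the cross-term bound $\vert\langle v_H,v_N\rangle_R\vert \lesssim \vert v_N\vert\,\vert v_H\vert/\de$, which holds because every $\psi^{-2}$ term in the polarized expression carries a factor $\langle v_H,\ov\pa\psi\rangle = O(\de\vert v_H\vert)$ (in fact no cancellation between the two $\psi^{-2}$ terms is even needed; each is separately controlled). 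The AM--GM step then yields the ratio bound $O\big(\sqrt{\de(z)}\big)$. What each approach buys: the paper's computation produces the exact limiting constants, consistent with theorem 1.3; yours produces an explicit uniform convergence rate and, notably, makes the uniformity in $v$ transparent — in the paper's argument the case division itself depends on $v$, so uniformity over compact sets of vectors (where $v_N(z_0)$ can be small but nonzero) is less visible, whereas your scale-invariant estimates handle all $v \neq 0$ at once. Your observation that plain Cauchy--Schwarz is exactly borderline (attained when $\vert v_N\vert^2 \approx \de\,\vert v_H\vert^2$) correctly isolates why the sharpened cross-term bound is the real content of the proposition.
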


\begin{proof}
>From $v = v_H + v_N$ for any $z \in D$ sufficiently close to $\pa D$ and the fact that the $\La$--metric is Hermitian it follows that
\[
\big( F^R_D(z, v) \big)^2 = \big( F^R_D(z, v_H) \big)^2 + \big( F^R_D(z, v_N) \big)^2 + 2 \, \Re \langle v_H, v_N \rangle_R
\]
where $\langle X, Y \rangle_R = \sum_{\al, \be = 1}^n g_{\al \ov \be}(z) X^{\al} \ov Y^{\be}$ for vectors $X = (X^1, X^2, \ldots, X^n)$ and $Y = (Y^1, Y^2, \ldots, Y^n)$.
Since $g_{\al \ov \be}(z) = \pa^2 \log(-\La) / \pa z_{\al} \pa \ov z_{\be}(z)$ we have that
\[
\langle v_H, v_N \rangle_R = \La^{-1} \big \langle \cal L_{\La}(z) v_H, v_N \big \rangle - \La^{-1} \langle v_H, \ov \pa \La \rangle \cdot \La^{-1}
\langle \ov v_N, \pa \La \rangle.
\]
There are two cases that need to be considered. First suppose that $v_N(z_0) \not= 0$. From theorem 1.1 it is known that

\begin{itemize}
\item $\psi^{2n - 2} \La \ra - \big \vert \nabla \psi(z_0) \big \vert^{2n - 2}$,

\item $\psi^{2n - 1} \La_{\al} \ra (2n - 2) \psi_{\al}(z_0) \big \vert \nabla \psi(z_0) \big \vert^{2n - 2}$ and

\item $\psi^{2n} \La_{\al \ov \be} \ra -(2n - 1)(2n - 2) \psi_{\al}(z_0) \psi_{\ov \be}(z_0) \big \vert \nabla \psi(z_0) \big \vert^{2n - 2}$
\end{itemize}
as $z \ra z_0$. As a result
\begin{align*}
\psi^2 \langle v_H, v_N \rangle_R & \ra (2n - 1)(2n - 2)  \sum_{\al, \be = 1}^n \psi_{\al}(z_0) \psi_{\ov \be}(z_0) v_H^{\al}(z_0) \ov v_N^{\be}(z_0) - (2n - 2)^2 \big \langle v_H(z_0), \ov \pa \psi(z_0) \big \rangle \, \big \langle \ov v_N(z_0), \pa \psi(z_0) \big \rangle\\
& = (2n - 1)(2n - 2) \big \langle v_H(z_0), \ov \pa \psi(z_0) \big \rangle \, \big \langle \ov v_N(z_0), \pa 
\psi(z_0) \big \rangle - (2n - 2)^2  \big \langle v_H(z_0), \ov \pa \psi(z_0) \big \rangle \, \big \langle \ov v_N(z_0), \pa \psi(z_0) \big \rangle\\
& = (2n - 2) \big \langle v_H(z_0), \ov \pa \psi(z_0) \big \rangle \, \big \langle \ov v_N(z_0), \pa 
\psi(z_0) \big \rangle = 0
\end{align*}
as $z \ra z_0$ where the last equality holds since $\big \langle v_H(z_0), \ov \pa \psi(z_0) \big \rangle = 0$. On the other hand it is seen from theorem 1.3 that
\[
\Big( \big(-\psi(z) \big) \, F^R_D(z, v_N) \Big)^2 + \Big( \big(-\psi(z) \big) \, F^R_D(z, v_H) \Big)^2 \ra (2n - 2) \big \vert v_N(z_0) \big \vert^2 \, \big \vert \nabla \psi(z_0) \big \vert^2 > 0
\]
as $z \ra z_0$. There is no contribution from the horizontal component $v_H$ since $F^R_D(z, v_H) \backsim \big(-\psi(z) \big)^{-1/2}$ near $\pa D$ by theorem 1.3 and hence
\[
\big( (-\psi(z)) \, F^R_D(z, v_H) \big)^2 \ra 0
\]
as $z \ra z_0$. Putting all this together we see that
\[
\frac{ \langle v_H, v_N \rangle_R }{ \big( F^R_D(z, v_N) \big)^2 + \big( F^R_D(z, v_H) \big)^2 } \ra 0
\]
as $z \ra z_0$ which implies (4.4) in case $v_N(z_0) \not= 0$.

\medskip

Now suppose that $v_N(z_0) = 0$ which exactly means that $v$ lies in the complex tangent space to $\pa D$ at $z_0$ and that $v_N(z) \ra 0$ as $z \ra z_0$. As above, each of the terms $\langle v_H, v_N \rangle_R, F^R_D(z, v_N)$ and $F^R_D(z, v_H)$ will be separately discussed, starting with $F^R_D(z, v_N)$. From (4.2) we have
\[
\big( -\psi(z) \big) \big( F^R_D(z, v_N) \big)^2 = \big( -\psi(z) \big) \, A - \big( -\psi(z) \big) \, \vert B \vert^2
\]
where
\begin{multline*}
A = \la^{-1} \cal L_{\la}(z, v_N) - 2(2n - 2)(\la \, \psi)^{-1} \,\Re \big( \langle v_N, \ov \pa \la \rangle \, \langle \pa \psi, \ov
v_N \rangle \big)\\
+ (2n - 2)(2n - 1) \psi^{-2} \big \vert \langle v_N, \ov \pa \psi \rangle \big \vert^2 - (2n - 2) \psi^{-1} \cal L_{\psi}(z, v_N)
\end{multline*}
and
\[
B = \la^{-1} \langle v_N, \ov \pa \la \rangle - (2n - 2) \, \psi^{-1} \, \langle v_N, \ov \pa \psi \rangle.
\]
Let $I, II, III, IV$ denote the four terms in the expression for $A$. Since $\la$ is $C^2$ on $\ov D$ and non-zero there, it follows that $\big( -\psi(z) \big) \, I \ra 0$ while
$\big( -\psi(z) \big) \, II \ra 0$ and $\big( -\psi(z) \big) \, IV \ra 0$ as $v_N(z) \ra 0$. Note that $\big( -\psi(z) \big) \, III = (2n - 2)(2n - 1){\psi}^{-1} \big \vert \langle v_N, \ov \pa \psi
\rangle \big \vert^2$. Since $v_N(z_0) = 0$
\begin{align}
\big \langle v_N(z), \ov \pa \psi(z) \big \rangle & = \big \langle v_N(z), \ov \pa \psi(z) \big \rangle - \big \langle v_N(z_0), \ov \pa \psi(z_0) \big \rangle \\
                              & = \big \langle v_N(z), \ov \pa \psi(z) - \ov \pa \psi(z_0) \big \rangle + \big \langle v_N(z) - v_N(z_0), \ov \pa \psi(z_0) \big \rangle.
\end{align}
It is evident that
\[
\big \vert \ov \pa \psi(z) - \ov \pa \psi(z_0) \big \vert \lesssim \de(z)
\]
while (4.2) combined with the fact that $\psi$ has non-vanishing gradient near $\pa D$ shows that
\begin{align*}
v_N(z) - v_N(z_0) & \approx  \Big \langle v, \ov \pa \psi \big( \pi(z) \big) \Big \rangle \, \ov \pa \psi \big( \pi(z) \big) - \big \langle v, \ov \pa \psi(z_0) \big \rangle \, \ov \pa \psi(z_0)\\
& \approx \Big \langle v, \ov \pa \psi \big( \pi(z) \big) \Big \rangle \, \Big( \ov \pa \psi \big( \pi(z) \big) - \ov \pa \psi(z_0) \Big) + \Big \langle v, \ov \pa \psi \big( \pi(z) \big) - \ov \pa \psi(z_0) \Big \rangle \, \ov \pa \psi(z_0).
\end{align*}
Hence it follows that
\[
\big \vert v_N(z) - v_N(z_0) \big \vert \lesssim \de(z).
\]
Since $\vert \psi(z) \vert \approx \de(z)$, the observations made above collectively show that
\begin{equation}
\psi^{-1} \Big \vert \big \langle v_N(z), \ov \pa \psi(z) \big \rangle \Big \vert^2 \ra 0
\end{equation}
as $z \ra z_0$. As a result $\big( -\psi(z) \big) \, A \ra 0$. On the other hand note that
\[
\big( -\psi(z) \big) \vert B \vert^2 = \big \vert (-\psi)^{1/2} \, \la^{-1} \langle v_N, \ov \pa \la \rangle + (2n - 2)(-\psi)^{-1/2} \langle v_N, \ov \pa \psi \rangle \big
\vert^2.
\]
The first term evidently goes to zero while the second term also goes to zero thanks to (4.6). As a result $\big( -\psi(z) \big) \big( F^R_D(z, v_N) \big)^2 \ra 0$ as $z \ra
z_0$. By theorem 1.1, it is known that
\[
\big( -\psi(z) \big) \big( F^R_D(z, v_H) \big)^2 \ra (2n - 2) \, \cal L_{\psi}(z_0, v) > 0
\]
as $z \ra z_0$ where the positivity of the Levi form follows from the strong pseudoconvexity of $D$ and the assumption that $v$ is complex tangential to $\pa D$ at
$z_0$. The other term is $\langle v_H, v_N \rangle_R$; for this we write
\[
\big( -\psi(z) \big) \, \langle v_H, v_N \rangle_R = \big( -\psi(z) \big) S - \big( -\psi(z) \big) T
\]
where
\begin{multline*}
S = \la^{-1} \big \langle \cal L_{\la}(z) v_H, v_N \big \rangle - (2n - 2) \psi^{-1} \la^{-1} \big( \langle v_H, \ov \pa \la \rangle \, \langle \ov \pa \psi, v_N \rangle + \langle v_H, \ov \pa \psi \rangle \, \langle \ov \pa \la, v_N \rangle \big)\\
+ (2n - 1)(2n - 2) \psi^{-2} \la \, \langle v_H, \ov \pa \psi \rangle \, \langle \ov \pa \psi, v_N \rangle - (2n - 2) \psi^{-1} \la \big \langle \cal L_{\psi}(z) v_H, v_N \big \rangle
\end{multline*}
and
\[
T = \la^{-1} \big(  \langle v_H, \ov \pa \la \rangle - (2n - 2) \psi^{-1} \la \langle v_H, \ov \pa \psi \rangle \big) \, \la^{-1} \big( \langle \ov \pa \la, v_N
\rangle - (2n - 2) \psi^{-1} \la \langle \ov \pa \psi, v_N \rangle \big).
\]
Let $I, II, III, IV$ be the four terms in the expression for $S$. Since $\la$ is $C^2$ on $\ov D$ and non-zero there, it follows that $\big( -\psi(z) \big) I \ra 0$ while
$\big( -\psi(z) \big) II \ra 0$ and $\big( -\psi(z) \big) IV \ra 0$ as $v_N(z) \ra 0$. Note that $\big( -\psi(z) \big) III = (2n - 1)(2n - 2) \psi^{-1} \la  \langle v_H, \ov \pa \psi \rangle \,
\langle \ov \pa \psi, v_N \rangle$. By proposition 4.2 it follows that $\big \vert \langle v_H, \ov \pa \psi \rangle \big \vert \lesssim \delta(z)$ near $z_0$ which implies that
\[
\big \vert \psi^{-1} \langle v_H, \ov \pa \psi \rangle \big \vert \lesssim 1
\]
near $z_0$. The other factor $\langle \ov \pa \psi, v_N \rangle \ra 0$ as $z \ra z_0$ since $v_N(z) \ra 0$. Hence $\big( -\psi(z) \big) S \ra 0$.

\medskip

On the other hand note that
\[
\big( -\psi(z) \big) T = \la^{-1} \big(  \psi \langle v_H, \ov \pa \la \rangle - (2n - 2) \la \langle v_H, \ov \pa \psi \rangle \big) \, \la^{-1} \big( \langle \ov \pa
\la, v_N \rangle - (2n - 2) \psi^{-1} \la \langle \ov \pa \psi, v_N \rangle \big).
\]
Evidently $\psi \langle v_H, \ov \pa \la \rangle \ra 0$ since $\psi(z_0) = 0$ and $\la$ is smooth near $z_0$. Proposition 4.2 shows that $\big \vert \langle v_H, \ov \pa \psi \rangle \big \vert \lesssim \delta(z)$ and hence the first factor above vanishes at $z_0$. The smoothness of $\la$ again implies that $\langle \ov \pa \la, v_N \rangle \ra 0$ as $z \ra z_0$. Finally, the calculations leading up to (4.6) show that $\big \vert \langle \ov \pa \psi, v_N \rangle \big \vert \lesssim \delta(z)$ and this in turn implies that the remaining term, i.e., $(2n - 2) \psi^{-1} \la \langle \ov \pa \psi, v_N \rangle$ is bounded near $z_0$. All this together shows that $\big( -\psi(z) \big) T \ra 0$ as $z \ra z_0$ and this is sufficient to conclude that (4.4) holds even when $v_N(z_0) = 0$. The estimates in (4.5) are now a consequence of theorem 1.3.

\end{proof}

\no {\it Proof of theorem 1.4:} It is known (see \cite{Gr2} for example) that the Kobayashi metric on a smoothly bounded strongly pseudoconvex domain $D \subset \mbf C^n$
behaves as
\begin{equation}
\big( F^K_D(z, v) \big)^2 \approx \big( F^K_D(z, v_N) \big)^2 + \big( F^K_D(z, v_H) \big)^2 \approx \frac{\vert v_N(z) \vert^2}{\de^2(z)} + \frac{\vert v_H(z)
\vert^2}{\de(z)}
\end{equation}
where $z \in D$ is close to $\pa D$, $v$ is a tangent vector at $z$ and the decomposition $v = v_N(z) + v_H(z)$ is taken at $\pi(z)$ as usual. Using the homogeneity of
these metrics in the vector variable, proposition 4.3 shows that
\begin{equation}
F^R_D(z, v) \approx F^K_D(z, v)
\end{equation}
uniformly for all $z$ close to $\pa D$ and $v \in \mbf C^n$. On compact subsets of $D$, these metrics are comparable to the euclidean metric. Therefore the length of any
piecewise differentiable path joining $p, q \in D$ is uniformly comparable in these metrics amd hence $d_K(p, q) \approx d_R(p, q)$ for all $p, q \in D$. Since the
Carath\'{e}odory and the Bergman metric have the same behaviour as (4.7) on strongly pseudoconvex domains, theorem 1.4 follows.

\medskip

Consequently, the $\La$--metric on a smoothly bounded strongly pseudoconvex domain is complete. For the proof of corollary 1.5, observe that the following finer estimate
holds for the Kobayashi metric on a smooth strongly pseudoconvex domain with defining function $\psi$, i.e., for $z \in D$ close to $\pa D$ and $v \in \mbf C^n$,
\begin{multline*}
\big( 1 - C \de^{1/2}(z) \big)^2 \left( \frac{\vert v_N \vert^2}{4 \de^2(z)} + (1 - \eta) \frac{\cal L_{\psi} \big( \pi(z), v_H \big)}{\de(z)} \right) \le \big( F^K_D(z, v) \big)^2 \\
\hfill  \le \big( 1 + C \de^{1/2}(z) \big)^2 \left( \frac{\vert v_N \vert^2}{4 \de^2(z)} + (1 + \eta) \frac{\cal L_{\psi}\big( \pi(z), v_H \big)}{\de(z)} \right)
\end{multline*}
where $\eta > 0$ and $C > 0$ are uniform constants and the decomposition $v = v_N + v_H$ is taken at $\pi(z)$. Since $F^R_D(z, v) \approx F^K_D(z, v)$ on such a $D$, it
follows that there exists $\al > 1$ such that
\begin{multline*}
\al^{-1} \, \big( 1 - C \de^{1/2}(z) \big)^2 \left( \frac{\vert v_N \vert^2}{4 \de^2(z)} + (1 - \eta) \frac{\cal L_{\psi}\big( \pi(z), v_H \big)}{\de(z)} \right) \le \big( F^R_D(z, v)
\big)^2 \\
\hfill  \le \al \, \big( 1 + C \de^{1/2}(z) \big)^2 \left( \frac{\vert v_N \vert^2}{4 \de^2(z)} + (1 + \eta) \frac{\cal L_{\psi}\big( \pi(z), v_H \big)}{\de(z)} \right)
\end{multline*}
for all $z \in D$ close to $\pa D$ and vectors $v \in \mbf C^n$. Theorem 1.1 of \cite{BB} shows that these estimates can be integrated to yield
\[
\al^{-1} \; g(p, q) - C \le d_R(p, q) \le \al \; g(p, q) + C
\]
for all $p, q \in D$ and some uniform $C > 0$.

\medskip

To show that $(D, d_R)$ is $\de$-hyperbolic, observe that the identity map
\[
i : (D, d_R) \ra (D, d_K)
\]
is Lipschitz thanks to theorem 1.4. Now given $p, q \in D$ and a continuous path $\gamma : [0, 1] \ra D$ joining them, the length of $\gamma$ in the Kobayashi metric is
\[
l(\gamma) = \sup_{\cal P} \big\{ d_K(\gamma(t_{i - 1}), \gamma(t_i) \big\}
\]
where the supremum is taken over all possible partitions $\cal P : 0 = t_0 < t_1 < \ldots < t_n = 1$. Then
\[
\tilde d(p, q) = \inf_{\gamma} l(\gamma)
\]
where the infimum is taken over all continuous paths $\gamma$ joining $p, q$ is the induced metric which evidently satisfies $d_K(p, q) \le \ti d(p, q)$ for all $p, q \in
D$. If $f : \Delta \ra D$ is holomorphic then it can be checked that
\[
\tilde d \big( f(a), f(b) \big) \le d_{hyp}(a, b)
\]
where $a, b \in \Delta$ and $d_{hyp}$ is the usual hyperbolic metric on $\Delta$, i.e., $\tilde d$ is distance decreasing for holomorphic maps from the disc. However it
is known that $d_K$ is the largest distance with this property and hence $\ti d \le d_K$ on $D$. It follows that $\ti d = d_K$ which means that $d_K$ is intrinsic. The
same property holds (by definition) for $d_R$ since it is hermitian. By \cite{BB} it is known that $(D, d_K)$ is $\de$-hyperbolic. Theorem 3.18 of \cite{Va} now shows
that the same is true of $(D, d_R)$.


\section{Calculation of the holomorphic sectional curvature along normal directions}
\no In this section we prove theorem 1.6. First we show that the $\Lambda$-metric is invariant under certain transformations of $\mathbb{C}^{n}$, namely translations, unitary rotations and dialations.

\begin{lem}
The $\Lambda$-metric is invariant under an affine transformation of $\mathbb{C}^{n}$ of the form
\[
 f(z)=a(Az)+b
\]
where $a \in \mathbb{C} \setminus \{0\}$, $b \in \mathbb{C}^{n}$ and $A$ is an $n \times n$ unitary matrix.
\end{lem}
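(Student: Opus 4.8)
The plan is to first determine how the Robin function itself transforms under $f$, and then observe that the resulting change in $\log(-\La)$ is by an additive constant, which is annihilated by $i\pa\ov\pa$. Write $D' = f(D)$; since $f$ is an invertible affine map, $D'$ is again a smoothly bounded pseudoconvex (in particular bounded regular) domain, so $\La_{D'}$ and the $\La$--metric on $D'$ are defined. The assertion is equivalent to showing that $f : (D, ds^2) \ra (D', ds^2)$ is an isometry, i.e. that $f^* \om_R^{D'} = \om_R^D$.

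The key computation is the transformation law
\[
G_D(z, p) = \vert a \vert^{2n - 2} \, G_{D'}\big( f(z), f(p) \big).
\]
To prove this I would verify that the right-hand side, call it $\hat G(z, p)$, satisfies the three defining properties of the Green function for $D$ with pole at $p$, and then invoke uniqueness. Two elementary observations drive everything. First, as a real map $f$ is the composition of the real-orthogonal transformation induced by $A$ (unitary matrices are isometries of $\mbf C^n \approx \mbf R^{2n}$), a homothety by $\vert a \vert$ together with the rotation by $\arg a$, and a translation; each of these transforms the Laplacian $\Delta$ by a positive scalar, so $f$ preserves harmonicity. Hence $\hat G(\cdot, p)$ is harmonic on $D \sm \{p\}$ and tends to $0$ as $z \ra \pa D$, because then $f(z) \ra \pa D'$. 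Second, since $A$ is a $\mbf C$-linear isometry and multiplication by $a$ scales norms by $\vert a \vert$,
\[
\big\vert f(z) - f(p) \big\vert = \vert a \vert \, \vert z - p \vert,
\]
so $\vert f(z) - f(p) \vert^{-2n + 2} = \vert a \vert^{-2n + 2} \vert z - p \vert^{-2n + 2}$. Feeding this into the singular expansion $G_{D'}(w, f(p)) = \vert w - f(p) \vert^{-2n + 2} + (\text{harmonic near } f(p))$ shows that $\hat G(z, p) - \vert z - p \vert^{-2n + 2}$ is harmonic near $p$. Uniqueness of the Green function (Section 2) then yields the displayed identity.

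Passing to the Robin constant, I would subtract the fundamental solution and use $\vert z - p \vert^{-2n + 2} = \vert a \vert^{2n - 2} \vert f(z) - f(p) \vert^{-2n + 2}$ to write
\[
G_D(z, p) - \vert z - p \vert^{-2n + 2} = \vert a \vert^{2n - 2} \Big( G_{D'}\big( f(z), f(p) \big) - \big\vert f(z) - f(p) \big\vert^{-2n + 2} \Big),
\]
and then let $z \ra p$ (equivalently $f(z) \ra f(p)$) to obtain
\[
\La_D(p) = \vert a \vert^{2n - 2} \, \La_{D'}\big( f(p) \big).
\]
Consequently $\log\big( -\La_D(p) \big) = (2n - 2)\log\vert a \vert + \log\big( -\La_{D'}(f(p)) \big)$, and since the first term is constant it is killed by $i\pa\ov\pa$. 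Because $f$ is holomorphic, pullback commutes with $\pa\ov\pa$, so
\[
\om_R^D = i\pa\ov\pa \log(-\La_D) = f^*\big( i\pa\ov\pa \log(-\La_{D'}) \big) = f^* \om_R^{D'},
\]
which is exactly the claimed invariance; in coordinates this is the statement that the chain rule applied to $g_{\al\ov\be}$ reproduces the metric on $D'$.

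The only step requiring genuine care is the transformation law for the Green function, and within it the verification that $f$ preserves harmonicity; I expect this to be the main (though still routine) obstacle. One must observe that the unitarity of $A$ together with the conformality of scalar multiplication by $a$ make $f$ a real conformal affine map, under which $\Delta$ picks up only a positive multiplicative factor, so that the class of harmonic functions is left unchanged. Once this is in hand the remaining steps are purely formal.
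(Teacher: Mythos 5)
Your proof is correct and follows essentially the same route as the paper: both rest on the transformation law $\La_{D'}(f(z)) = |a|^{-2n+2}\,\La_{D}(z)$, after which $\log(-\La)$ changes only by an additive constant that disappears upon applying $i\pa\ov\pa$. The only difference is presentational --- the paper simply asserts this transformation law and finishes with an explicit chain-rule computation in coordinates, whereas you derive the law from uniqueness of the Green function (via conformality of $z \mapsto aAz+b$ and $|f(z)-f(p)| = |a|\,|z-p|$) and phrase the conclusion invariantly through $f^{*}$ commuting with $\pa\ov\pa$; this supplies a detail the paper leaves unproved but is not a different method.
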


\begin{proof}
Let $D \subset \mathbb{C}^{n}$ be a domain for which the $\Lambda$-metric is well defined and let $D^{\prime} = f(D)$. We have to prove that
\begin{align}\label{Lambda-metric-is-invariant}
F^{R}_{D}(z, v) = F^{R}_{D^{\prime}}(f(z), df(z)v)
\end{align}
for $z \in D$ and $v \in \mathbb{C}^{n}$. Note that
\[ 
\Lambda_{D^{\prime}}(w) = |a|^{-2n+2} \Lambda_{D}(z)
\]
where $w = f(z)$. This gives
\[
\log(-\Lambda_{D^{\prime}}(w)) = \log(-\Lambda_{D}(z)) + \log  |a|^{-2n+2}.
\]
Differentiating this with respect to $\overline{z}_{\beta}$, and then by $z_{\alpha}$ we obtain
\[
\sum_{l,m} \bigg \{ \frac {\partial^2 \log(-\Lambda_{D^{\prime}} ) } {\partial w_{l} \partial \overline{w}_{m}} (w) \bigg\} \frac{\partial f_{l}}{\partial z_{\alpha}}(z) \frac{\partial\overline{f}_{m}}{\partial \overline{z}_{\beta}}(z)
= \frac {\partial^{2} \log(-\Lambda_{D})} {\partial z_{\alpha} \partial \overline{z}_{\beta}} (z)
\]
Therefore
\[
\sum_{\alpha,\beta} \frac {\partial^{2} \log(-\Lambda_{D})} {\partial z_{\alpha} \partial \overline{z}_{\beta}} (z) v_{\alpha} \overline{v}_{\beta} = \sum_{l,m}  \frac {\partial^2 \log(-\Lambda_{D^{\prime}} ) } {\partial w_{l} \partial \overline{w}_{m}} (w) \Bigg\{ \sum_{\alpha} \bigg( \frac{\partial f_{l}}{\partial z_{\alpha}}(z)v_{\alpha} \bigg) \sum_{\beta} \overline{ \bigg(\frac{\partial f_{m}}{\partial z_{\beta}}(z) v_{\beta} \bigg)} \Bigg\}
\]
i.e.,
\[
\sum_{\alpha,\beta} \frac {\partial^{2} \log(-\Lambda_{D})} {\partial z_{\alpha} \partial \overline{z}_{\beta}} (z) v_{\alpha} \overline{v}_{\beta} = \sum_{l,m}  \frac {\partial^2 \log(-\Lambda_{D^{\prime}} ) } {\partial w_{l} \partial \overline{w}_{m}} (w)\big( df(z)v \big)_{l} \big( \overline{ df(z)v } \big)_{m}
\]
which is the same as (\ref{Lambda-metric-is-invariant}).
\end{proof}

\medskip

\no Let us now consider a $C^{2}$-smooth strongly pseudoconvex domain $D$ in $\mathbb{C}^{n}$. Fix a point $z_{0} \in \partial D$ and suppose that $\psi$ is a $C^{2}$-smooth defining function for $D$ with $|\nabla \psi(z_{0})| = 1$. Since translations and unitary rotations are isometries for the $\Lambda$-metric, the curvature of this metric remains unchanged under transformations of these kinds. Therefore, without loss of generality we will assume that $z_{0} = 0$ and $\nabla \psi (0) = (0, \ldots, 0, 1)$. Let $\mathcal{H}$ be the half space defined in theorem 1.1, i.e.,
\[
\mathcal{H} = \bigg\{ z \in \mathbb{C}^{n} : 2 \Re \Big(\sum_{i=1}^{n} \psi_{i}(0) z_{i} \Big) -1 < 0 \bigg\}.
\]
The Robin function for $\mathcal{H}$ is given by
\[ \Lambda_{\mathcal{H}}(z) = \frac{-\sum_{i=1}^{n} \big| \psi_{i}(0) \big|^{2n-2}} {\Big(2 \Re \big(\sum_{i=1}^{n} \psi_{i}(0) z_{i} \big) -1\Big)^{2n-2}} = - \bigg( 2 \Re \Big(\sum_{i=1}^{n} \psi_{i}(0) z_{i} \Big) -1 \bigg)^{-2n+2}. \]
An application of theorem 1.1 gives the following boundary behaviour of the components of the $\Lambda$-metric.
\begin{lem}\label{lim-g}
For $z \in D$ the functions $g_{\alpha \overline{\beta}}(z)$ satisfy
\begin{enumerate}
\item [(i)] $\lim_{z \rightarrow 0} g_{\alpha \overline{\beta}} (z) \big(\psi(z)\big)^{2} = (2n-2) \psi_{\alpha}(0) \psi_{\overline{\beta}}(0)$,

\item [(ii)] $\lim_{z \rightarrow 0} \frac {\partial g_{\alpha \overline{\beta}}} {\partial z_{\gamma}}(z) \big(\psi(z)\big)^{3} = -2(2n-2) \psi_{\alpha}(0) \psi_{\overline{\beta}}(p) \psi_{\gamma}(0)$ and

\item [(iii)] $\lim_{z \rightarrow 0} \frac{\partial^{2} g_{\alpha\overline{\beta}}} {\partial z_{\gamma} \partial \overline{z}_{\delta}}(z) \big(\psi(z)\big)^{4} = 6(2n-2) \psi_{\alpha}(p) \psi_{\overline{\beta}}(0) \psi_{\gamma}(0) \psi_{\overline{\delta}}(0)$
\end{enumerate}
for $1 \leq \alpha, \beta, \gamma, \delta \leq n$.
\end{lem}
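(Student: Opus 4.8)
My plan is to deduce the three limits not by substituting the asymptotics of theorem 1.1 term by term into the expansion $g_{\al \ov \be} = \La_{\al \ov \be}/\La - \La_{\al}\La_{\ov \be}/\La^2$ (which does work, but forces one to track several competing cancellations at each order), but rather from the scaling construction of section 3 together with the affine invariance of the $\La$--metric recorded in lemma 5.1. The point is that the quantities in (i)--(iii) are exactly the scale-normalized components of the metric, and the metric of the blown-up domains converges to that of $\cal H$. Concretely, fix an arbitrary sequence $z_j \ra z_0 = 0$ and form the blow-up maps $T_j(z) = (z - z_j)/(-\psi(z_j))$ of section 3, so that $T_j(z_j) = 0$, $D_j = T_j(D)$ and $0 \in D_j$. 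Each $T_j$ has the form $az + b$ with $a = (-\psi(z_j))^{-1} > 0$, which is the affine type covered by lemma 5.1 (with unitary part the identity). Hence the metric tensor is invariant under $T_j$, and since $dT_j = (-\psi(z_j))^{-1} I$ is a real scalar multiple of the identity, the transformation rule collapses to the identity in $z$
\[
g_{\al \ov \be}(z) = \frac{1}{\big(\psi(z_j)\big)^2}\, g^{D_j}_{\al \ov \be}\big(T_j(z)\big),
\]
where $g^{D_j}_{\al \ov \be}$ are the metric coefficients of $D_j$ in its coordinate $w$.

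Differentiating this identity in $z_{\gamma}$ and then in $\ov z_{\delta}$, and using that the Jacobian of $T_j$ is the diagonal scalar $(-\psi(z_j))^{-1}$, produces
\[
\big(\psi(z_j)\big)^3\,\frac{\pa g_{\al \ov \be}}{\pa z_{\gamma}}(z) = -\frac{\pa g^{D_j}_{\al \ov \be}}{\pa w_{\gamma}}\big(T_j(z)\big), \qquad \big(\psi(z_j)\big)^4\,\frac{\pa^2 g_{\al \ov \be}}{\pa z_{\gamma} \pa \ov z_{\delta}}(z) = \frac{\pa^2 g^{D_j}_{\al \ov \be}}{\pa w_{\gamma} \pa \ov w_{\delta}}\big(T_j(z)\big).
\]
Evaluating all three identities at $z = z_j$, where $T_j(z_j) = 0$, isolates precisely the scaled expressions $g_{\al \ov \be}(z_j)(\psi(z_j))^2$, $\pa_{\gamma} g_{\al \ov \be}(z_j)(\psi(z_j))^3$ and $\pa_{\gamma \ov \delta} g_{\al \ov \be}(z_j)(\psi(z_j))^4$ appearing in (i), (ii) and (iii), expressed in terms of $g^{D_j}$ and its derivatives at $0$.

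It then remains to let $j \ra \infty$. The proof of theorem 1.1 shows, via proposition 3.6 and differentiation under the integral sign in the double Poisson representation of lemma 2.2, that $\La_j \ra \La_{\cal H}$ together with all derivatives, uniformly on a neighbourhood of $0$ in $\cal H$. Since $\La_{\cal H} < 0$ there, $\log(-\La_j) \ra \log(-\La_{\cal H})$ in $C^{\infty}$ near $0$, so $g^{D_j}_{\al \ov \be}(0)$ and its first and second derivatives converge to the corresponding quantities for $\cal H$ at $0$. These last values are explicit: writing $u(z) = 1 - 2\Re\big(\sum_i \psi_i(0) z_i\big) > 0$ on $\cal H$, the formula for $\La_{\cal H}$ gives
\[
\log(-\La_{\cal H}) = -(2n-2)\log u, \qquad g^{\cal H}_{\al \ov \be} = (2n-2)\,\psi_{\al}(0)\,\psi_{\ov \be}(0)\,u^{-2},
\]
the last equality because $u$ is pluriharmonic. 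Since $u_{\gamma} = -\psi_{\gamma}(0)$, $u_{\ov \delta} = -\psi_{\ov \delta}(0)$ and $u(0) = 1$, successive differentiation yields $(2n-2)\psi_{\al}(0)\psi_{\ov \be}(0)$, $\;2(2n-2)\psi_{\al}(0)\psi_{\ov \be}(0)\psi_{\gamma}(0)$ and $\;6(2n-2)\psi_{\al}(0)\psi_{\ov \be}(0)\psi_{\gamma}(0)\psi_{\ov \delta}(0)$ for $g^{\cal H}_{\al \ov \be}$ and its two derivatives at $0$. Feeding these into the three scaling identities (the sign $-1$ in the first-derivative rule accounting for the factor in (ii)) gives exactly the right-hand sides of (i), (ii), (iii). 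As the sequence $z_j \ra 0$ was arbitrary and every limit is the same, the stated limits as $z \ra 0$ follow.

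The step I expect to require the most care is the convergence of the \emph{derivatives} of $g^{D_j}$ at $0$, i.e.\ the $C^{\infty}$ (or at least $C^4$) convergence $\La_j \ra \La_{\cal H}$ near $0$: mere pointwise convergence $\La_j(0) \ra \La_{\cal H}(0)$ is not enough, and one must invoke the uniform convergence of $H_j = G_j - \vert \cdot \vert^{-2n+2}$ on $\pa U \times \pa U$ from the proof of theorem 1.1 and then differentiate the integral representation of $\La_j$ in the pole variable to get convergence of the normalized metric together with its derivatives. Everything else is routine bookkeeping of the chain-rule factors $(-\psi(z_j))^{-k}$ and the elementary differentiation of $-(2n-2)\log u$.
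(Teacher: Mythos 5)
Your proof is correct, and it takes a genuinely different route from the paper's. The paper proves this lemma by direct substitution: it records the derivatives of $\Lambda_{\mathcal{H}}$ at $0$ up to fourth order, converts them via theorem 1.1 into limits of $\Lambda \psi^{2n-2}$, $\Lambda_a\psi^{2n-1}$, $\Lambda_{ab}\psi^{2n}$, $\Lambda_{abc}\psi^{2n+1}$, $\Lambda_{abcd}\psi^{2n+2}$, then expands $g_{\alpha\overline{\beta}}$, $\partial g_{\alpha\overline{\beta}}/\partial z_{\gamma}$ and $\partial^2 g_{\alpha\overline{\beta}}/\partial z_{\gamma}\partial\overline{z}_{\delta}$ as rational expressions in these derivatives, multiplies by $\psi^2$, $\psi^3$, $\psi^4$, distributes the powers among the factors, and passes to the limit term by term. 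You instead observe that the three scaled quantities in (i)--(iii) are exactly the metric data of the blown-up domains $D_j = T_j(D)$ at the centre $0$ --- your identity $g_{\alpha\overline{\beta}}(z) = (\psi(z_j))^{-2}\, g^{D_j}_{\alpha\overline{\beta}}(T_j(z))$ is legitimate because $dT_j$ is a positive real scalar times the identity, so lemma 5.1 applies with unitary part $I$ --- and that this data converges to the corresponding data of $\mathcal{H}$, which you compute by differentiating the potential $-(2n-2)\log u$ three times. I verified the chain-rule signs (the single minus in the first-derivative rule is right and produces the $-2(2n-2)$ in (ii)), the half-space computation, and the convergence step; everything checks. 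What your organization buys is that the cancellations the paper must track by hand --- most painfully in (iii), where roughly fifteen terms with coefficients built from $(2n-2),\dots,(2n+1)$ conspire to give the single coefficient $6(2n-2)$ --- are packaged automatically into the elementary differentiation of $u^{-2}$; it also makes transparent \emph{why} the answers are half-space metric data, and it sidesteps sign bookkeeping of the kind where the paper's own list has a slip (its displayed limit of $\Lambda_{abc}\psi^{2n+1}$ should have a plus sign, consistent with corollary 1.2, though the paper's final answers are unaffected). What the paper's route buys is that it uses theorem 1.1 purely as a black box, never re-entering the scaling construction. One refinement to your last paragraph: you do not actually need to reach into the proof of theorem 1.1 for $C^{\infty}$ convergence on a neighbourhood of $0$; the statement of theorem 1.1 combined with the scaling identity (3.2) is precisely the assertion $D^{A\overline{B}}\Lambda_j(0) \to D^{A\overline{B}}\Lambda_{\mathcal{H}}(0)$, and pointwise convergence at $0$ already suffices, because $g^{D_j}_{\alpha\overline{\beta}}$ and its first and second derivatives at $0$ are universal rational expressions in the finitely many quantities $D^{A\overline{B}}\Lambda_j(0)$, $|A|+|B|\le 4$, with denominators powers of $\Lambda_j(0) \to -1 \neq 0$.
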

\begin{proof}
We note that
\begin{itemize}
\item $\Lambda_{\mathcal{H}}(0) = -1$,

\item $(\Lambda_{\mathcal{H}})_{a}(0) = -(2n-2)\psi_{a}(0)$,

\item $(\Lambda_{\mathcal{H}})_{ab}(0) = -(2n-2)(2n-1) \psi_{a}(0) \psi_{b}(0)$,

\item $(\Lambda_{\mathcal{H}})_{abc}(0) = -(2n-2)(2n-1)(2n) \psi_{a}(0) \psi_{b}(0) \psi_{c}(0)$ and

\item $(\Lambda_{\mathcal{H}})_{abcd}(0) = -(2n-2)(2n-1)(2n)(2n+1) \psi_{a}(0) \psi_{b}(0) \psi_{c}(0) \psi_{d}(0)$
\end{itemize}
where the indices $a$, $b$, $c$, $d$ refer to either holomorphic or conjugate holomorphic derivatives. Hence by theorem 1.1 we get
\begin{itemize}
\item $\Lambda(z) \big(\psi(z))^{2n-2} \rightarrow -1$,

\item $\Lambda_{a}(z) \big(\psi(z) \big)^{2n-1} \rightarrow (2n-2) \psi_{a}(0)$,

\item $\Lambda_{ab}(z) \big(\psi(z) \big)^{2n} \rightarrow -(2n-2)(2n-1) \psi_{a}(0) \psi_{b}(0)$,

\item $\Lambda_{abc}(z) \big(\psi(z) \big)^{2n+1} \rightarrow -(2n-2)(2n-1)(2n) \psi_{a}(0) \psi_{b}(0) \psi_{c}(0)$ and

\item $\Lambda_{abcd}(z) \big(\psi(z) \big)^{2n+2} \rightarrow -(2n-2)(2n-1)(2n)(2n+1) \psi_{a}(0) \psi_{b}(0) \psi_{c}(0) \psi_{d}(0)$.
\end{itemize}
Now
\begin{align}\label{g-alpha-beta}
g_{\alpha \overline{\beta}} = \frac {\partial^{2} \log(-\Lambda)} {\partial z_{\alpha} \partial \overline{z}_{\beta}} = \frac {\Lambda_{\alpha \overline{\beta}}} {\Lambda} - \frac {\Lambda_{\alpha} \Lambda_{\overline{\beta}}} {\Lambda^{2}}.
\end{align}
Multiplying both sides of this equation by $\psi^{2}$, we get
\begin{align*}
g_{\alpha \overline{\beta}} \psi^{2} = \frac {\Lambda_{\alpha \overline{\beta}} \psi^{2n}} {\Lambda \psi^{2n-2}} - \frac {(\Lambda_{\alpha} \psi^{2n-1}) (\Lambda_{\overline{\beta}} \psi^{2n-1})} {(\Lambda \psi^{2n-2})^{2}}.
\end{align*}
It follows that 
\[
\lim_{z \rightarrow 0} g_{\alpha \overline{\beta}} (z) \left(\psi(z)\right)^{2} = (2n-2) \psi_{\alpha}(0) \psi_{\overline{\beta}}(0)
\]
which is (i).

\medskip

Differentiating (\ref{g-alpha-beta}) with respect to $z_{\gamma}$
\begin{align}\label{der-g-alpha-beta}
\frac {\partial g_{\alpha \overline{\beta}}} {\partial z_{\gamma}} = \frac {\Lambda_{\alpha \overline{\beta} \gamma}} {\Lambda} - \left(\frac {\Lambda_{\alpha \overline{\beta}} \Lambda_{\gamma}} {\Lambda^{2}} + \frac{\Lambda_{\alpha \gamma} \Lambda_{\overline{\beta}}} {\Lambda^{2}} + \frac{\Lambda_{\overline{\beta} \gamma} \Lambda_{\alpha}} {\Lambda^{2}} \right) + \frac{2 \Lambda_{\alpha} \Lambda_{\overline{\beta}} \Lambda_{\gamma}} {\Lambda^{3}}.
\end{align}
Multiplying both sides of this equation by $\psi^{3}$, we get
\begin{align*}
\frac {\partial g_{\alpha \overline{\beta}}} {\partial z_{\gamma}} \psi^{3} =  & \frac {\Lambda_{\alpha \overline{\beta} \gamma} \psi^{2n+1}} {\Lambda \psi^{2n-2}} - \left(\frac {(\Lambda_{\alpha \overline{\beta}} \psi^{2n}) (\Lambda_{\gamma} \psi^{2n-1})} {(\Lambda \psi^{2n-2})^{2}} + \frac{(\Lambda_{\alpha \gamma} \psi^{2n}) (\Lambda_{\overline{\beta}}\psi^{2n-1})} {(\Lambda \psi^{2n-2})^{2}} + \frac{(\Lambda_{\overline{\beta} \gamma} \psi^{2n}) (\Lambda_{\alpha} \psi^{2n-1})} {(\Lambda \psi^{2n-2})^{2}} \right)\\
& \hspace{2cm}  + \frac{2 (\Lambda_{\alpha} \psi^{2n-1}) (\Lambda_{\overline{\beta}} \psi^{2n-1}) (\Lambda_{\gamma} \psi^{2n-1})} {(\Lambda \psi^{2n-2})^{3}}.
\end{align*}
It follows that 
\[
\lim_{z \rightarrow 0} \frac {\partial g_{\alpha \overline{\beta}}} {\partial z_{\gamma}}(z) \psi(z)^{3} = -2(2n-2) \psi_{\alpha}(0) \psi_{\overline{\beta}}(p) \psi_{\gamma}(0)
\]
which is (ii).

\medskip

Differentiating (\ref{der-g-alpha-beta}) with respect to $\overline{z}_{\delta}$ we obtain
\begin{align*}
\frac{\partial^{2}g_{\alpha \overline{\beta} }}{\partial z_{\gamma}\partial\overline{z}_{\overline{\delta} }}=
\frac{\Lambda_{\alpha \overline{\beta} \gamma\overline{\delta} }}{\Lambda}
-\left(\frac{\Lambda_{\alpha \overline{\beta} \gamma}\Lambda_{\overline{\delta} }}{\Lambda^{2}}
+\frac{\Lambda_{\alpha \overline{\beta} \overline{\delta} }\Lambda_{\gamma}}{\Lambda^{2}}
+\frac{\Lambda_{\alpha \gamma\overline{\delta} }\Lambda_{\overline{\beta} }}{\Lambda^{2}}
+\frac{\Lambda_{\overline{\beta} \gamma\overline{\delta} }\Lambda_{\alpha }}{\Lambda^{2}}\right)
-\left(\frac{\Lambda_{\alpha \overline{\beta} }\Lambda_{\gamma\overline{\delta} }}{\Lambda^{2}}
+\frac{\Lambda_{\alpha \gamma}\Lambda_{\overline{\beta} \overline{\delta} }}{\Lambda^{2}}
+\frac{\Lambda_{\alpha \overline{\delta} }\Lambda_{\overline{\beta} \gamma}}{\Lambda^{2}}\right)\\
+2\left(\frac{\Lambda_{\alpha \overline{\beta} }\Lambda_{\gamma}\Lambda_{\overline{\delta} }}{\Lambda^{3}}
+\frac{\Lambda_{\alpha \gamma}\Lambda_{\overline{\beta} }\Lambda_{\overline{\delta} }}{\Lambda^{3}}
+\frac{\Lambda_{\overline{\beta} \gamma}\Lambda_{\alpha }\Lambda_{\overline{\delta} }}{\Lambda^{3}}
+\frac{\Lambda_{\alpha \overline{\delta} }\Lambda_{\overline{\beta} }\Lambda_{\gamma}}{\Lambda^{3}}
+\frac{\Lambda_{\overline{\beta} \overline{\delta} }\Lambda_{\alpha }\Lambda_{\gamma}}{\Lambda^{3}}
+\frac{\Lambda_{\gamma\overline{\delta} }\Lambda_{\alpha }\Lambda_{\overline{\beta} }}{\Lambda^{3}}\right)-
\frac{6\Lambda_{\alpha }\Lambda_{\overline{\beta} }\Lambda_{\gamma}\Lambda_{\overline{\delta} }}{\Lambda^{4}}.
\end{align*}
Multiplying both sides by $\psi^{4}$, this equation can be written in a form where $\Lambda$ is multiplied by $\psi^{2n-2}$ and first, second, third and fourth order derivatives of $\Lambda$ are multiplied by $\psi^{2n-1}$, $\psi^{2n}$, $\psi^{2n+1}$ and $\psi^{2n+2}$ respectively. It follows that 
\[
\lim_{z \rightarrow 0} \frac{\partial^{2} g_{\alpha \overline{\beta}}} {\partial z_{\gamma} \partial \overline{z}_{\overline{\delta}}}(z) \big( \psi(z) \big)^{4}\rightarrow 6(2n-2) \psi_{\alpha}(0) \psi_{\overline{\beta}}(0) \psi_{\gamma}(0) \psi_{\overline{\delta}}(0)
\] which is (iii).
\end{proof}

\no We now show that by allowing $z \rightarrow 0$ along the inner normal, it is possible to obtain stronger asymptotics for the functions $g_{\alpha \overline{\beta}}(z)$. Denote by $\mathcal{N}_{0}$ the inner normal to $\partial D$ at $0$ of some fixed length $\epsilon > 0$.

\begin{lem}\label{bdy-psi}
Let $a\in\{1,\ldots,n-1,\overline{1},\ldots,\overline{n-1}\}$. Then
\begin{align*}
\lim_{D\cap \mathcal{N}_{0}\ni z\rightarrow 0}\frac{\psi_{a}(z)}{\psi(z)}=\frac{1}{2}\big(\psi_{a\,n}(0)+\psi_{a\,\overline{n}}(0)\big).
\end{align*}
\end{lem}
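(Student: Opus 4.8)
The plan is to reduce this to a pair of first-order Taylor expansions along an explicit parametrization of the inner normal, so that the $0/0$ ratio resolves by cancellation. First I would record the consequences of the normalization fixed just before the lemma: since $\nabla\psi(0)=(0,\ldots,0,1)$ and $\psi$ is real-valued, we have $\psi_\alpha(0)=0$ for $1\le\alpha\le n-1$ together with $\psi_n(0)=\psi_{\ov n}(0)=1$. Converting to real coordinates $z_j=x_j+iy_j$, the real gradient of $\psi$ at $0$ is $2\,\partial/\partial x_n$ and vanishes in all other slots, so the outward unit normal points along $+\Re z_n$ and the inner normal $\mathcal{N}_0$ is the ray $z(t)=(0,\ldots,0,-t)$ for $0\le t\le\ep$. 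The crucial structural feature of this ray is that only the $z_n$- and $\ov z_n$-coordinates vary, with $dz_n/dt=d\ov z_n/dt=-1$, so along it the total $t$-derivative of any $C^1$ function $\phi$ collapses to $\tfrac{d}{dt}\phi(z(t))=-\phi_{z_n}(z(t))-\phi_{\ov z_n}(z(t))$.

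Applying this to the denominator, $\psi(0)=0$ and $\tfrac{d}{dt}\psi(z(t))\big|_{t=0}=-\psi_n(0)-\psi_{\ov n}(0)=-2$, so differentiability of $\psi$ gives $\psi(z(t))=-2t+o(t)$; in particular $\psi(z(t))<0$ for small $t>0$, which keeps the quotient well defined. For the numerator I would use that $a$ is a tangential index, so $\psi_a(0)=0$, and apply the same collapsed chain rule to the $C^1$ function $\psi_a$ (here $\psi$ being $C^2$ is exactly what is needed): $\tfrac{d}{dt}\psi_a(z(t))\big|_{t=0}=-\psi_{a\,n}(0)-\psi_{a\,\ov n}(0)$, whence $\psi_a(z(t))=-t\big(\psi_{a\,n}(0)+\psi_{a\,\ov n}(0)\big)+o(t)$.

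Dividing the two expansions, the factors of $t$ cancel and the $o(t)$ remainders become negligible, yielding
\[
\frac{\psi_a(z(t))}{\psi(z(t))}=\frac{-t\big(\psi_{a\,n}(0)+\psi_{a\,\ov n}(0)\big)+o(t)}{-2t+o(t)}\ra \frac{1}{2}\big(\psi_{a\,n}(0)+\psi_{a\,\ov n}(0)\big)
\]
as $t\downarrow 0$, which is the asserted limit (the answer is independent of the speed of any parametrization of the ray, so the specific normalization $z(t)=-t\,e_n$ is harmless). I do not expect a genuine obstacle: the computation is entirely a l'Hôpital-type cancellation of leading Taylor terms. The one delicate point is bookkeeping rather than depth — one must correctly read off the inner-normal direction from the normalization (the \emph{real} gradient has length $2$ and points purely along $\Re z_n$, not along a complex direction), and then notice that moving along precisely this ray forces both the $z_n$- and $\ov z_n$-derivatives to enter with equal weight, which is what produces the symmetric combination $\psi_{a\,n}(0)+\psi_{a\,\ov n}(0)$ instead of either term alone.
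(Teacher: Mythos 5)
Your proof is correct and follows essentially the same route as the paper: restrict to the ray $z(t)=(0,\ldots,0,-t)$ determined by the normalization $\nabla\psi(0)=(0,\ldots,0,1)$, Taylor-expand numerator and denominator to first order (the paper writes the expansion of $\psi$ and $\psi_a$ in the ambient variables and then sets $z_1=\cdots=z_{n-1}=0$, $z_n=x_n$, which amounts to the same computation), and cancel the common factor of $t$ using $\psi_a(0)=0$ and $\psi_n(0)=\psi_{\ov n}(0)=1$. The only cosmetic difference is that the paper expands $\psi$ to second order before dividing by $x_n$, whereas you invoke first-order differentiability along the ray, which suffices.
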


\begin{proof}
Expand $\psi$ in a neighbourhood of the origin as
\begin{align}\label{eq1_rho}
\psi(z) = 2 \Re \Big(z_{n} + \frac{1}{2} \sum_{\alpha,\beta=1}^{n} \psi_{\alpha \beta}(0) z_{\alpha}z_{\beta} \Big) + \sum_{\alpha,\beta=1}^{n} \psi_{\alpha \overline{\beta}}(0) z_{\alpha} \overline{z}_{\beta} + o(|z|^{2}).
\end{align}
Differentiating the above equation with respect to $z_{a}$,
\begin{align}\label{eq2_rho}
\psi_{a}(z) = \sum_{\beta=1}^{n} \psi_{a \beta}(0) z_{\beta} + \sum_{\beta=1}^{n} \psi_{a \overline{\beta}}(0) \overline{z}_{\beta} + o(|z|).
\end{align}
Note that for $z \in \mathcal{N}_{0}$ we have $z_{1} = \ldots = z_{n-1} = 0$ and $z_{n} = x_{n}$. Hence from (\ref{eq1_rho}) and (\ref{eq2_rho}) we have
\begin{align*}
\lim_{D \cap \mathcal{N}_{0} \ni z \rightarrow 0} \frac{\psi_{a}(z)}{\psi(z)}
& = \lim_{D \cap \mathcal{N}_{0} \ni z \rightarrow 0} \frac{\psi_{a}(z)/x_{n}}{\psi(z)/x_{n}}\\
& = \lim_{x_{n} \rightarrow 0} \frac{\psi_{a n}(0)+\psi_{a \overline{n}}(0)+o(|x_{n}|)/x_{n}}{2 +
\Re \{\psi_{n n}(0)\} x_{n} + \psi_{n \overline{n}}(0) x_{n} + o(|x_{n}|^{2})/x_{n}}\\
& = \frac{1}{2}\big(\psi_{a n}(0)+\psi_{a\,\overline{n}}(0)\big)
\end{align*}
\end{proof}

\begin{lem}\label{asymptotics-of-Lambda-along-N_0}
Let $a \in \{ 1, \ldots, n-1, \overline{1}, \ldots, \overline{n-1} \}$ and $b \in \{1, \ldots, n, \overline{1}, \ldots, \overline{n} \}$. Then
\begin{enumerate}
\item [(i)] $\displaystyle \lim_{D \cap \mathcal{N}_{0} \ni z \rightarrow 0} \Lambda_{a}(z) \big(\psi(z)\big)^{2n-2}  =  \lambda_{a}(0)+(2n-2)C_{a}$.

\item [(ii)] $\displaystyle \lim_{D \cap \mathcal{N}_{0} \ni z\rightarrow 0} \Lambda_{a b}(z) \big(\psi(z)\big)^{2n-1}  = -(2n-2)\lambda_{a}(0)\psi_{b}(0) -(2n-2)(2n-1)\psi_{b}(0)C_{a} + (2n-2)\psi_{ab}(0)$
\end{enumerate}
where $C_{a} = \frac{1}{2}\big(\psi_{a n}(0)+\psi_{a\,\overline{n}}(0)\big)$.
\end{lem}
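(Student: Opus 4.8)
\noindent The plan is to obtain both asymptotics by differentiating the relation $\Lambda = \lambda\,\psi^{-2n+2}$ from (4.1), multiplying by the appropriate power of $\psi$, and letting $z \to 0$ along $\mathcal{N}_{0}$. Three facts will drive every limit: $\lambda$ is $C^{2}$ up to $\partial D$, so that $\lambda$ together with all its first and second order derivatives extends continuously to $0$; the normalization $\nabla\psi(0) = (0,\ldots,0,1)$, which forces $\psi_{a}(0) = 0$ for every tangential index $a$ and, together with $\lambda(z) \to -|\nabla\psi(0)|^{2n-2}$, gives $\lambda(0) = -1$; and Lemma \ref{bdy-psi}, which supplies the decisive limit $\psi_{a}(z)/\psi(z) \to C_{a}$ along $\mathcal{N}_{0}$. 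It is this last ingredient that renders the otherwise indeterminate terms convergent.

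For (i), a single differentiation of $\Lambda = \lambda\,\psi^{-2n+2}$ yields $\Lambda_{a}\,\psi^{2n-2} = \lambda_{a} - (2n-2)\,\lambda\,(\psi_{a}/\psi)$. As $z \to 0$ along $\mathcal{N}_{0}$ the first term tends to $\lambda_{a}(0)$, while in the second term $\lambda \to -1$ and $\psi_{a}/\psi \to C_{a}$ by Lemma \ref{bdy-psi}; hence the right-hand side tends to $\lambda_{a}(0) + (2n-2)C_{a}$, which is (i).

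For (ii), I would differentiate once more and multiply through by $\psi^{2n-1}$, arriving at an expression of the form $\Lambda_{ab}\,\psi^{2n-1} = \psi\,\lambda_{ab} - (2n-2)\bigl(\lambda_{a}\,\psi_{b} + \lambda_{b}\,\psi_{a}\bigr) - (2n-2)\lambda\,\psi_{ab} + (2n-2)(2n-1)\,\lambda\,\psi_{a}\,\psi_{b}\,\psi^{-1}$. Each term is then read off at $0$: one has $\psi\,\lambda_{ab} \to 0$; the term $-(2n-2)\lambda_{b}\,\psi_{a}$ vanishes because $\psi_{a}(0) = 0$ for tangential $a$; $-(2n-2)\lambda\,\psi_{ab} \to (2n-2)\psi_{ab}(0)$ using $\lambda(0) = -1$; and $-(2n-2)\lambda_{a}\,\psi_{b} \to -(2n-2)\lambda_{a}(0)\psi_{b}(0)$. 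The genuinely singular term is rewritten as $(2n-2)(2n-1)\,\lambda\,\psi_{b}\,(\psi_{a}/\psi)$, which by Lemma \ref{bdy-psi} tends to $-(2n-2)(2n-1)\psi_{b}(0)C_{a}$. Summing these four contributions reproduces exactly the right-hand side of (ii). Note that $b$ is allowed to be a normal index, so $\psi_{b}(0)$ need not vanish, whereas only the tangential hypothesis on $a$ is used.

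The main obstacle is precisely the term carrying the factor $\psi^{-1}$: since $\psi_{a}(0) = 0$ for tangential $a$, the numerator $\psi_{a}\psi_{b}$ vanishes at $0$ while $\psi^{-1}$ blows up, so the limit is a priori indeterminate and cannot be extracted from the continuity of $\lambda$ alone. It is exactly to control this ratio that one restricts to the inner normal $\mathcal{N}_{0}$, where Lemma \ref{bdy-psi} converts $\psi_{a}/\psi$ into the finite constant $C_{a}$; the same device is what will later be needed to push through the curvature computation underlying Theorem 1.6.
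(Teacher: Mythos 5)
Your proposal is correct and follows essentially the same route as the paper: differentiate $\Lambda = \lambda\,\psi^{-2n+2}$, exploit the $C^{2}$ regularity of $\lambda$ on $\overline D$ with $\lambda(0) = -1$ and $\psi_{a}(0)=0$ for tangential $a$, and use Lemma \ref{bdy-psi} to resolve the indeterminate ratio $\psi_{a}/\psi \to C_{a}$ along $\mathcal{N}_{0}$. The term-by-term limits you compute match the paper's proof exactly.
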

\begin{proof}
Consider $\lambda(z) = \Lambda(z) \big( \psi(z) \big)^{2n-2}$ for $z \in D$. The function $\lambda$ is $C^{2}$ on $\overline{D}$ with boundary values $\lambda(z) = - |\nabla \psi(z)|^{2n-2}$ for $z \in \partial D$. In particular $\lambda(0) = -1$. Differentiating $\lambda$ with respect to $z_{a}$ we obtain

\[
\Lambda_{a} \psi^{2n-2} = \lambda_{a} - (2n-2) \frac{1}{\psi} \lambda \psi_{a}
\]
Letting $z \rightarrow 0$ along $\mathcal{N}_{0}$ we obtain
\begin{align}\label{lim-Lambda-i}
\lim_{D \cap \mathcal{N}_{0} \ni z \rightarrow 0} \Lambda_{a}(z) \big( \psi(z) \big)^{2n-2}  =  \lambda_{a}(0) + (2n-2) C_{a}
\end{align}
where
\[ 
C_{a} = \lim_{D\cap \mathcal{N}_{0}\ni z\rightarrow 0}\frac{\psi_{a}(z)}{\psi(z)}=\frac{1}{2}\big(\psi_{a n}(0)+\psi_{a \overline{n}}(0)\big)
\]
by lemma \ref{bdy-psi}. Similarly
\begin{align*}
\Lambda_{a b} \psi^{2n-1} = \, & \lambda_{a b} \psi - (2n-2)
(\lambda_{a}\psi_{b}+\lambda_{b}\psi_{a})\\
& + (2n-2)(2n-1) \frac{1}{\psi} \lambda \psi_{a} \psi_{b}
- (2n-2) \lambda \psi_{ab}
\end{align*}
which gives
\begin{align}\label{lim-Lambda-ij}
\begin{split}
\lim_{D \cap \mathcal{N}_{0} \ni z\rightarrow 0} \Lambda_{a b}(z) \big( \psi(z) \big)^{2n-1}  = & -(2n-2)\lambda_{a}(0)\psi_{b}(0)
-(2n-2)(2n-1)\psi_{b}(0) C_{a} \\ & + (2n-2)\psi_{a b}(0).
\end{split}
\end{align}
\end{proof}


\begin{lem}\label{g_ij*psi}
Let $1\leq \alpha \leq n-1$ and $1\leq \beta \leq n$. Then
\[
\lim_{D \cap \mathcal{N}_{0} \ni z \rightarrow 0} g_{\alpha \overline{\beta}}(z) \big(\psi(z) \big) = (2n-2) \left( \frac{1}{2} \big\{ \psi_{\alpha n}(0) + \psi_{\alpha \overline{n}}(0) \big\} \psi_{\overline{\beta}}(0)-\psi_{\alpha\overline{\beta}}(0)\right).
\]
\end{lem}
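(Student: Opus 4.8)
The plan is to begin from the defining identity \eqref{g-alpha-beta},
\[
g_{\alpha\overline{\beta}} = \frac{\Lambda_{\alpha\overline{\beta}}}{\Lambda} - \frac{\Lambda_{\alpha}\Lambda_{\overline{\beta}}}{\Lambda^{2}},
\]
multiply it by $\psi$, and distribute powers of $\psi$ so that each grouped factor has a known finite limit as $z\to0$ along $\mathcal{N}_{0}$. Explicitly I would write
\[
g_{\alpha\overline{\beta}}\,\psi = \frac{\Lambda_{\alpha\overline{\beta}}\,\psi^{2n-1}}{\Lambda\,\psi^{2n-2}} - \frac{\big(\Lambda_{\alpha}\,\psi^{2n-2}\big)\big(\Lambda_{\overline{\beta}}\,\psi^{2n-1}\big)}{\big(\Lambda\,\psi^{2n-2}\big)^{2}},
\]
which is an exact rewriting once one checks the exponents. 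Here $\Lambda\,\psi^{2n-2}\to-1$ and, by theorem 1.1, $\Lambda_{\overline{\beta}}\,\psi^{2n-1}\to(2n-2)\psi_{\overline{\beta}}(0)$ for every $\beta$; the remaining two factors involve a derivative in the tangential index $\alpha$ ($1\le\alpha\le n-1$) and are therefore controlled by the sharper normal-direction expansion of lemma \ref{asymptotics-of-Lambda-along-N_0}.

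For the first quotient I would substitute part (ii) of lemma \ref{asymptotics-of-Lambda-along-N_0} with $a=\alpha$ and $b=\overline{\beta}$, together with $\Lambda\,\psi^{2n-2}\to-1$. For the second quotient I would use part (i), namely $\Lambda_{\alpha}\,\psi^{2n-2}\to\lambda_{\alpha}(0)+(2n-2)C_{\alpha}$, paired with the limit above for $\Lambda_{\overline{\beta}}\,\psi^{2n-1}$. Adding the two contributions gives
\[
g_{\alpha\overline{\beta}}\,\psi \to (2n-2)\lambda_{\alpha}(0)\psi_{\overline{\beta}}(0) + (2n-2)(2n-1)C_{\alpha}\psi_{\overline{\beta}}(0) - (2n-2)\psi_{\alpha\overline{\beta}}(0) - (2n-2)\psi_{\overline{\beta}}(0)\big(\lambda_{\alpha}(0)+(2n-2)C_{\alpha}\big).
\]
The two $\lambda_{\alpha}(0)\psi_{\overline{\beta}}(0)$ terms cancel, while the coefficient of $C_{\alpha}\psi_{\overline{\beta}}(0)$ collapses through $(2n-1)-(2n-2)=1$ to a single factor $(2n-2)$. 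Inserting $C_{\alpha}=\tfrac12\big(\psi_{\alpha n}(0)+\psi_{\alpha\overline{n}}(0)\big)$ then produces exactly the asserted limit. I would emphasize that a single computation covers all $\beta$: for $\beta\le n-1$ one has $\psi_{\overline{\beta}}(0)=0$ so only the $-\psi_{\alpha\overline{\beta}}(0)$ term survives, whereas for $\beta=n$ one has $\psi_{\overline{n}}(0)=1$, and both agree with the stated formula.

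The essential point, and the reason this lemma does not follow from the coarser lemma \ref{lim-g}, is precisely the cancellation above. Under the normalization $\psi_{\alpha}(0)=0$ for $\alpha\le n-1$, lemma \ref{lim-g}(i) gives only $g_{\alpha\overline{\beta}}\,\psi^{2}\to0$, that is, $g_{\alpha\overline{\beta}}$ blows up strictly more slowly than $\psi^{-2}$; pinning down the true $\psi^{-1}$ coefficient forces one to the next order, and in particular to the correction $(2n-2)C_{\alpha}$ in lemma \ref{asymptotics-of-Lambda-along-N_0}(i), which itself rests on lemma \ref{bdy-psi} (the behaviour of $\psi_{a}/\psi$ along $\mathcal{N}_{0}$). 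I expect the only real obstacle to be bookkeeping: aligning the powers of $\psi$ so that every grouped factor genuinely converges, and then confirming that the leading $\lambda_{\alpha}$-terms cancel so that no indeterminacy remains. Since all the grouped limits exist along $\mathcal{N}_{0}$ and the exponents leave no residual $0\cdot\infty$, the limit of $g_{\alpha\overline{\beta}}\,\psi$ exists and equals the value claimed.
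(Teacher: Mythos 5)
Your proposal is correct and follows essentially the same route as the paper: the identical splitting $g_{\alpha\overline{\beta}}\,\psi = \Lambda_{\alpha\overline{\beta}}\psi^{2n-1}/(\Lambda\psi^{2n-2}) - (\Lambda_{\alpha}\psi^{2n-2})(\Lambda_{\overline{\beta}}\psi^{2n-1})/(\Lambda\psi^{2n-2})^{2}$, with $\Lambda\psi^{2n-2}\to-1$ and $\Lambda_{\overline{\beta}}\psi^{2n-1}\to(2n-2)\psi_{\overline{\beta}}(0)$ from Theorem 1.1, and parts (i), (ii) of the normal-direction lemma for the tangential index $\alpha$, after which the $\lambda_{\alpha}(0)$ terms cancel and $(2n-1)-(2n-2)=1$ yields the stated limit. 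Your closing observations on the two cases of $\beta$ and on why the coarser Lemma 5.2 is insufficient are accurate but not needed for the proof itself.
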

\begin{proof}
We have
\[ g_{\alpha \overline{\beta}} = \frac{\partial ^{2} \log(-\Lambda)}{\partial z_{\alpha} \partial \overline{z}_{j}}
= \frac{\Lambda_{\alpha \overline{\beta}}}{\Lambda} - \frac{\Lambda_{\alpha} \Lambda_{\overline{\beta}}}{\Lambda ^{2}}.\]
Multiplying both sides of this equation by $\psi$ we get
\begin{align}\label{g_ij-times-psi}
g_{\alpha \overline{\beta}}\psi^{} & = \frac{\Lambda_{\alpha \overline{\beta}}\psi^{2n-1}}{\Lambda\psi^{2n-2}}-\frac{(\Lambda_{\alpha}\psi^{2n-2})(\Lambda_{\overline{\beta}}\psi^{2n-1})}{(\Lambda\psi^{2n-2})^{2}}.
\end{align}
As in the proof of proposition \ref{lim-g}, 
\[
\Lambda(z) \big( \psi (z) \big)^{2n-2} \rightarrow -1
\]
and 
\[
\Lambda_{\overline{\beta}}(z) \big( \psi(z) \big)^{2n-1} \rightarrow (2n-2) \psi_{\overline{\beta}}(0)
\]
as $z \rightarrow 0$. Therefore letting $z \rightarrow 0$ along $D \cap \mathcal{N}_{0}$ in (\ref{g_ij-times-psi}) and using lemma \ref{asymptotics-of-Lambda-along-N_0} we obtain
\begin{align*}
\lim_{D \cap \mathcal{N}_{0} \ni z \rightarrow 0} g_{\alpha \overline{\beta}}(z) \psi(z)
& = (2n-2) \lambda_{\alpha}(0) \psi_{\overline{\beta}}(0) + (2n-2)(2n-1) \psi_{\overline{\beta}}(0)C - (2n-2) \psi_{\alpha\overline{\beta}}(0)\\
& \hspace{1cm} - \big \{\lambda_{\alpha}(0) + (2n-2) C_{\alpha} \big\} \big \{(2n-2) \psi_{\overline{\beta}}(0) \big\}
\end{align*}
Simplifying the right hand side we obtain
\begin{align*}
\lim_{D \cap \mathcal{N}_{0} \ni z \rightarrow 0} g_{\alpha \overline{\beta}}(z) \big( \psi(z) \big) =
& = (2n-2) \big( \psi_{\overline{\beta}}(0) C_{\alpha} - \psi_{\alpha\overline{\beta}}(0) \big)\\
& = (2n-2) \Big(\frac{1}{2} \big\{\psi_{\alpha n}(0) + \psi_{\alpha \overline{n}}(0) \big\} \psi_{\overline{\beta}}(0) - \psi_{\alpha \overline{\beta}}(0) \Big).
\end{align*}
\end{proof}


\begin{lem}\label{der-g_ij*psi}
Let $1\leq \alpha, \gamma \leq n$ and $1\leq \beta \leq n-1$. Then
\[
\lim_{D\cap \mathcal{N}_{0}\ni z\rightarrow 0}\frac{\partial g_{\alpha \overline{\beta}}}{\partial z_{\gamma}}(z) \big( \psi(z) \big)^{2}
\]
exists and is finite.
\end{lem}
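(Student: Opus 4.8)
The plan is to differentiate the interior identity \eqref{der-g-alpha-beta} for $\pa g_{\alpha\ov\beta}/\pa z_{\gamma}$, multiply through by $\psi^{2}$, and read off the limit along $\mathcal{N}_{0}$ term by term. Recall that \eqref{der-g-alpha-beta} expresses $\pa g_{\alpha\ov\beta}/\pa z_{\gamma}$ as the single third-order term $\Lambda_{\alpha\ov\beta\gamma}/\Lambda$, three mixed second-order terms $\Lambda_{\alpha\ov\beta}\Lambda_{\gamma}/\Lambda^{2}$, $\Lambda_{\alpha\gamma}\Lambda_{\ov\beta}/\Lambda^{2}$, $\Lambda_{\ov\beta\gamma}\Lambda_{\alpha}/\Lambda^{2}$, and the first-order term $2\Lambda_{\alpha}\Lambda_{\ov\beta}\Lambda_{\gamma}/\Lambda^{3}$. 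The decisive structural feature is that, under the normalisation $z_{0}=0$, $\nab\psi(0)=(0,\dots,0,1)$, the hypothesis $\beta\le n-1$ makes the index $\ov\beta$ tangential, so that every factor carrying $\ov\beta$ obeys the \emph{sharper} asymptotics of Lemma \ref{asymptotics-of-Lambda-along-N_0}, which gain one power of $\psi$ over the generic rates used in the proof of Lemma \ref{lim-g}.

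First I would treat the lower-order terms. Counting powers with the generic rates $\Lambda\,\psi^{2n-2}$, $\Lambda_{a}\,\psi^{2n-1}$, $\Lambda_{ab}\,\psi^{2n}\to$ const together with the sharper tangential rates $\Lambda_{\ov\beta}\,\psi^{2n-2}$, $\Lambda_{\ov\beta\gamma}\,\psi^{2n-1}$, $\Lambda_{\alpha\ov\beta}\,\psi^{2n-1}\to$ const from Lemma \ref{asymptotics-of-Lambda-along-N_0}, each of the four groups above is of order $\psi^{-2}$: the one extra power saved by the tangential $\ov\beta$-factor is exactly what lowers the naive order $\psi^{-3}$ to $\psi^{-2}$. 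Hence after multiplication by $\psi^{2}$ each of these four contributions has a finite limit along $\mathcal{N}_{0}$ (the denominators being harmless since $\Lambda\,\psi^{2n-2}\to-1$), and these limits can be written down explicitly from Lemma \ref{asymptotics-of-Lambda-along-N_0} if one wishes.

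This isolates the one genuinely problematic contribution, the top-order term $(\Lambda_{\alpha\ov\beta\gamma}/\Lambda)\,\psi^{2}$, and controlling it is the crux. Theorem 1.1 only supplies $\Lambda_{\alpha\ov\beta\gamma}\,\psi^{2n+1}\to$ const, so by itself it makes this term $O(\psi^{-1})$, precisely the indeterminacy flagged in the introduction. The plan is to remove it by proving the sharper statement that $\Lambda_{\alpha\ov\beta\gamma}\,\psi^{2n}$ has a finite limit along $\mathcal{N}_{0}$ whenever $\ov\beta$ is tangential; granting this, the top-order term is $O(\psi^{-2})\cdot\psi^{2}=O(1)$ and converges, which with the previous paragraph finishes the proof. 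I would obtain the sharper third-order asymptotic by iterating the mechanism of Lemmas \ref{bdy-psi} and \ref{asymptotics-of-Lambda-along-N_0} one order higher: the relation behind Lemma \ref{asymptotics-of-Lambda-along-N_0}(ii), namely $\Lambda_{\ov\beta\gamma}\,\psi^{2n-1}=\lambda_{\ov\beta\gamma}\,\psi-\cdots$, already gains a power of $\psi$ because $\psi_{\ov\beta}(0)=0$; differentiating it once more in $z_{\alpha}$, restricting to $\mathcal{N}_{0}$, and again evaluating the resulting quotients $\psi_{\ov\beta}/\psi$ by Lemma \ref{bdy-psi}, should produce the extra power needed to bring $\Lambda_{\alpha\ov\beta\gamma}$ down to order $\psi^{-2n}$.

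The main obstacle is exactly this last step. It goes one derivative beyond what Theorem 1.1 and Lemma \ref{asymptotics-of-Lambda-along-N_0} provide, and carrying the differentiation through requires one more derivative of $\lambda=\Lambda\,\psi^{2n-2}$ than the $C^{2}(\ov D)$ regularity of \cite{LY} formally guarantees; this is why the improvement is available only along the normal and only for the tangential index $\beta\le n-1$, and it is the reason the statement asserts merely existence and finiteness of the limit rather than computing it outright.
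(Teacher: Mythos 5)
Your decomposition of $\pa g_{\al\ov\be}/\pa z_{\gamma}$ via (\ref{der-g-alpha-beta}), your treatment of the four lower-order groups using the sharper tangential asymptotics of Lemma \ref{asymptotics-of-Lambda-along-N_0}, and your reduction of the whole problem to showing that $\La_{\al\ov\be\gamma}(z)\big(\psi(z)\big)^{2n}$ has a finite limit along $\mathcal{N}_{0}$ all coincide with the paper's proof. The gap is in the crux step, and you have in fact diagnosed it yourself: your proposed mechanism --- differentiating the identity behind Lemma \ref{asymptotics-of-Lambda-along-N_0}(ii) once more in $z_{\al}$ and restricting to $\mathcal{N}_{0}$ --- requires uniform control of \emph{third} derivatives of $\la = \La\,\psi^{2n-2}$ up to $\ov D$, whereas \cite{LY} proves only that $\la$ is $C^{2}$ on $\ov D$ (anything beyond that is precisely their open conjecture). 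So what you call the ``main obstacle'' is not merely an obstacle: at exactly the point where the lemma goes beyond Theorem 1.1, the argument appeals to regularity that is not available, and the proof is incomplete.

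The paper circumvents this by not differentiating $\la$ at all. It returns to the Green function: since $G_{\al}(z,\xi) = \big(\pa/\pa z_{\al} + \pa/\pa\xi_{\al}\big)G(z,\xi)$ and $G_{\al\ov\be}(z,\xi) = \big(\pa/\pa\ov z_{\be} + \pa/\pa\ov\xi_{\be}\big)G_{\al}(z,\xi)$ are symmetric in $(z,\xi)$ and harmonic in each variable, differentiating the symmetry relation and setting $z=\xi$ gives
\[
\La_{\al\ov\be\gamma}(\xi) = 2\,\frac{\pa G_{\al\ov\be}}{\pa z_{\gamma}}(\xi,\xi).
\]
Then, using the scaled domains $D(\xi)$ and the auxiliary functions $h_{\al}(w,\xi)$, $h_{\al\ov\be}(w,\xi)$ of \cite{LY}, which are harmonic in the interior variable $w$, one obtains
\[
\frac{\pa G_{\al \ov \be}}{\pa z_{\gamma}}(\xi,\xi)\big(\psi(\xi)\big)^{2n} = 2n\,\frac{\psi_{\ov\be}(\xi)}{\psi(\xi)}\,\frac{\pa h_{\al}}{\pa w_{\gamma}}(0,\xi) - \frac{\pa^{2}h_{\al}}{\pa w_{\gamma}\pa\ov{\xi}_{\be}}(0,\xi).
\]
The point is that the extra $\gamma$-derivative lands on the harmonic variable $w$, not on $\xi$, so no boundary regularity of $\la$ beyond $C^{2}$ is ever invoked; the continuity of $\pa h_{\al}/\pa w_{\gamma}(0,\xi)$ and of $\pa^{2}h_{\al}/\pa w_{\gamma}\pa\ov\xi_{\be}(0,\xi)$ at $\xi = 0 \in \pa D$ is supplied by chapters 4 and 5 of \cite{LY}, and the factor $\psi_{\ov\be}/\psi$ is bounded along $\mathcal{N}_{0}$ by Lemma \ref{bdy-psi} precisely because $\be \le n-1$. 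This Green-function identity and the boundary continuity of the $h$-functions are the ingredients your outline is missing; without them (or some substitute for the $C^{3}$ regularity your route would need) the proposal does not prove the lemma.
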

\begin{proof}
Multiplying both sides of (\ref{g-alpha-beta}) by $\psi^{2}$, we obtain
\begin{align*}
\frac {\partial g_{\alpha \overline{\beta}}} {\partial z_{\gamma}} \psi^{2} =  & \frac {\Lambda_{\alpha \overline{\beta} \gamma} \psi^{2n}} {\Lambda \psi^{2n-2}} - \left(\frac {(\Lambda_{\alpha \overline{\beta}} \psi^{2n-1}) (\Lambda_{\gamma} \psi^{2n-1})} {(\Lambda \psi^{2n-2})^{2}} + \frac{(\Lambda_{\alpha \gamma} \psi^{2n}) (\Lambda_{\overline{\beta}}\psi^{2n-2})} {(\Lambda \psi^{2n-2})^{2}} + \frac{(\Lambda_{\overline{\beta} \gamma} \psi^{2n-1}) (\Lambda_{\alpha} \psi^{2n-1})} {(\Lambda \psi^{2n-2})^{2}} \right)\\
& \hspace{2cm}  + \frac{2 (\Lambda_{\alpha} \psi^{2n-1}) (\Lambda_{\overline{\beta}} \psi^{2n-2}) (\Lambda_{\gamma} \psi^{2n-1})} {(\Lambda \psi^{2n-2})^{3}}.
\end{align*}
In view of lemma \ref{asymptotics-of-Lambda-along-N_0} it is seen that the second and third terms have finite limits near the origin along $\mathcal{N}_{0}$ and hence it is enough to study the first term. In particular it suffices to show that
\[
\lim_{D\cap \mathcal{N}_{0}\ni z\rightarrow 0}\Lambda_{\alpha \overline{\beta} \gamma}(z) \big( \psi(z) \big)^{2n}
\]
exists and is finite. To do this we follow the arguments of lemma 6.2 in \cite{LY}. The Green function $G(z,\xi)$ for $D$ with pole at $\xi$ can be written as
\begin{align}\label{G-exp}
G(z, \xi) = |z-\xi|^{-2n+2} + \Lambda(\xi) + H(z, \xi)
\end{align}
where $H(z, \xi)$ is a harmonic function of $z$ for each $\xi \in D$ and satisfies
\begin{align}\label{H-0}
H(\xi,\xi)=0.
\end{align}
The function $G(z,\xi) - |z-\xi|^{-2n+2}$ is real analytic and symmetric in $D \times D$, and harmonic in $z$ and in $\xi$. Since
\[\bigg(\frac{\partial}{\partial z_{\alpha}} + \frac{\partial}{\partial \xi_{\alpha}}\bigg) |z-\xi|^{-2n+2} \equiv 0\]
it follows that the function
\begin{align*}
G_{\alpha}(z,\xi) := \bigg(\frac{\partial G}{\partial z_{\alpha}}+\frac{\partial G}{\partial \xi_{\alpha}}\bigg) (z, \xi)
\end{align*}
is real analytic and symmetric in $D\times D$ and harmonic in both $z$ and $\xi$. Similarly the function
\begin{align*}
G_{\alpha \overline{\beta}}(z,\xi) := \bigg(\frac{\partial G_{\alpha}}{\partial\overline{z}_{\beta}} + \frac{\partial G_{i }} {\partial \overline{\xi}_{\beta}} \bigg)(z, \xi)
\end{align*}
is real analytic and symmetric in $D\times D$ and harmonic in both $z$ and $\xi$. From (\ref{G-exp}) 
\[G_{\alpha}(\xi, \xi) = \frac{\partial \Lambda}{\partial \xi_{\alpha}}(\xi) + \frac{\partial H}{\partial z_{\alpha}}(\xi, \xi) + \frac{\partial H}{\partial \xi_{\alpha}}(\xi, \xi).\]
>From (\ref{H-0}) the function $\tilde{H}(\xi) :=  H(\xi, \xi)$ is identically $0$ on $D$. Therefore
\[\frac{\partial \tilde{H}}{\partial \xi_{\alpha}}(\xi) = \frac{\partial H}{\partial z_{\alpha}}(\xi, \xi) + \frac{\partial H}{\partial \xi_{\alpha}}(\xi, \xi) = 0\]
and hence
\[\frac{\partial \Lambda}{\partial \xi_{\alpha}}(\xi) = G_{\alpha}(\xi,\xi) .\]
Differentiating this with respect to $\xi_{\overline{\beta}}$
\[\frac{\partial ^{2} \Lambda}{\partial \xi_{\alpha} \xi_{\overline{\beta}}} = \frac{\partial G_{\alpha}}{\partial z_{\overline{\beta}}}(\xi, \xi) +  \frac{\partial G_{\alpha}}{\partial \xi_{\overline{\beta}}}(\xi, \xi) = G_{\alpha \overline{\beta}}(\xi, \xi).\]
Differentiating this with respect to $\xi_{\gamma}$
\[\frac{\partial ^{3} \Lambda}{\partial \xi_{\alpha} \overline{\xi}_{\beta} \xi_{\gamma}}(\xi) = \frac{\partial G_{\alpha \overline{\beta}}}{\partial z_{\gamma}}(\xi, \xi) + \frac{\partial G_{\alpha \overline{\beta}}}{\partial \xi_{\gamma}}(\xi, \xi).\]
Now differentiating the symmetry formula $G_{\alpha \overline{\beta}}(z, \xi) = G_{\alpha \overline{\beta}}(\xi, z)$ with respect to $\xi_{\gamma}$ and setting $z = \xi$ we obtain 
\[
\frac{\partial G_{\alpha \overline{\beta}}}{\partial \xi_{\gamma}}(\xi, \xi) = \frac{\partial G_{\alpha \overline{\beta}}}{\partial z_{\gamma}}(\xi, \xi)
\]
which gives
\begin{align}\label{Lambda-G-alpha-beta-gamma}
\Lambda_{\alpha \overline{\beta} \gamma}(\xi) = 2 \frac{\partial G_{\alpha \overline{\beta}}}{\partial z_{\gamma}}(\xi, \xi).
\end{align}
For $\xi \in D$, consider the linear maps $T_{\xi}$ on $\mathbb{C}^{n}$ defined as before by
\begin{align*}
T_{\xi}(z)=\frac{z-\xi}{-\psi(\xi)}
\end{align*}
and let
\begin{align*}
D(\xi)=
\begin{cases}
T_{\xi}(D) & \text{if $\xi \in D$},\\
\left\{w\in\mathbb{C}^{n} : 2 \big(\text{Re}\sum_{ i =1}^{n}\frac{\partial \psi}{\partial z_{ i }}(\xi)w_{ i } \big)-1<0 \right\} & \text{if $\xi \in \partial D$}.
\end{cases}
\end{align*}
Note that $0 \in D(\xi)$ for each $\xi$. We denote by $g(w,\xi)$ the Green's function for $D(\xi)$ with pole at $0$ and define the functions
\begin{align*}
h_{0}(w,\xi) & :=g(w,\xi)+\frac{1}{n-1}\sum_{i=1}^{n}w_{i}\frac{\partial g}{\partial w_{i}}(w,\xi),\\
h_{\alpha}(w,\xi) & :=\psi(\xi) \frac{\partial g}{\partial\xi_{\alpha}}(w,\xi) - (n-1) \psi_{\alpha} (\xi) \big(g_{0}(w,\xi) + \overline{g_{0}(w,\xi)}\big) \text{ and }\\
h_{\alpha \overline{\beta}}(w,\xi) & := -(2n-1) \psi_{\overline{\beta}}g_{\alpha}+ \psi \frac{\partial g_{\alpha}} {\partial \overline{\xi}_{\beta}} - \psi_{\overline{\beta}} \sum_{j = 1}^{n} \Big(w_{j} \frac{\partial g_{\alpha}} {\partial w_{j}}+\overline{w}_{j} \frac{\partial g_{\alpha}}{\partial \overline{w}_{j}}\Big)
\end{align*}
For each $\xi \in \overline{D}$, the above functions are harmonic in $w \in D(\xi)$. Now differentiating 
\[ g(w,\xi) = \big( \psi(\xi) \big)^{2n-2} G(z,\xi)\]
we obtain
\begin{align}\label{g-alpha-G-alpha}
h_{\alpha \overline{\beta}}(w,\xi) = G_{\alpha \overline{\beta}}(z,\xi) \big( \psi(\xi) \big)^{2n} \hspace{.5cm}\text{for $w \in D(\xi),\, \xi \in D$}.
\end{align}
Differentiating this with respect to $z_{\gamma}$ and using the definition of $h_{\alpha \overline{\beta}}$ we obtain
\begin{align*}
\frac{\partial G_{\alpha \overline{\beta}}}{\partial z_{\gamma}}(\xi,\xi)\big(\psi(\xi)\big)^{2n} & =-\frac{1}{\psi(\xi)}\frac{\partial h_{\alpha \overline{\beta}}}{\partial w_{\gamma}}(0,\xi)\\
& = 2n\frac{\psi_{\overline{\beta}}(\xi)}{\psi(\xi)}\frac{\partial h_{\alpha}}{\partial w_{\gamma}}(0,\xi)-\frac{\partial^{2}h_{\alpha}}{\partial w_{\gamma}\partial\overline{\xi}_{\beta}}(0,\xi)
\end{align*}
The results of chapters 4 and 5 of \cite{LY} show that $\frac{\partial h_{\alpha}}{\partial w_{\gamma}}(0,\xi)$ and thus also $\frac{\partial^{2}h_{\alpha}}{\partial w_{\gamma}\partial\overline{\xi}_{\beta}}(0,\xi)$ are continuous at $0\in\partial D$. Hence by lemma \ref{bdy-psi}, we have
\begin{align*}
\lim_{D\cap \mathcal{N}_{0}\ni z\rightarrow 0}\frac{\partial G_{\alpha \overline{\beta}}}{\partial z_{\gamma}}(\xi,\xi)\big(\psi(\xi)\big)^{2n}
\end{align*}
exists and is finite. This together with (\ref{Lambda-G-alpha-beta-gamma}) implies that
\begin{align*}
\lim_{D\cap \mathcal{N}_{0}\ni z\rightarrow 0}\Lambda_{\alpha \overline{\beta} \gamma}(z) \big( \psi(z) \big)^{2n}
\end{align*}
exists and is finite.
\end{proof}

\begin{rem}
The conclusion of lemma \ref{der-g_ij*psi} is not a consequence of theorem 1.1. A direct application of this theorem only yields
\[
 \frac {\partial g_{\alpha \overline{\beta}}} {\partial z_{\gamma}}(z) \backsim \frac {1} {\big( \psi(z) \big)^{3}}
\]
for $z \in D$ close to the origin as can be seen from proposition \ref{lim-g}.
\end{rem}

\begin{lem}\label{det}
The limit
\[
\lim_{D \cap \mathcal{N}_{0} \ni z \rightarrow 0} \det \big( g_{\alpha \overline{\beta}}(z) \big) \big( \psi(z) \big)^{n+1}
\]
exists and is nonzero.
\end{lem}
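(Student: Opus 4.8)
The plan is to exploit the fact that, as $z \ra 0$ along $\mathcal{N}_0$, the entries of $\big(g_{\al\ov\be}\big)$ blow up at two different rates, both of which have already been computed in lemmas \ref{lim-g} and \ref{g_ij*psi}. Recall that after the normalization $z_0 = 0$, $\nab\psi(0) = (0,\ldots,0,1)$ we have $\psi_\al(0) = 0$ for $1 \le \al \le n-1$ and $\psi_n(0) = \psi_{\ov n}(0) = 1/2$, so that the complex tangent space to $\pa D$ at the origin is $\mathrm{span}(e_1,\ldots,e_{n-1})$. Lemma \ref{lim-g}(i) then gives $g_{n\ov n}(z)\big(\psi(z)\big)^2 \ra (2n-2)\psi_n(0)\psi_{\ov n}(0) = (n-1)/2 =: d \ne 0$, while $g_{\al\ov\be}(z)\big(\psi(z)\big)^2 \ra 0$ for every $(\al,\be) \ne (n,n)$. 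Lemma \ref{g_ij*psi} sharpens the latter: for $1 \le \al \le n-1$ and $1 \le \be \le n$ the product $g_{\al\ov\be}(z)\psi(z)$ has a finite limit along $\mathcal{N}_0$, and by Hermitian symmetry so does $g_{n\ov\be}(z)\psi(z)$. In the tangential range $\al,\be \le n-1$ this limit is $a_{\al\ov\be} := -(2n-2)\psi_{\al\ov\be}(0)$, since $\psi_{\ov\be}(0) = 0$ there.

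First I would separate the one fast direction from the $n-1$ slow ones by a diagonal row scaling. Put $P = \mathrm{diag}\big(\psi,\ldots,\psi,\psi^2\big)$, so that $\det\big(P\,(g_{\al\ov\be})\big) = \big(\psi\big)^{n+1}\det\big(g_{\al\ov\be}\big)$. The advantage is that every entry of the scaled matrix $P\,(g_{\al\ov\be})$ now converges along $\mathcal{N}_0$: for $\al \le n-1$ its entries are $\psi\,g_{\al\ov\be} \ra a_{\al\ov\be}$, while in the last row the corner entry is $\psi^2 g_{n\ov n} \ra d$ and the off-corner entries are $\psi^2 g_{n\ov\be} = \psi\cdot\big(\psi g_{n\ov\be}\big) \ra 0$ for $\be \le n-1$. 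Hence $P\,(g_{\al\ov\be})$ tends entrywise to a block upper triangular matrix whose bottom row is $(0,\ldots,0,d)$ and whose upper-left $(n-1)\times(n-1)$ block is $A_0 := \big(a_{\al\ov\be}\big)_{1\le\al,\be\le n-1}$. By continuity of the determinant,
\[
\lim_{D\cap\mathcal{N}_0\ni z\ra 0}\det\big(g_{\al\ov\be}(z)\big)\big(\psi(z)\big)^{n+1} = d\cdot\det(A_0).
\]

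It then remains to verify that $d\cdot\det(A_0) \ne 0$. Clearly $d = (n-1)/2 > 0$, and $\det(A_0) = \big(-(2n-2)\big)^{n-1}\det\big(\psi_{\al\ov\be}(0)\big)_{1\le\al,\be\le n-1}$. The matrix $\big(\psi_{\al\ov\be}(0)\big)_{\al,\be\le n-1}$ is precisely the Levi form of $\psi$ at the origin restricted to the complex tangent space $\mathrm{span}(e_1,\ldots,e_{n-1})$; since $D$ is strongly pseudoconvex this form is positive definite, so its determinant is strictly positive. Consequently $d\cdot\det(A_0) \ne 0$, which is the assertion of the lemma.

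The main obstacle is the $0\cdot\infty$ indeterminacy flagged in the introduction and in the remark following lemma \ref{der-g_ij*psi}: working only with the leading asymptotics $g_{\al\ov\be}\big(\psi\big)^2 \ra (2n-2)\psi_\al(0)\psi_{\ov\be}(0)$ is useless here, since the limiting matrix $\big(\psi_\al(0)\psi_{\ov\be}(0)\big)$ has rank one and hence vanishing determinant. The whole point is that the $n-1$ tangential entries actually blow up one power of $\psi$ more slowly than the single normal entry $g_{n\ov n}$, and the diagonal scaling $P$ is tailored to record exactly this; the finer normal-direction limits of lemma \ref{g_ij*psi} then furnish the genuine tangential block $A_0$. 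The delicate point to get right is that the off-corner entries of the last row, $\psi^2 g_{n\ov\be}$ with $\be \le n-1$, really tend to zero, for this is what renders the limit matrix block triangular and allows strong pseudoconvexity to enter through the nondegeneracy of the tangential Levi form in $\det(A_0)$.
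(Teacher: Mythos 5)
Your proposal is correct and is essentially the paper's own argument: the paper proves the lemma by expanding $\det\big(g_{\alpha\overline{\beta}}\big)\,\psi^{n+1}$ along the $n$-th row into cofactors, attaching $\psi^{2}$ to the entries $g_{n\overline{\alpha}}$ and one factor of $\psi$ to each of the tangential rows, then invoking lemma \ref{lim-g} for the normal row and lemma \ref{g_ij*psi} for the tangential block — exactly the content of your diagonal scaling $P=\mathrm{diag}(\psi,\dots,\psi,\psi^{2})$ followed by passage to the block-triangular limit matrix, with the same conclusion $d\cdot\det(A_0)\neq 0$ via positive definiteness of the restricted Levi form. The only cosmetic discrepancy is your reading of the normalization $\nabla\psi(0)=(0,\dots,0,1)$ as giving $\psi_{n}(0)=\tfrac12$, whereas the paper's expansion $\psi(z)=2\Re(z_{n}+\cdots)+\cdots$ forces $\psi_{n}(0)=1$; this changes the numerical value of the limit but not its existence or nonvanishing, which is all the lemma asserts.
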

\begin{proof}
Let $(\Delta_{\alpha \overline{\beta}})$ be the cofactor matrix of $(g_{\alpha \overline{\beta}})$. Then expanding by the $n$-th row,
\[
\det( g_{\alpha \overline{\beta}} ) = g_{n \overline{1}} \Delta_{n \overline{1}} + \ldots + g_{n \overline{n}} \Delta_{n \overline{n}}.
\]
Therefore,
\begin{align}\label{det-times-psi}
\det(g_{\alpha \overline{\beta}}) \psi^{n+1} = (g_{n \overline{1}} \psi^{2}) (\Delta_{n \overline{1}} \psi^{n-1}) + \ldots + (g_{n \overline{n}} \psi^{2}) (\Delta_{n \overline{n}} \psi^{n-1}).
\end{align}
Note that
\begin{align*}
\Delta_{n \overline{\alpha}} \psi^{n-1}
& = \psi^{n-1} (-1)^{n + \alpha} \det
\begin{pmatrix}
g_{1 \overline{1}} & \dots & g_{1 \overline{\alpha-1}} & g_{1 \overline{\alpha + 1}} & \dots & g_{1 \overline{n}}\\
\hdotsfor[1.5]{6}\\
g_{n-1 \overline{1}} & \dots & g_{n-1 \overline{\alpha -1}} & g_{n-1\overline{\alpha+1}} & \dots & g_{n-1\overline{n}}
\end{pmatrix}\\
& = (-1)^{n+\alpha} \det
\begin{pmatrix}
g_{1\overline{1}}\psi & \dots & g_{1\overline{\alpha-1}}\psi & g_{1\overline{\alpha+1}}\psi & \dots & g_{1\overline{n}}\psi\\
\hdotsfor[1.5]{6}\\
g_{n-1 \overline{1}}\psi & \dots & g_{n-1\overline{\alpha-1}}\psi & g_{n-1\overline{\alpha+1}}\psi & \dots & g_{n-1\overline{n}}\psi
\end{pmatrix}
\end{align*}
By lemma \ref{g_ij*psi}, if $1 \leq \alpha \leq n-1$ and $1 \leq \beta \leq n$, then the term $g_{\alpha \overline{\beta}}\psi$ converges to a finite quantity as $z \rightarrow 0$ along $\mathcal{N}_{0}$. It follows that if $1 \leq \alpha \leq n-1$ then
\[
\lim_{D \cap \mathcal{N}_{0} \ni z \rightarrow 0} \Delta_{n \overline{\alpha}}(z) \big( \psi(z) \big)^{n-1}
\]
exists and is finite. Also if $1 \leq \alpha, \beta \leq n-1$, then $g_{\alpha \overline{\beta}}\psi$ converges to $-(2n-2)\psi_{\alpha \overline{\beta}}(0)$. Therefore
\begin{align*}
\lim_{D \cap \mathcal{N}_{0} \ni z \rightarrow 0} \Delta_{n \overline{n}}(z) \big( \psi(z) \big)^{n-1} = (-1)^{n} (2n-2)^{n} \det \big(\psi_{\alpha \overline{\beta}}(0) \big)_{1\leq \alpha, \beta \leq n-1}.
\end{align*}
Finally by proposition \ref{lim-g}, if $1 \leq \alpha, \beta \leq n$, then $g_{\alpha \overline{\beta}}\psi^{2}$ converges to $(2n-2) \psi_{\alpha}(0) \psi_{\overline{\beta}}(0)$. Now it follows from (\ref{det-times-psi}) that
\begin{align*}
\lim_{D \cap \mathcal{N}_{0} \ni z \rightarrow 0} \det \big(g_{\alpha \overline{\beta}}(z) \big) \big( \psi(z) \big)^{n+1} = (-1)^{n}(2n-2)^{n+1} \det \big( \psi_{\alpha \overline{\beta}}(0) \big)_{1\leq \alpha,\beta \leq n-1}\neq 0
\end{align*}
as $D$ is strictly pseudoconvex at $0$.
\end{proof}

\no {\it Proof of Theorem 1.6:} Note that for $z \in \mathcal{N}_{0}$ the first $n-1$ components of $v_{N}(z)$ vanish and hence
\[
R \big( z, v_{N}(z) \big) = \frac{1}{\big( g_{n \overline{n}}(z) \big)^{2}} \bigg(-\frac{\partial^{2} g_{n \overline{n}}}{\partial z_{n} \partial \overline{z}^{n}}(z) + \sum_{\alpha, \beta =1}^{n} g^{\beta \overline{\alpha}}(z) \frac{\partial g_{n\overline{\alpha}}}{\partial z_{n}}(z) \frac{\partial g_{\beta \overline{n}}}{\partial \overline{z}^{n}}(z) \bigg).
\]
We compute the limit of the first term as $z \rightarrow 0$ using proposition \ref{lim-g}. Indeed
\begin{align*}
-\frac {1} {\big( g_{n \overline{n}}(z) \big)^{2}} \frac {\partial^{2} g_{n \overline{n}}} {\partial z_{n} \partial \overline{z}_{n}}(z)
& = -\frac {1} {\Big(g_{n \overline{n}} (z) \big( \psi(z) \big)^{2} \Big)^{2}} \frac{\partial^{2} g_{n \overline{n}}} {\partial z_{n} \partial \overline{z}_{n}}(z) \big( \psi(z) \big)^{4}\\
& \rightarrow -\frac {1} {\big\{ (2n-2) \psi_{n}(0) \psi_{\overline{n}}(0) \big\}^{2} } \big\{ 6(2n-2) \psi_{n}(0) \psi_{\overline{n}}(0) \psi_{n}(0) \psi_{\overline{n}}(0) \big\}\\
& = -\frac{3}{n-1}.
\end{align*}

\medskip

\no To compute the limit of the second term note that $g^{\beta \overline{\alpha}} = \Delta_{\alpha \overline{\beta}} / \det(g_{\alpha \overline{\beta}})$. There are various cases to be considered depending on $\alpha$ and $\beta$.\\

\medskip

\no Case 1:  $\alpha \neq n$, $\beta \neq n$. Here
\begin{align*}
\frac{1}{g_{n \overline{\alpha}}^{2}} g^{\beta \overline{\alpha}} \frac{\partial g_{n \overline{\alpha}}}{\partial z_{n}} \frac{\partial g_{\beta \overline{n}}}{\partial \overline{z}_{n}}
& = \frac{1}{(g_{n\overline{n}}\psi^{2})^{2} \big( \det(g_{i \overline{j}})\psi^{n+1} \big)}(\Delta_{\alpha \overline{\beta}} \psi^{n}) \bigg( \frac{\partial g_{n \overline{\alpha}}}{\partial z_{n}}\psi^{2} \bigg) \left(\frac{\partial g_{\beta \overline{n}}}{\partial\overline{z}_{n}}\psi^{3}\right)
\end{align*}
By lemma \ref{lim-g},
\[
g_{n \overline{n}}(z) \big( \psi^{2}(z) \big) \rightarrow (2n-2)
\]
as $z \rightarrow 0$. By lemma \ref{det}, $\det \big( g_{i \overline{j}}(z) \big) \big( \psi(z) \big)^{n+1}$ converges to a nonzero finite quantity as $z \rightarrow 0$ along $\mathcal{N}_{0}$. Also
\[
\Delta_{\alpha \overline{\beta}} = \sum_{\sigma}(-1)^{sgn(\sigma)}g_{1\overline{\sigma(1)}}g_{2\overline{\sigma(2)}}\cdots g_{n\overline{\sigma(n)}}
\]
where the summation runs over all permutations
\[
\sigma:\{1,\ldots,\alpha-1,\alpha+1,\ldots,n\}\rightarrow \{1,\ldots,\beta-1,\beta+1,\ldots,n\}
\]
Hence
\[
\Delta_{\alpha \overline{\beta}}\psi^{n} = \sum_{\sigma}(-1)^{sgn(\sigma)}(g_{1\overline{\sigma(1)}}\psi)(g_{2\overline{\sigma(2)}}\psi)\cdots (g_{n\overline{\sigma(n)}}\psi^{2}).
\]
By lemma \ref{g_ij*psi}, for $1 \leq i \leq n-1$, $g_{i\overline{\sigma(i)}}(z) \big( \psi(z) \big)$ converges to a finite quantity as $z \rightarrow 0$ along $\mathcal{N}_{0}$. Also
\[
g_{n\overline{\sigma(n)}}(z) \big( \psi(z) \big)^{2} \rightarrow (2n-2) \psi_{n}(0)\psi_{\overline{\sigma(n)}}(0)
\]
as $z \rightarrow 0$ by lemma \ref{lim-g}. Thus $\Delta_{\alpha \overline{\beta}}(z) \big( \psi(z) \big)^{n}$ converges to a finite quantity as $z \rightarrow 0$ along $\mathcal{N}_{0}$.

\medskip

\no By lemma \ref{der-g_ij*psi}, $\frac{\partial g_{n \overline{\alpha}} } {\partial z_{n}}(z) \big( \psi(z) \big)^{2}$ converges to a finite quantity as $z \rightarrow 0$ along $\mathcal{N}_{0}$ and by proposition \ref{lim-g}
\[
\frac{\partial g_{\beta \overline{n}}} {\partial \overline{z}_{n}}(z) \big( \psi(z) \big)^{3} = \overline{\bigg(\frac{\partial g_{n \overline{\beta}}} {\partial z_{n}}(z) \big( \psi(z) \big)^{3} \bigg)}
\rightarrow -2(2n-2) \overline{\big( \psi_{n}(0) \big)} \, \overline{\big( \psi_{\overline{\beta}}(0) \big)} \,  \overline{\big( \psi_{n}(0) \big)} = 0
\]
as $z \rightarrow 0$. Hence
\begin{align*}
\lim_{D\cap \mathcal{N}_{0}\ni z\rightarrow 0} \frac {1} {\big( g_{n\overline{n}}(z) \big)^{2}} g^{\beta \overline{\alpha}}(z) \frac {\partial g_{n\overline{\alpha}}} {\partial z_{n}}(z) \frac {\partial g_{\beta \overline{n}}} {\partial\overline{z}_{n}}(z) = 0.
\end{align*}

\medskip

\no Case 2: $\alpha = n$, $\beta \neq n$. Here
\begin{align*}
\frac{1}{g_{n \overline{n}}^{2}} g^{\beta \overline{n}} \frac{\partial g_{n \overline{n}}}{\partial z_{n}} \frac{\partial g_{\beta \overline{n}}}{\partial \overline{z}_{n}}
& = \frac{1}{(g_{n \overline{n}} \psi^{2})^{2} (\det(g_{i \overline{j}}) \psi^{n+1})} (\Delta_{n \overline{\beta}} \psi^{n-1}) \bigg( \frac{\partial g_{n \overline{n}}}{\partial z_{n}} \psi^{3} \bigg) \bigg( \frac{\partial g_{\beta \overline{n}}}{\partial \overline{z}_{n}} \psi^{3} \bigg)
\end{align*}
By lemma \ref{lim-g}, 
\[
g_{n \overline{n}}(z) \big( \psi(z) \big)^{2} \rightarrow (2n-2)
\]
as $z \rightarrow 0$. By lemma \ref{det}, $\det \big( g_{\alpha \overline{\beta}}(z) \big) \big( \psi(z) \big)^{n+1}$ has a nonzero limit and $\Delta_{n \overline{\beta}}(z) \big( \psi(z) \big)^{n-1}$ converges to a finite quantity as $z \rightarrow 0$ along $\mathcal{N}_{0}$. By proposition \ref{lim-g}, 
\[
\frac{\partial g_{n \overline{n}}} {\partial z_{n}}(z) \big( \psi(z) \big)^{3} \rightarrow -2(2n-2) \psi_{n}(0) \psi_{\overline{n}}(0) \psi_{n}(0) = -2(2n-2)
\]
and
\[
\frac{\partial g_{\beta \overline{n}}} {\partial \overline{z}_{n}}(z) \big( \psi(z) \big)^{3} = \overline{\bigg(\frac{\partial g_{n \overline{\beta}}} {\partial z_{n}}(z) \big( \psi(z) \big)^{3} \bigg)}
\rightarrow -2(2n-2) \overline{\big( \psi_{n}(0) \big)} \, \overline{\big( \psi_{\overline{\beta}}(0) \big)} \,  \overline{\big( \psi_{n}(0) \big)} = 0
\]
as $z \rightarrow 0$. Hence
\[
\lim_{D \cap \mathcal{N}_{0} \ni z \rightarrow 0} \frac {1} {\big( g_{n \overline{n}}(z) \big)^{2}} g^{\beta \overline{n}}(z) \frac {\partial g_{n\overline{n}} } {\partial z_{n}}(z) \frac{\partial g_{\beta \overline{n}}} {\partial\overline{z}_{n}}(z)  = 0
\]

\medskip

\no Case 3: $\alpha \neq n$ and $\beta = n$. This case is similar to Case 2 and we have
\begin{align*}
\lim_{D\cap \mathcal{N}_{0}\ni z\rightarrow 0}\frac{1}{\big(g_{n \overline{n}}(z) \big)^{2}} g^{n\overline{\alpha}}(z) \frac{\partial g_{n\overline{\alpha}}}{\partial z_{n}}(z) \frac{\partial g_{n\overline{n}}}{\partial\overline{z}_{n}}(z)
& = 0.
\end{align*}

\medskip

\no Case 4: $\alpha = n$, $\beta = n$. In this case we have
\begin{align*}
\frac{1}{g_{n\overline{n}}^{2}}g^{n\overline{n}}\frac{\partial g_{n\overline{n}}}{\partial z_{n}}\frac{\partial g_{n\overline{n}}}{\partial\overline{z}_{n}}
& = \frac{1}{(g_{n\overline{n}}\psi^{2})^{2} \big( \det(g_{i \overline{j}})\psi^{n+1} \big)}(\Delta_{n\overline{n}}\psi^{n-1})
\bigg( \frac{\partial g_{n\overline{n}}}{\partial z_{n}}\psi^{3} \bigg) \bigg( \frac{\partial g_{n\overline{n}}}{\partial\overline{z}_{n}}\psi^{3} \bigg).
\end{align*}
>From lemma \ref{lim-g}, 
\[
g_{n\overline{n}}(z) \big(\psi(z) \big)^{2} \rightarrow (2n-2)
\]
and both
\[
\frac {\partial g_{n \overline{n}} } {\partial z_{n}}(z) \big( \psi(z) \big)^{3}, \frac{\partial g_{n \overline{n}} } {\partial \overline{z}_{n}}(z) \big( \psi(z) \big)^{3} \rightarrow -2(2n-2)
\]
as $z \rightarrow 0$. From lemma \ref{det}
\[\Delta_{n \overline{n}}(z) \big( \psi(z) \big)^{n-1} \rightarrow (-1)^{n} (2n-2)^{n} \det \big(\psi_{i \overline{j}}(0) \big)_{1 \leq i, j \leq n-1}
\]
and 
\[
\det \big( g_{i \overline{j}}(z) \big) \big( \psi(z) \big)^{n+1} \rightarrow  (-1)^{n} (2n-2)^{n+1} \det \big( \psi_{i \overline{j}}(0) \big)_{1 \leq i, j \leq n-1}
\]
as $z \rightarrow 0$ along $\mathcal{N}_{0}$. Hence
\begin{align*}
\lim_{D \cap \mathcal{N}_{0} \ni z \rightarrow 0} \frac {1} {\big(g_{n \overline{n}}(z) \big)^{2}} g^{n \overline{n}}(z) \frac{\partial g_{n \overline{n}}} {\partial z_{n}}(z) \frac{\partial g_{n \overline{n}}} {\partial\overline{z}_{n}}(z)
& = \frac{2}{n-1}.
\end{align*}
>From the various cases we finally obtain
\begin{align*}
\lim_{D\cap \mathcal{N}_{0}\ni z\rightarrow 0} R\big( z,v_{N}(z) \big)=-3/(n-1) + 2/(n-1) = -1/(n-1).
\end{align*}

\begin{rem}
To understand the difficulty in computing the holomorphic sectional curvature along tangential directions let us consider the following example: let $z \in \mathcal{N}_{0}$ and $v = (1, \ldots, 0)$. Then $v_{T}(z) \equiv (1, \ldots, 0)$ and hence
\[
R\big( z, v_{T}(z) \big) = \frac {1} {\big( g_{1 \overline{1}}(z) \big)^{2} } \bigg(-\frac{\partial^{2} g_{1 \overline{1}}} {\partial z_{1} \partial \overline{z}_{1}}(z) + \sum_{\alpha, \beta =1}^{n} g^{\beta \overline{\alpha}}(z) \frac{\partial g_{1 \overline{\alpha}} }{\partial z_{1}} \frac{\partial g_{\beta \overline{1}}} {\partial \overline{z}_{1}}(z) \bigg).
\]
By lemma \ref{g_ij*psi},
\[
\lim_{D \cap \mathcal{N}_{0} \ni z \rightarrow 0}g_{1 \overline{1}}(z)  \psi(z)  = -(2n-2)\psi_{1 \overline{1}}(0) \neq 0
\]
as $D$ is strictly pseudoconvex at the origin and hence
\[
g_{1 \overline{1}}(z) \backsim \frac {1} { \psi(z) }
\]
for $z \in D \cap \mathcal{N}_{0}$ near the origin. Therefore we require that
\[
\frac{\partial^{2} g_{1 \overline{1}}} {\partial z_{1} \partial \overline{z}_{1}}(z) \backsim \frac{1} {\big( \psi(z) \big)^{2}}
\]
for $z \in D \cap \mathcal{N}_{0}$ near the origin. This asymptotic is much sharper than the one obtained from theorem 1.1 which is
\[
\frac{\partial^{2} g_{1 \overline{1}}} {\partial z_{1} \partial \overline{z}_{1}}(z) \backsim \frac{1} {\big( \psi(z) \big)^{4}}
\]
for $z \in D$ near the origin as computed in lemma \ref{lim-g}. Thus to compute the holomorphic sectional curvature along tangential directions a stronger claim about the blow up of the fourth order derivative of $\Lambda$ near the boundary is needed.
\end{rem}


\section{Convex domains with constant negative holomorphic sectional curvature}
\no In this section $D$ will be a smoothly bounded strongly convex domain in $\mathbb{C}^{n}$ equipped with the $\Lambda$-metric. Under the assumption that $ds^{2}_{z}$, i.e., the $\Lambda$-metric has constant negative holomorphic sectional curvature the goal wll be to prove that $D \simeq \mathbb{B}^{n}$ and that the $\Lambda$-metric is proportional to the pull back of the Bergman metric in $\mathbb{B}^{n}$. Note that
\[ \lim_{z \ra \partial D} F_{D}^{R}(z, v) = + \infty\]
for $v \in \mathbb{C}^{n}$ with $|v| = 1$ which is a consequence of theorem 1.3. Let $ds^{2}_{\Delta}$ be the Bergman metric on $\Delta$.
\begin{lem}
Let $D$ be as above and suppose that the holomorphic sectional curvature of the $\Lambda$-metric is bounded from below by a negative constant $-c^{2}$. Then for every holomorphic map $f : D \ra \Delta$, we have
\[ \Big( f^{*} \big( ds^{2}_{\Delta} \big) \Big)_{z} \leq \frac{c^{2}}{4} \, ds^{2}_{z} \]
for $z \in D$.
\end{lem}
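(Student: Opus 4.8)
The plan is to prove this as an instance of the Ahlfors--Schwarz--Yau lemma for holomorphic maps into a negatively curved target, carried out by a maximum principle argument on $D$. Since the target $\Delta$ is one dimensional, the pullback $f^*(ds^2_\Delta)$ is a semipositive Hermitian form of rank at most one, with $\big(f^*(ds^2_\Delta)\big)_{\al\ov\be} = \frac{2}{(1-|f|^2)^2}\, f_\al\, \ov f_\be$, where $f_\al = \pa f/\pa z_\al$ and $\frac{2}{(1-|w|^2)^2}$ is the Bergman metric coefficient of $\Delta$. The asserted inequality $f^*(ds^2_\Delta)\le\frac{c^2}{4}ds^2$ of Hermitian forms is therefore equivalent to the single scalar inequality $u\le c^2/4$ on $D$, where $u := g^{\al\ov\be}\big(f^*(ds^2_\Delta)\big)_{\al\ov\be} = \frac{2}{(1-|f|^2)^2}\, g^{\al\ov\be} f_\al \ov f_\be$ is the trace of the pullback with respect to the $\La$-metric (this trace equals the largest, and only nonzero, eigenvalue of $f^*(ds^2_\Delta)$ relative to $ds^2$). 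So the first step is to record this reduction, noting that on the set where $\pa f = 0$ the inequality is trivial and one works on the open set $\{u > 0\}$ with $\log u$.

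Next I would set up a maximum principle for $u$. Since $D$ is strongly convex it is strongly pseudoconvex, so the $\La$-metric is well defined and, by Theorem 1.4 and the remark following it, complete. Theorem 1.3 shows that $ds^2$ blows up like $\delta(z)^{-1}$ tangentially and $\delta(z)^{-2}$ normally as $z\to\pa D$, which will be used to guarantee that the relevant supremum of $u$ is governed by interior behaviour. Concretely, I would either argue that $\sup_D u$ is attained at an interior point, or, since $(D,ds^2)$ is complete with curvature bounded below, apply the Omori--Yau generalized maximum principle to produce points $z_k$ with $u(z_k)\to\sup_D u$, $\Delta_g\log u(z_k)\le 1/k$, and negligible first-order terms. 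Either way the conclusion is reduced to estimating $\Delta_g\log u$ at a (near-)maximum of $u$, where $\Delta_g = g^{\al\ov\be}\pa_\al\pa_{\ov\be}$.

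The heart of the argument is the Bochner computation of $\Delta_g\log u$. Writing $\log u = \log\!\big(g^\Delta\!\circ f\big) + \log\big(g^{\al\ov\be}f_\al\ov f_\be\big)$, the first term is handled by the chain rule together with the fact that the Bergman metric of $\Delta$ has constant negative holomorphic sectional curvature, i.e. its logarithmic potential reproduces the metric; since $f$ is holomorphic this contributes a term proportional to $+u$. For the second term, $\pa f$ is a holomorphic section of the cotangent bundle $T^{*(1,0)}D$, and the Bochner--Kodaira identity for the norm of a holomorphic section expresses $\Delta_g\log\big(g^{\al\ov\be}f_\al\ov f_\be\big)$ as a nonnegative Cauchy--Schwarz remainder minus a curvature contraction; discarding the remainder and bounding the curvature term below using the hypothesis that the holomorphic sectional curvature of the $\La$-metric is $\ge -c^2$ controls this piece. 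Feeding both estimates into $\Delta_g\log u\le 0$ at the maximum and rearranging yields $u\le c^2/4$, the precise constant emerging from the curvature normalization of the Bergman metric on $\Delta$ measured against the lower bound $-c^2$ on $D$.

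The main obstacle is the curvature term in the Bochner identity: a naive contraction of the cotangent curvature produces a Ricci-type quantity rather than a holomorphic sectional curvature, so a Royden-type refinement is needed to see that, because $f^*(ds^2_\Delta)$ is rank one and $u$ is maximized in the single direction $\pa f^\sharp$, the relevant lower bound is exactly the holomorphic sectional curvature $\ge -c^2$ along that direction. Pinning down the numerical constant so that it comes out as $\frac{c^2}{4}$ requires careful bookkeeping of these normalizations. A secondary technical point is the rigorous justification of the maximum principle on the noncompact complete manifold $(D,ds^2)$, for which completeness from Theorem 1.4 and the boundary blow-up from Theorem 1.3 are the key inputs.
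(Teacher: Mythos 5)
Your reduction of the form inequality to the scalar bound $u \le c^2/4$ for the rank-one pullback is fine, but both devices you propose for running the maximum principle have genuine gaps, and they are exactly the points the paper's proof (which follows Cheung--Wong) is built to avoid. On $D$ itself there is no reason for $\sup_D u$ to be attained at an interior point: the blow-up of $ds^2$ at $\partial D$ does not force $u \to 0$ there, because the numerator $f^*(ds^2_\Delta)$ need not be bounded on $D$ ($f$ need not extend to $\overline D$, and $|f|$ may tend to $1$); the most one gets, from the distance-decreasing property of the Kobayashi metric together with Theorem 1.4, is that $u$ stays \emph{bounded} near $\partial D$, and it can perfectly well approach its supremum only at the boundary (try $D = \mathbb{B}^n$, $f(z) = z_1$). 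Your fallback, Omori--Yau, requires Ricci curvature (Yau's version) or sectional curvature (Omori's version) bounded below, and neither follows from the lemma's hypothesis, which bounds only the holomorphic sectional curvature. The paper's substitute is a shrinking trick, and this is precisely where convexity enters: it sets $D_\alpha = \alpha D \subset\subset D$ and transports $ds^2$ to $D_\alpha$ by the dilation; then the quotient $u(z,v) = \big(f^*(ds^2_\Delta)\big)_z(v,v)\big/\big(ds^2_\alpha\big)_z(v,v)$ has numerator a fixed smooth form on $D \supset\supset \overline{D_\alpha}$, hence bounded, while the denominator still blows up at $\partial D_\alpha$, so $u \to 0$ at $\partial D_\alpha$ and an interior maximum exists; the bound is proved on each $D_\alpha$ and one lets $\alpha \to 1$ at the end.

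The second, more serious gap is the curvature bookkeeping in your Bochner step. In the Chern--Lu formula for $\Delta_g \log u$ with $u = \mathrm{tr}_g f^*h$, the source curvature enters as the Ricci tensor contracted against the rank-one form $f^*h$: the Laplacian traces one pair of curvature indices over a full frame, and the rank-one structure only pins the other pair to the direction $\sigma = \partial f^\sharp$. What appears is $\mathrm{Ric}(\sigma,\overline\sigma) = \sum_k R(\sigma,\overline\sigma,e_k,\overline e_k)$, and this is not bounded below by the single holomorphic sectional curvature $R(\sigma,\overline\sigma,\sigma,\overline\sigma)$. Royden's refinement, which you invoke, weakens the hypothesis on the \emph{target} (holomorphic sectional instead of bisectional curvature), not on the source; no such refinement converts the source Ricci term into a holomorphic sectional curvature term. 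To use only the holomorphic sectional curvature of the source one must maximize over directions as well, which is what the paper does: it treats $u$ as a function on $D_\alpha \times S_n$, and at the maximum $(z_0,v_0)$ chooses a one-dimensional complex submanifold $S$ through $z_0$ tangent to $v_0$ on which the ambient holomorphic sectional curvature $H_{ds^2_\alpha}(z_0,v_0) \ge -c^2$ is realized as the Gaussian curvature of $ds^2_\alpha|_S$ at $z_0$. The restriction of $u$ to $T(S)$ still has an interior maximum at $z_0$, the maximizing $v_0$ is perpendicular to $\ker df(z_0)$ (so the restricted pullback has Gaussian curvature $\le -4$ at $z_0$), and the classical one-variable Ahlfors--Schwarz comparison of the two conformal metrics on $S$ gives $u(z_0,v_0) \le c^2/4$. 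Without the passage to the sphere bundle and the curve $S$, the constant $c^2/4$ cannot be extracted from the hypothesis as stated.
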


\begin{proof}
Without loss of generality let us assume that $D$ contains the origin. Let $\alpha\in (0,1)$. Consider the shrinking map $g_{\alpha} : D \ra \mathbb{C}^{n}$ defined by $g_{\alpha}(z)=\alpha z$ and let $D_{\alpha} = g_{\alpha}(D)$. The mapping $g_{\alpha}$ is a biholomorphism of $D$ onto $D_{\alpha}$. Hence we define the Hermitian metric $ds^{2}_{\alpha}$ on $D_{\alpha}$ by setting
\[
\big( ds^{2}_{\alpha} \big)_{z} := \big( g_{\alpha}^{-1} \big)^{*} \big( ds^{2}_{z} \big).
\]
Observe that $ds^{2}_{\alpha}$ blows up near the boundary of $D_{\alpha}$. Indeed, let $z\in D_{\alpha}$ and $v\in\mathbb{C}^{n}$ with $|v|=1$. Then from the definition of $ds^{2}_{\alpha}$, we have
\[
F^{R}_{D_{\alpha}}(z,v) = \frac{1}{\alpha} \, F^{R}_{D} (z / \alpha, v).
\]
Since $z \ra \partial D_{\alpha}$ implies that $z / \alpha = g_{\alpha}^{-1}(z)\ra \partial D$, we obtain from the above equation that
\[
\lim_{z \ra \partial D_{\alpha}} F^{R}_{D_{\alpha}}(z,v)
= \lim_{z / \alpha \ra \partial D} \frac{1}{\alpha} \, F^{R}_{D} (z / \alpha, v) = + \infty.
\]
Note that $D_{\alpha}\subset\subset D$ as $D$ is convex. We now show that the metric $ds^{2}_{\alpha}$ actually converges to $ds^{2}_{z}$  as $\alpha \ra 1$ at each point in $D$. Let $z\in D$ and take $\alpha$ sufficiently close to $1$ so that $z\in D_{\alpha}$. Then for any $v, w \in \mathbb{C}^{n}$ it is evident that
\[
\big( ds^{2}_{\alpha} \big) _{z}(v,w) = \frac{1}{\alpha^{2}} \, ds^{2}_{z / \alpha}(v,w).
\]
Since $ds^{2}_{z}$ is a Hermitian metric, for fixed $v,w\in\mathbb{C}^{n}$, $ds^{2}_{p}(v,w)$ defines a smooth function of $p \in D$. Hence from the above equation it follows that $\big( ds^{2}_{\alpha} \big) _{z} \ra ds^{2}_{z}$ as $\alpha \ra 1$.

\medskip

The reason for constructing the metric $ds^{2}_{\alpha}$ is clear now. The lemma would be proved if we show that 
\[
f^{*} \big( ds^{2}_{\Delta} \big) \leq \frac{c^{2}}{4} \, ds^{2}_{\alpha}
\]
for each $\alpha \in (0,1)$. So let us fix $\alpha\in(0,1)$. Let $S_{n}=\{v\in\mathbb{C}^{n}:|v|=1\}$. Consider the function
\[
 u(z,v):= \frac{\big( f^{*}(ds^{2}_{\Delta}) \big)_{z}(v,v)}{\big( ds^{2}_{\alpha} \big)_{z}(v,v)}
\]
defined on $D_{\alpha} \times S_{n}$. The numerator is bounded on $\overline{D}_{\alpha} \times S_{n}$ and the denominator blows up on $\partial D_{\alpha} \times S_{n}$. Therefore the nonnegative function $u$ goes to zero at $\partial D_{\alpha}\times S_{n}$. In particular, $u$ attains its maximum at $(z_{0},v_{0})\in D_{\alpha}\times S_{n}$. If $u(z_{0},v_{0})=0$ then $u \equiv 0$ and in this case
\[
f^{*} \big( ds^{2}_{\Delta} \big) \leq \frac{c^{2}}{4} \,ds^{2}_{\alpha}.
\]
is evidently true. Therefore, we assume that $u(z_{0},v_{0})>0$. Let $S$ be a complex submanifold of $D_{\alpha}$ of dimension $1$ through $z_{0}$ and tangent to $v_{0}$, such that the holomorphic sectional curvature $H_{ds^{2}_{\alpha}}(z_{0},v_{0})$ of $ds^{2}_{\alpha}$ at $z_{0}$ in the direction of $v_{0}$ is equal to the Gaussian curvature $H_{\left. ds^{2}_{\alpha} \right| _{S}}(z_{0})$ of the restriction of $ds^{2}_{\alpha}$ on $S$ at $z_{0}$.

\medskip

Let $\xi$ be a local coordinate system around $z_{0}$ on $S$. Let $f^{*} \big( ds^{2}_{\Delta} \big) \big| _{S} = 2g \, d\xi \, d\overline{\xi}$ and  $\big( ds^{2}_{\alpha} \big) \big| _{S} = 2h \, d\xi \, d\overline{\xi}$. Then the restriction of $u$ on the tangent bundle $T(S)$ of $S$ is equal to $g / h$ on this coordinate system. The Gaussian curvature of the metric $f^{*} \big( ds^{2}_{\Delta} \big) \big| _{S} = 2 g \, d\xi \, d\overline{\xi}$ is given by
\[
H_{f^{*} \big( ds^{2}_{\Delta} \big) \big| _{S}} = -\frac{1}{g} \, \frac{\partial^{2}\log g}{\partial \xi \partial \overline{\xi}}.
\]
Since $u(z_{0}, v_{0}) > 0$, the mapping $f$ is a submersion near $z_{0}$. Therefore the holomorphic sectional curvature of $f^{*} \big( ds^{2}_{\Delta} \big)$ at $z_{0}$ along any vector $v \in \big(\ker df(z_{0}) \big) ^ {\perp}$ is equal to $-4$. In particular, since $v_{0} \in \big(\ker df(z_{0}) \big) ^ {\perp}$, we have
\[
H_{f^{*} \big( ds^{2}_{\Delta} \big) \big| _{S}} (z_{0}) \leq H_{f^{*} \big( ds^{2}_{\Delta} \big)} (z_{0}, v_{0}) = -4.
\]
Thus
\begin{equation}
\frac{\partial^{2}\log g}{\partial \xi \partial \overline{\xi}}(z_{0}) \geq 4 g(z_{0}).
\end{equation}
\no The Gaussian curvature of the metric $\big( ds^{2}_{\alpha} \big) \big| _{S} = 2h \, d\xi \, d\overline{\xi}$ is given by
\[
H_{\big( ds^{2}_{\alpha} \big) \big| _{S}} = -\frac{1}{h} \, \frac{\partial^{2} \log h}{\partial \xi \partial \overline{\xi}}.
\]
By our assumption
\[
H_{\big( ds^{2}_{\alpha} \big) \big| _{S}} (z_{0}) = H_{ds^{2}_{\alpha}} (z_{0},v_{0}) \geq -c^{2}
\]
and hence
\begin{equation}
\frac{\partial^{2} \log h}{\partial \xi \partial \overline{\xi}}(z_{0}) \leq c^{2} h(z_{0}).
\end{equation}
Since $u$ attains a maximum at $(z_{0},v_{0})$, we obtain from (6.1) and (6.2) that
\[
0 \geq \frac{\partial^{2} \log u}{\partial \xi \partial \overline{\xi}}(z_{0}) = \frac{\partial^{2} \log g}{\partial \xi \partial \overline{\xi}}(z_{0}) - \frac{\partial^{2} \log h}{\partial \xi \partial \overline{\xi}}(z_{0})
\geq 4 g(z_{0})-c^{2} h(z_{0}).
\]
This implies that 
\[
u(z_{0},v_{0}) = g(z_{0})/h(z_{0}) \leq c^{2}/4.
\]
Thus $u \leq c^{2}/4$ and hence
\[
f^{*} \big( ds^{2}_{\Delta} \big) \leq \frac{c^{2}}{4} \, ds^{2}_{\alpha}
\]
which completes the proof.
\end{proof}

\no {\it Proof of Theorem 1.6:} Suppose the holomorphic sectional curvature of $ds^{2}_{z}$ is equal to $-c^{2}$. We divide the proof of the theorem into two parts. In the first part using only the fact that the holomorphic sectional curvature of $ds^{2}_{z}$ is bounded above by $-c^{2}$ we show that the inequality 
\begin{equation}
F^{K}_{D}(z, v) \geq \frac{c}{2} \, F^{R}_{D}(z, v)
\end{equation}
holds for $z \in D$ and $v$ a tangent vector at $z$. The convexity and blow up of the length of a vector at the boundary does not play any role here. The second part is based on lemma 6.1 and using $-c^{2}$ as a lower bound for the curvature we show that
\begin{equation}
\frac{c}{2} \, F^{R}_{D}(z, v) \geq F^{C}_{D}(z, v)
\end{equation}
for $z \in D$ and $v$ a tangent vector at $z$.

\medskip

\no Assuming that we have shown the above two inequalities, the proof of the theorem can be completed as follows. Since $D$ is bounded and convex, Lempert's theorem shows that the Carath\'{e}odory and the Kobayashi metrics are equal and hence by (6.3) and (6.4) they are both smooth hermitian metrics. Also boundedness and convexity of $D$ implies that it is complete hyperbolic with respect to the Kobayashi metric. Thus it follows from the main theorem of \cite{S} that $D$ is biholomorphic to the unit ball in $\mathbb{C}^{n}$. It remains to show that $ds^{2}_{z}$ is proportional to the Bergman metric on $D$. Let $f : D \ra \mathbb{B}^{n}$ be a biholomorphism. Then for any $z \in D$ and $v \in \mathbb{C}^{n}$,
\[
\frac{c}{2}F^{R}_{D}(z,v) = F^{K}_{D}(z,v) = F^{K}_{\mathbb{B}^{n}} \big( f(z), df(z)v \big) = F^{B}_{\mathbb{B}^{n}} \big( f(z), df(z)v \big) = F^{B}_{D}(z,v).
\]
where the superscript $B$ on $F_{D}$ and $F_{\mathbb{B}^{n}}$ denotes the Bergman metric on $D$ and $\mathbb{B}^{n}$ respectively. This proves the claim.

\medskip

We now establish the two inequalities in (6.3) and (6.4). To prove (6.3), let $f: \Delta \ra D$ be a holomorphic map. Since the Gaussian curvature of the Poincar\'{e} metric is $-4$, the Ahlfors-Schwarz lemma shows that
\[
f^{*} \big( ds^{2}_{z} \big) \leq \frac{4}{c^{2}} \, ds^{2}_{\Delta}.
\]
This implies that for any $z \in \Delta$ and $v \in \mathbb{C}$,
\[
\frac{c}{2} \, F^{R}_{D} \big( f(z), f^{\prime}(z)v \big) \leq |v|_{ds^{2}_{\Delta}}.
\]
Since the Kobayashi metric is the largest of all metrics that are distance decreasing for holomorphic maps $f: \Delta \ra D$, it follows that
\[
F^{K}_{D} \geq \frac{c}{2} \, F^{R}_{D}
\]
which is (6.3).
\medskip

To prove (6.4), let $f:D \ra \Delta$ be a holomorphic mapping.  By lemma 6.1 we have
\[
f^{*} \big( ds^{2}_{\Delta} \big) \leq \frac{c^{2}}{4} \, ds^{2}_{z}.
\]
This implies that for any $z \in D$ and $v \in \mathbb{C}^{n}$
\[
\big| df(z) v \big| _{ds^{2}_{\Delta}} \leq \frac{c}{2} \, F^{R}_{D}(z,v).
\]
Since the Carath\'{e}odory metric is the smallest of all metrics that are distance decreasing for holomorphic maps $f : D \ra \Delta$, it follows that
\[
\frac{c}{2} \, F^{R}_{D} \geq F^{C}_{D}
\]
which is (6.4).

\medskip

This completes the proof of the theorem.

\section{Stability of the $\Lambda$-metric under $C^{2}$ perturbation and Concluding Remarks}

\no In this section we study the interior stability of the $\Lambda$-metric under perturbations of a given domain. Let $D$ be a domain in $\mathbb{C}^{n}$ with $C^{2}$-smooth boundary. By a $C^{2}$ perturbation of $D$ we mean a sequence $\{D_{j}\}$ of domains in $\mathbb{C}^{n}$ which converges in the $C^{2}$ topology to $D$. Let $G$ be the Green function for $D$ and $\Lambda$ the associated Robin function. Likewise let $G_{j}$ be the Green function for $D_{j}$ and $\Lambda_{j}$ the corresponding Robin function. 
\begin{prop}
For every $p \in D$, $G_{j}(z, p)$ converges uniformly on compact subsets of $D$ to $G(z, p)$.
\end{prop}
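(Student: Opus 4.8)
The plan is to reduce everything to the harmonic parts of the Green functions and then run the same three-step scheme — uniform bounds, normal-family extraction, and identification of the limit via its boundary behaviour — that was used in Section 3. Fix $p \in D$. Since $D_j \to D$ in the $C^2$ topology, the point $p$ lies at a uniform positive distance from $\partial D_j$ for all large $j$, so $p \in D_j$ and $G_j(z, p)$ is defined. Writing
\[
G_j(z, p) = \vert z - p \vert^{-2n + 2} + H^j_p(z), \qquad G(z, p) = \vert z - p \vert^{-2n + 2} + H_p(z),
\]
the singular part is common and independent of $j$, so it suffices to prove that the harmonic functions $H^j_p$ converge to $H_p$ uniformly on compact subsets of $D$. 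This is also the correct reading of the statement, since $G_j(z, p) - G(z, p) = H^j_p(z) - H_p(z)$ carries no pole.

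First I would establish a uniform bound. With $d_0 = \mathrm{dist}(p, \partial D) > 0$ we have $\mathrm{dist}(p, \partial D_j) \ge d_0/2$ for large $j$, so the boundary values of $H^j_p$, namely $-\vert z - p \vert^{-2n + 2}$ on $\partial D_j$, are bounded in modulus by $(d_0/2)^{-2n + 2} =: C$ uniformly in $j$. As $H^j_p$ is harmonic on $D_j$ and continuous on $\overline{D_j}$, the maximum principle gives $\vert H^j_p \vert \le C$ throughout $D_j$. Interior gradient estimates for harmonic functions then make $\{H^j_p\}$ equicontinuous on any compact $K \subset D$ (which is contained in $D_j$ for large $j$ by the Hausdorff convergence of the domains), and a diagonal argument over an exhaustion of $D$ produces a subsequence $H^{j_k}_p$ converging uniformly on compact subsets of $D$ to a harmonic function $\widetilde H$.

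Next I would identify the limit. Set $\widetilde G(z, p) = \vert z - p \vert^{-2n + 2} + \widetilde H(z)$, so that $\widetilde G$ is harmonic on $D \setminus \{p\}$ and $\widetilde G - \vert z - p \vert^{-2n + 2}$ is harmonic near $p$. The only remaining property is that $\widetilde G(z, p) \to 0$ as $z \to \partial D$, and this is where the $C^2$ convergence of the domains enters decisively. Since $D_j \to D$ in $C^2$, the boundaries $\partial D_j$ satisfy a uniform exterior ball condition, so the barrier of Lemma 3.4 (transplanted by affine maps as in the remark following it) applies with constants independent of $j$; repeating the argument of Proposition 3.5 yields $G_j(z, p) \le C\, \delta_j(z)\, \vert z - p \vert^{-2n + 1}$ for $z$ near $\partial D_j$, where $\delta_j(z) = \mathrm{dist}(z, \partial D_j)$ and $C$ is uniform in $j$. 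Letting $k \to \infty$ and using $\delta_{j_k}(z) \to \delta(z) = \mathrm{dist}(z, \partial D)$ gives $\widetilde G(z, p) \le C\, \delta(z)\, \vert z - p \vert^{-2n + 1}$ near $\partial D$, hence $\widetilde G(z, p) \to 0$ at $\partial D$. Thus $\widetilde G$ satisfies every defining property of the Green function for $D$ with pole at $p$, and uniqueness forces $\widetilde G = G$, i.e. $\widetilde H = H_p$. Since every subsequence of $\{H^j_p\}$ admits a further subsequence with the same limit $H_p$, the full sequence converges, which proves the proposition.

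The main obstacle is the last step, namely producing the uniform boundary decay $\widetilde G(z, p) \to 0$ on $\partial D$. A priori the limit $\widetilde H$ of the harmonic parts is merely \emph{some} bounded harmonic function on $D$, and nothing about its boundary values is automatic; the normal-family argument by itself cannot distinguish $G$ from any other harmonic perturbation of the singularity. What saves the day is that the $C^2$ convergence $D_j \to D$ furnishes a uniform exterior ball radius, precisely what is required to run the Krantz-type barrier estimate of Lemma 3.4 and Proposition 3.5 with constants that do not degenerate as $j \to \infty$. Carrying out this barrier argument uniformly in $j$ — rather than for a single fixed domain — is the crux of the proof.
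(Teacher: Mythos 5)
Your proof is correct, and it follows the same three-step scheme as the paper's own argument: uniform bounds and normal families to extract a subsequential limit, the barrier machinery of Lemma 3.4/Proposition 3.5 (with constants uniform in $j$, thanks to the $C^2$ convergence of the domains) to show the limit vanishes on $\partial D$ and hence equals $G(z,p)$ by uniqueness, and the sub-subsequence trick to upgrade to convergence of the full sequence. The one organizational difference is the decomposition: you peel off the common singularity at the outset and run the compactness argument on the harmonic parts $H^j_p = G_j(\cdot,p) - \vert \cdot - p\vert^{-2n+2}$, bounded via their boundary values and the maximum principle, so that the convergence takes place on compact subsets of all of $D$; the paper instead works with $G_j(\cdot,p)$ directly, using $0 \le G_j \le \vert z-p\vert^{-2n+2}$ to get normality on compacts of $D \setminus \{p\}$, and then handles the pole separately by the maximum-principle argument from the proof of Proposition 3.3 (uniform convergence of $G_j - \vert z-p\vert^{-2n+2}$ on a small sphere around $p$ propagates to the ball). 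Your variant buys a slightly cleaner statement --- it makes literal sense of ``uniform convergence on compact subsets of $D$,'' since $G_j - G$ carries no pole, and it dispenses with the extra step near $p$ --- while the paper's version recycles Proposition 3.3 verbatim. You also correctly isolate the genuine content of the identification step, namely that the exterior-ball radius and hence the barrier constants can be taken independent of $j$, which is exactly what the paper's phrase ``as in proposition 3.5 it is possible to show'' is relying on.
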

\begin{proof}
Let $p \in D$. Then $p$ is contained in $D_{j}$ for large $j$ so that the function $G_{j}(z,p)$ is well defined on $D_{j}$ for all such $j$. The function $G_{j}(z,p)$ is harmonic on $D_{j}\setminus\{p\}$ and satisfies
\begin{align}\label{G_j-bounded1}
0 \leq G_{j}(z,p) \leq \frac{1}{|z-p|^{2n-2}}
\end{align}
there by the maximum principle. If $K \subset D$ is compact then both $p$ and $K$ are contained in $D_{j}$ for large $j$ and by (\ref{G_j-bounded1}), $\{G_{j}(z,p)\}$ is uniformly bounded on $K$. Hence there exists a subsequence which converges uniformly on compact subsets of $D \setminus \{p\}$.

\medskip

Thus to prove that $G_{j}(z, p)$ converges uniformly on compacts of $D \setminus \{p\}$ to $G(z, p)$ it suffices to show that any convergent subsequence of $\{G_{j}(z, p)\}$ has the unique limit $G(z, p)$. Let $\{G_{j_{\nu}}(z,p)\}_{j_{\nu} \in \mathbb{N}}$ be a subsequence which converges uniformly on compact subsets of $D \setminus \{p\}$ to a function, say $\tilde{G}(z, p)$. Then $\tilde{G}(z, p)$ is a harmonic function on $D \setminus \{p\}$ and it follows from the proof of proposition 3.3 that the function  $\tilde{G}(z, p) - |z - p|^{-2n+2}$ is harmonic near $p$. To show that $\tilde{G}(z, p) \rightarrow 0$ as $z \rightarrow \partial D$, note that
\[ 0 \leq \tilde{G}(z,p) \leq \frac{1}{|z-p|^{2n-2}} \]
for all $z \in D \setminus \{p\}$ since the same holds for $G_{j}(z,p)$. For a given $\epsilon > 0$ it is therefore possible to choose a large ball $B(p, R)$ such that $\tilde{G}(z, p) < \epsilon$ for $z \in D \setminus B(p, R)$. On the other hand as in proposition 3.5 it is possible to show that there is a finite cover of $\partial D \cap \overline{B(p, R)}$ by open balls in which the estimate
\[
\tilde{G}(z, p) \lesssim \delta(z) \, \vert z - p \vert^{-2n + 1}
\]
holds. Combining these observations it follows that if $z$ is close enough to $\partial D$ then $\tilde{G}(z, p) < \epsilon$ and this proves that $\tilde{G}(z,p)$ is the Green function for $D$ with pole at $p$.
\end{proof}

\medskip

\no For multi-indices $A = (\alpha_{1}, \ldots, \alpha_{n})$ and $B = (\beta_{1}, \ldots, \beta_{n}) \in \mathbb{N}^{n}$, $D^{A}$ and $D^{B}$ will have the same meaning as in theorem $1.1$ and $D^{A \overline{B}} = D^{A} D^{\overline{B}}$.

\begin{prop}
For multi-indices $A, B \in \mathbb{N}^{n}$
\[
D^{A \overline{B}} \Lambda_{j} \rightarrow D^{A \overline{B}} \Lambda
\]
uniformly on compact subsets of $D$.
\end{prop}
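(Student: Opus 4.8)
The plan is to mirror the proof of theorem 1.1: move the derivatives onto a smooth kernel in the integral representation of lemma 2.2 and reduce everything to the convergence of the regular parts $H_j(z,w) = G_j(z,w) - \vert z-w\vert^{-2n+2}$. Fix a compact $K \subset D$; since uniform convergence on a neighbourhood of each point of $K$ yields uniform convergence on $K$ after passing to a finite subcover, fix $p_0 \in K$ and a ball $B = B(p_0,r)$ with $\ov B \subset D$. Because $D_j \ra D$ in the $C^2$ topology we have $\ov B \subset D_j$ for all large $j$, so lemma 2.2 gives
\[
\La_j(p) = \frac{1}{(\sig_{2n} r)^2} \iint_{\pa B \times \pa B} H_j(z,w)\, \frac{\big(r^2 - \vert z-p\vert^2\big)\big(r^2 - \vert w-p\vert^2\big)}{\vert z-p\vert^{2n}\vert w-p\vert^{2n}}\; dS_z\, dS_w
\]
for $p \in B$, with the same identity for $\La$ upon replacing $H_j$ by $H$. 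For $p$ in the smaller ball $B(p_0,r/2)$ one has $\vert z-p\vert, \vert w-p\vert \ge r/2$ on $\pa B \times \pa B$, so the kernel is smooth in $p$ with all derivatives $D^{A\ov B}_p$ (taken in the pole variable $p$) bounded uniformly in $(p,z,w)$. Differentiating under the integral sign then gives
\[
\big\vert D^{A\ov B}\La_j(p) - D^{A\ov B}\La(p)\big\vert \lesssim \sup_{\pa B \times \pa B} \big\vert H_j - H\big\vert
\]
uniformly for $p \in B(p_0,r/2)$, so the whole matter reduces to showing $H_j \ra H$ uniformly on $\pa B \times \pa B$.

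The key point, and the reason proposition 7.1 does not immediately apply, is that $\pa B \times \pa B$ meets the diagonal $\{z=w\}$, where each $G_j$ blows up even though $H_j$ stays regular. What saves the day is that $\{H_j\}$ is a normal family there. Indeed, from the harmonic measure representation recalled in proposition 3.1,
\[
H_j(z,w) = -\int_{\pa D_j} \vert t-w\vert^{-2n+2}\; d\mu^{D_j}_z(t),
\]
so $H_j \le 0$; and since $\ov B \subset\subset D$ and $\pa D_j \ra \pa D$, there is a $c_0 > 0$ with ${\rm dist}(w,\pa D_j) \ge c_0$ for all $w \in \ov B$ and $j$ large, whence $\vert t-w\vert^{-2n+2} \le c_0^{-2n+2}$ and therefore $-C \le H_j \le 0$ on $\ov B \times \ov B$ with $C$ independent of $j$. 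Moreover $H_j(z,w)$ is harmonic in $z$ throughout $D_j$ for each fixed $w$ (the singularity having been subtracted) and, being symmetric, harmonic in $w$ as well. Interior gradient estimates for harmonic functions applied in each variable separately then bound $\pa H_j/\pa z_\al$ and $\pa H_j/\pa w_\be$ uniformly on $\ov B \times \ov B$, so $\{H_j\}$ is equicontinuous there; by Arzel\`a--Ascoli every subsequence has a further subsequence converging uniformly on $\ov B \times \ov B$.

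It remains to identify the limit, which I expect to be the only genuinely delicate step. If $H_{j_\nu} \ra \ti H$ uniformly on $\ov B \times \ov B$, then for any point $(z,w)$ with $z \ne w$ proposition 7.1 (with pole $w$) gives $G_{j_\nu}(z,w) \ra G(z,w)$, hence $\ti H(z,w) = H(z,w)$. Since the off-diagonal set is dense in $\ov B \times \ov B$ and both $\ti H$ and $H$ are continuous, $\ti H = H$ everywhere; thus every subsequential limit is $H$ and the full sequence satisfies $H_j \ra H$ uniformly on $\ov B \times \ov B$, in particular on $\pa B \times \pa B$. Combined with the reduction of the first paragraph and a finite subcover of $K$, this proves the proposition. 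The whole difficulty is thus concentrated on the diagonal, and it is resolved by the observation that the regularised functions $H_j$ remain uniformly bounded and separately harmonic there; alternatively one could re-run the horizontal plaque argument of proposition 3.6 in this perturbative setting to propagate the off-diagonal convergence across the diagonal directly.
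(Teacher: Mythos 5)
Your proposal is correct, and its first half --- localizing to a ball, invoking lemma 2.2, and differentiating under the integral sign to reduce the whole statement to uniform convergence of $H_j$ on $\pa B \times \pa B$ --- is exactly what the paper does. Where you genuinely diverge is in how the diagonal is handled. The paper disposes of it with the phrase ``as in proposition 3.6'', i.e.\ it re-runs the machinery of section 3 in the perturbed setting: off-diagonal uniform convergence via the boundary decay estimate of proposition 3.5 and the maximum principle, followed by the horizontal-plaque argument to propagate uniform convergence across the diagonal. You instead run a normal-families argument: the harmonic-measure representation gives $-C \le H_j \le 0$ uniformly near $\ov B \times \ov B$, separate harmonicity in $z$ and $w$ plus interior gradient estimates give equicontinuity, Arzel\`a--Ascoli gives subsequential uniform limits, and proposition 7.1 identifies every such limit with $H$ pointwise off the diagonal, hence everywhere by continuity and density of the off-diagonal set. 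This is more elementary and arguably cleaner: it needs only the already-proven proposition 7.1 for identification and avoids redoing the plaque construction and the quantitative boundary estimates; what the paper's route buys is a single mechanism shared with section 3 (whose steps proposition 7.1's proof repeats anyway), at the cost of more machinery. One small repair to your write-up: to get the gradient bounds on $\ov B \times \ov B$ you must first establish the uniform bound $-C \le H_j \le 0$ on a slightly larger ball $B'$ with $B \subset\subset B' \subset\subset D$ --- your distance argument works verbatim there --- since interior estimates at points of $\ov B$ need room, whereas as written you claim the bound only on $\ov B$ itself.
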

\begin{proof}
Fix a compactly contained ball $U = B(0, r) \subset D$ and note that $U \subset D_{j}$ for all large $j$. For each such $j$ and $p \in U$, lemma 2.2 shows that
\[
\Lambda_{j}(p) = \frac{1}{(\sigma_{2n} r)^{2}} \iint_{\partial U \times \partial U} H_{j}(z, w) \frac{\big(r^{2} - |z-p|^{2} \big) \big( r^{2} - |w-p|^{2} \big) }{|z-p|^{2n}|w-p|^{2n}} \, dS_{z}\, dS_{w}.
\]
where $H_{j}(z, w) = G_{j}(z, w) - |z - w|^{-2n+2}$. Differentiating with respect to $p$ under the integral sign we obtain
\[
D^{A \overline{B}}\Lambda_{j}(p) = \frac{1}{(\sigma_{2n} r)^{2}} \iint_{\partial U \times \partial U} H_{j}(z, w) \, D^{A \overline{B}} \, \frac{\big(r^{2} - |z-p|^{2} \big) \big( r^{2} - |w-p|^{2} \big) }{|z-p|^{2n}|w-p|^{2n}} \, dS_{z}\, dS_{w}.
\]
Now as in proposition 3.6, $H_{j}(z, w)$ converges uniformly on compact subsets of $D \times D$ to the function $H_{\infty}(z, w) = G(z, w) - |z - w|^{-2n+2}$. Therefore the integral above converges to
\[
\frac{1}{(\sigma_{2n} r)^{2}} \iint_{\partial U \times \partial U} H_{\infty}(z, w) \, D^{A \overline{B}} \, \frac{\big( r^{2} - |z-p|^{2} \big) \big( r^{2} - |w-p|^{2} \big) }{|z-p|^{2n}|w-p|^{2n}} \, dS_{z}\, dS_{w} = D^{A \overline{B}} \Lambda(p).
\]
Moreover the limit is uniform in $p \in U$ which completes the proof.
\end{proof}

\no {\it Proof of Theorem 1.8:} From proposition 7.2 the components of the $\Lambda$-metric for $D_{j}$ and their derivatives converge uniformly on compact subsets of $D$ to the corresponding components of the  $\Lambda$-metric for $D$ and their derivatives. This proves the theorem.

\medskip

We conclude this article with a few open problems involving the $\Lambda$-metric. First of all given a bounded strongly pseudoconvex domain $D$ with smooth boundary, we have shown in theorem 1.4 that the $\Lambda$-metric is comparable to the Kobayashi metric and hence is complete on $D$. The completeness of the  $\Lambda$-metric on such a domain $D$ was first proved by Yamaguchi and Levenberg in \cite{LY} by showing that for every curve $\gamma : [0, 1) \ra D$ that approaches $\partial D$ as $t \ra 1^{-}$ one has
\begin{equation}
\int_{\gamma} \, ds^{2}_{z} = + \infty.
\end{equation}
But for an arbitrary smoothly bounded pseudoconvex domain they were able to prove (7.2) only for those curves $\gamma$ which approach the boundary non-tangentially (cf. theorem 6.1 in \cite{LY}) or which have finite Euclidean length (cf. corollary 9.1 in \cite{LY}). Thus it is unclear whether the $\Lambda$-metric is complete for an arbitrary smoothly bounded pseudoconvex domain in $\mathbb{C}^{n}$.

\medskip

It is unknown if the $\Lambda$-metric is invariant under biholomorphisms. That it is so under affine maps of the form $z \mapsto a U z + b$ where $a \in \mathbb{C}$, $a \neq 0$, $b \in \mathbb{C}^{n}$ and $U$ is a complex unitary matrix has been checked in lemma 5.1.

\medskip

Let $D$ be a smoothly bounded strongly pseudoconvex domain in $\mathbb{C}^{n}$ and let $\{z_{j}\}$ be a sequence of points in $D$ which converges to a boundary point $z_{0} \in \partial D$. Further assume that the points $z_{j}$ lie on the inner normal to $\partial D$ at $z_{0}$. Then we have shown in theorem 1.6 that the holomorphic sectional curvature of the $\Lambda$-metric at $z_{j}$ along normal directions converges to the constant $-1/(n-1)$. But the behaviour of the holomorphic sectional curvature along normal directions is unclear if we remove the above restriction on $z_{j}$. Also the difficulty in computing the holomorphic sectional curvature along tangential directions has been discussed in remark 5.9.

\medskip

Let $D$ be a smoothly bounded strongly pseudoconvex domain in $\mathbb{C}^{n}$. Let $\tilde{\Delta}$ be the Laplace-Beltrami operator associated to the $\Lambda$-metric. It will be interesting to compute the boundary asymptotics for $\tilde{\Delta}$.

\medskip

Let $D$ be a smoothly bounded pseudoconvex domain in $\mathbb{C}^{n}$. Let $K_{D}(z)$ be the Bergman kernel associated to $D$ and let $b_{\alpha \ov \beta}(z)$ be the components of the Bergman metric, i.e.,
\[
b_{\alpha \ov \beta}(z)  =  \frac{\partial^{2} \log K_{D}}{\partial z_{\alpha} \partial \ov z_{\beta}}(z).
\]
Let $B_{D}(z) = \big( b_{\alpha \ov \beta}(z) \big)$. Then the Bergman canonical invariant is defined by
\[
J_{D}^{B}(z) = \frac{\det B_{D}(z)}{K_{D}(z)}
\]
and the Ricci curvature is defined by
\[
R_{D}^{B} = \sum_{\alpha, \beta = 1}^{n} R_{\alpha \ov \beta} \, dz_{\alpha} \, d \ov{z}_{\beta} \phantom{mm} 
\text{where} \phantom{mm}
R_{\alpha \ov \beta} = - \frac{\partial^{2} \log \det G_{D}}{\partial z_{\alpha} \partial \ov{z}_{\beta}}.
\]
The boundary behavior of these two objects on $h$-extensible domains were studied in \cite{KY}. Similar objects $J_{D}^{R}$ and $R_{D}^{R}$ can be defined 
for the $\Lambda$-metric. Note that $J_{D}^{B}$ and $R_{D}^{B}$ are invariant under 
biholomorphisms of $D$ whereas the behavior of $J_{D}^{R}$ and $R_{D}^{R}$ under such maps is not clear. Nevertheless, it will be interesting
to find the boundary behavior of $J_{D}^{R}$ and $R_{D}^{R}$.

\medskip

Finally, let $D$ be a smoothly bounded strongly pseudoconvex domain in $\mathbb{C}^{n}$. For $z \in D$ and a unit vector $v \in \mathbb{C}^{n}$ denote by 
$X(t, z, v)$ the geodesic in the $\Lambda$-metric starting at $t = 0$ at the point $z$ with initial velocity $v$. 
A general question is the following: given $z$ close to $\partial D$ and unit vector $v$, does the geodesic $X(t, z, v)$ hit $\partial D$ 
at a unique point provided that $X(t, z, v)$ does not remain in a fixed compact subset of $D$ for all $t \geq 0$? For the Bergman metric this was done by Fefferman 
in \cite{F}.

\end{document}